\let\c@equation\c@subsection
\let\c@subsubsection\c@subsection
\numberwithin{equation}{section}
\theoremstyle{plain}
\newtheorem{theorem}[equation]{Theorem}
\newtheorem{lemma}[equation]{Lemma}
\newtheorem{claim}[equation]{Claim}
\theoremstyle{definition}
\newtheorem{definition}[equation]{Definition}
\theoremstyle{remark}
\newtheorem{remark}[equation]{Remark}
\renewcommand{\Re}{\operatorname{Re}}
\renewcommand{\Im}{\operatorname{Im}}
\newcommand{\rad}{\operatorname{rad}}
\newcommand{\Res}{\operatorname{Res}}
\renewcommand{\mod}{\operatorname{mod}\ }
\newcommand{\cA}{\mathcal{A}}
\newcommand{\cB}{\mathcal{B}}
\newcommand{\cD}{\mathcal{D}}
\newcommand{\cE}{\mathcal{E}}
\newcommand{\cF}{\mathcal{F}}
\newcommand{\cH}{\mathcal{H}}
\newcommand{\cJ}{\mathcal{J}}
\newcommand{\cK}{\mathcal{K}}
\newcommand{\cM}{\mathcal{M}}
\newcommand{\cN}{\mathcal{N}}
\newcommand{\cS}{\mathcal{S}}
\newcommand{\cV}{\mathcal{V}}
\newcommand{\cW}{\mathcal{W}}
\newcommand{\sV}{\mathscr{V}}
\newcommand*{\bbC}{\ensuremath{\mathbb{C}}}
\newcommand*{\bbR}{\ensuremath{\mathbb{R}}}
\newcommand*{\bbN}{\ensuremath{\mathbb{N}}}
\newcommand{\ve}{\varepsilon}
\begin{document}

\title[Super-positivity of a family of L-functions]
        {Super-positivity of a family of L-functions}
\author{Dorian Goldfeld and Bingrong Huang}
\address{Department of Mathematics \\ Columbia University \\ New York \\NY 10027 \\USA}
\email{goldfeld@columbia.edu}
\address{School of Mathematics \\ Shandong University \\ Jinan \\Shandong 250100 \\China}
\email{brhuang@mail.sdu.edu.cn}
\thanks{Dorian Goldfeld is partially supported by NSA Grant H98230-16-1-0009.
 Bingrong Huang is supported in part by\break
\phantom{xx} NSFC grant 11531008
    and IRT\_16R43 from the Ministry of Education, China. Bingrong Huang also thanks the\break
\phantom{xx} China Scholarship Council for supporting his studies at Columbia University.}

\begin{abstract}
  Zhiwei Yun and Wei Zhang introduced the notion of ``super-positivity of self dual L-functions'' which specifies that all derivatives of the completed L-function (including Gamma factors and power of the conductor) at the central value $s = 1/2$ should be non-negative. They proved that the Riemann hypothesis implies super-positivity for self dual cuspidal automorphic L-functions on $GL(n)$. Super-positivity of the Riemann zeta function was established by P\' olya in 1927 and since then many other cases have been found by numerical computation.  In this paper we prove, for the first time, that there are infinitely many  L-functions associated to modular forms for $SL(2, \mathbb{Z})$ each of which has the super-positivity property. Our proof also establishes that all derivatives of the completed L-function at  any real point $\sigma > 1/2$ must be positive.
\end{abstract}

 \subjclass{ 11M, 11F11 }
 \keywords{L-functions, mollification, moments, real zeros, super-positivity, zero-density}
\maketitle


\section{Introduction} \label{sec:intr}


Let $F$ be a number field and let $\mathbb A_F$ be the ad\'ele ring of $F$ which is the restricted product $\prod_v F_v$ over the completions of $F$.  A cuspidal automorphic representation $\pi$ of $GL(n, \mathbb A_F)$ can be written as a tensor product  $\pi = \bigotimes \pi_v$ of local representations.  Then $\pi$
 has a Godement--Jacquet L-function
$$L(s, \pi) = \prod_v L(s, \pi_v),$$
where
$$L(s, \pi_v) = \begin{cases} \prod\limits_{j=1}^n \left( 1 - \frac{\alpha_j(v)}{N(v)^s}\right)^{-1}, & \text{if} \; v \; \text{is non-archimedean,}\\
 \prod\limits_{j=1}^n \Gamma_v\big(s - \mu_j(v)\big),&  \text{if} \; v \; \text{is archimedean,}\\
\end{cases}$$
with $\alpha_j(v), \; \mu_j(v) \in\mathbb C$ for $j = 1,2,\ldots,n,$ and
$$\Gamma_v(s) = \begin{cases}
\pi^{-\frac{s}{2}}\Gamma\left( \frac{s}{2}  \right), & \text{if} \; F_v \sim \mathbb R,\\
(2\pi)^{-s} \Gamma(s), & \text{if}\; F_v \sim \mathbb C.
\end{cases}
$$

Let $\widetilde\pi$ denote the
 contragredient representation. It is well known that  $L(s, \pi)$
 is an entire function of  order 1 and satisfies
a functional equation of the form (see \cite{GJ, goldfeld-hundley})
$$L(s,\pi) = \epsilon(s,\pi) L(1-s, \widetilde \pi),$$
with $$\epsilon(s,\pi) = \epsilon(\pi)  N_\pi^{1/2-s},$$
where
$N_\pi \geq 1$ is the conductor of $\pi$ and $\epsilon(\pi)$ is the root number satisfying $|\epsilon(\pi)|=1.$ If $\pi = \widetilde\pi$, i.e., $\pi$ is self dual, then
$\epsilon(\pi) = \pm 1.$

   Zhiwei Yun and Wei Zhang \cite{Y-Z} introduced the notion of super-positivity for self dual cuspidal automorphic representations $\pi$ of $GL(n, \mathbb A_F)$ which specifies that all derivatives   of the completed L-function,
  $$\Lambda(s, \pi) := N_\pi^{\frac{s-1/2}{2}} L(s,\pi) = \pm \Lambda(1-s, \pi),$$   at $s = 1/2$ should be greater or equal to zero. They proved that super-positivity holds for self dual cuspidal automorphic L-functions (over any global field) which satisfy the Riemann hypothesis. In the case that $F$ is a function field, the Riemann hypothesis is known
   by the theorem of Deligne on Weil's conjecture, and of Drinfeld and L. Lafforgue on the global Langlands correspondence, so it is now known that super-positivity holds for cuspidal automorphic automorphic representations of
 $GL(n, \mathbb A_F)$ where $F$ is a function field.

Super-positivity was  established   for the example of the  Riemann zeta function  by P\'olya in 1927 (see \cite{polya1927algebraisch} and \cite{csordas1986riemann}). We would like to thank Peter Sarnak for informing us that super-positivity  is also known in many other cases (including quadratic Dirichlet L-functions and L-functions of $GL(2)$ modular forms) when the L-function is {\it ``positive definite''} as defined by Sarnak \cite{Sarnak}. It is not hard to check numerically if an L-function is positive definite or not. For example, in the case of an L-function  associated to a holomorphic modular form $f$  for $SL(2,\mathbb Z)$ (with Fourier coefficients $a_n$) it is enough to check
 if $f(iy) = \sum\limits_{n=1}^\infty a_n e^{-2\pi ny}$ is positive for $y\ge 1.$

 \vskip 3pt
 It was shown by Jung \cite{Jung} that almost all L-functions in any reasonable family will not be positive definite. It is not known if there are infinitely many self dual automorphic L-functions which are positive definite.

  It seems to be infeasible to prove super-positivity for all Dirichlet L-functions  at this time  since it would  follow that there are no Siegel zeros (real zeros near $s=1$) for Dirichlet L-functions, which is known to be a notoriously difficult problem. Similarly, proving super-positivity for all cuspidal automorphic L-functions on  $GL(n, \mathbb A_F)$ (with $F$ = a number field and $n > 1$) also seems hopeless at present.

 \vskip 8pt
 The main aim of this paper is to prove that there are infinitely many examples of cuspidal automorphic L-functions for $GL(2, \mathbb A_{\mathbb Q})$ which have the super-positivity property. Following \cite{Y-Z} we will actually prove our results for the following expanded definition of super-positivity.

 \begin{definition} {\bf (Super-positivity)}\label{def:superpositivity}
Fix a  number field $F$. Let $\pi$ denote a self dual cuspidal automorphic representation of $GL(n, \mathbb A_F)$ with conductor $N_\pi.$   We say $\pi$ has the super-positivity property if
\begin{align*} &{\bf (1)} \;\Lambda^{(k)}(1/2,\pi) = \left(\frac{d}{ds}\right)^k \Lambda(s, \pi)\bigg|_{s=1/2}  \ge \; 0, \quad\big (\text{for all} \;k = 0,1,2,\ldots\big),\\
&{\bf (2)} \;\Lambda^{(k)}(\sigma,\pi) = \left(\frac{d}{ds}\right)^k \Lambda(s, \pi)\bigg|_{s=\sigma} > \; 0, \quad\big (\text{for $\sigma>1/2$ and all} \;k = 0,1,2,\ldots\big),\\
&{\bf (3)} \; \Lambda^{(k_0)}(1/2,\pi) \ne 0 \; \implies\;  \Lambda^{(k_0+2i)}(1/2,\pi) \ne 0, \quad \big(\text{for some $k_0 \ge 0$ and all}\; i = 0,1,2,\ldots\big).
\end{align*}
\end{definition}

\vskip 8pt
We now state our main results. Proofs will follow in subsequent sections.

\begin{theorem}\label{thm:superpositivity1}
Let $\pi$ denote a self dual  cuspidal automorphic representation of $GL(n, \mathbb A)$. For $s\in\mathbb C$ let $\Lambda(s,\pi)$ be the completed L-function of $\pi$   with functional equation
$$\Lambda(s,\pi) = \pm \Lambda(1-s,\pi).$$
Assume that
$\Lambda(s,\pi) \ne 0$ for $s = \sigma + i t$ where
$1/2 < \sigma < 1, \; |t| \leq \sigma - 1/2.$
Then $\pi$ has the super-positivity property as in definition (\ref{def:superpositivity}).

\end{theorem}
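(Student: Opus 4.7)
The plan is to exploit the Hadamard factorization of $\Lambda(s,\pi)$ after regrouping its zeros by the two symmetries $s\mapsto 1-s$ (functional equation) and $s\mapsto \bar s$ (self-duality, which makes the Dirichlet coefficients of $L(s,\pi)$ real). Setting $w=s-1/2$ and $\Xi(w)=\Lambda(1/2+w,\pi)$, the zeros of $\Xi$ become invariant under $w\mapsto -w$ and $w\mapsto \bar w$, so they partition into (i) a zero at $w=0$ of multiplicity $m\geq 0$, (ii) pairs $\{\pm i\gamma\}$ on the imaginary axis (zeros of $\Lambda$ on the critical line), and (iii) quadruples $\{\pm\alpha\pm i\beta\}$ with $\alpha,\beta>0$ (zeros of $\Lambda$ off the critical line). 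The hypothesis of the theorem translates precisely into the strict inequality $\beta>\alpha$ for every such quadruple.

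In the order-$1$ Hadamard product, pairing $w_\rho$ with $-w_\rho$ cancels their convergence factors $e^{\pm w/w_\rho}$ and leaves $1-w^2/w_\rho^2$; applying $\Xi(-w)=\pm\Xi(w)$ to the rearranged product forces the remaining linear exponential $e^{Bw}$ to vanish. The result is a factorization
$$\Xi(w) \;=\; C\,w^m\,E(u), \qquad u:=w^2, \qquad E(u)=\sum_{n\geq 0}e_n\,u^n,$$
where $C>0$ (because $\Xi(w)\to+\infty$ as $w\to+\infty$ by the asymptotics of the archimedean Gamma factors and of $L(\sigma,\pi)$), and $E(u)$ is a convergent infinite product of polynomial factors in $u$: each imaginary-axis pair contributes $1+u/\gamma^2$, and each quadruple contributes
$$1+\frac{2(\beta^2-\alpha^2)}{(\alpha^2+\beta^2)^2}\,u+\frac{1}{(\alpha^2+\beta^2)^2}\,u^2.$$
The hypothesis $\beta>\alpha$ is precisely what keeps the linear coefficient of every quadruple factor non-negative (indeed strictly positive), so every $e_n\geq 0$. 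Since each factor has a strictly positive linear coefficient (either $1/\gamma^2$ or $2(\beta^2-\alpha^2)/(\alpha^2+\beta^2)^2$) and the product has infinitely many factors (because $\Lambda(s,\pi)$ has infinitely many zeros), choosing any $n$ factors and multiplying their linear contributions already gives a strictly positive summand in $e_n$; hence $e_n>0$ for every $n\geq 0$.

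With $\Xi(w)=Cw^m E(w^2)$ and all $e_n>0$ in hand, the three parts of super-positivity follow by direct differentiation. Part (1): $\Xi^{(k)}(0)$ equals $C(m+2j)!\,e_j$ when $k=m+2j$ and vanishes otherwise, hence is always non-negative. Part (2): for $w_0=\sigma-1/2>0$, Leibniz expresses $\Xi^{(k)}(w_0)$ as a sum of non-negative quantities that includes the strictly positive contribution $C w_0^m\, g^{(k)}(w_0)$, where $g(w):=E(w^2)$ and
$$g^{(k)}(w_0) \;=\; \sum_{2n\geq k} e_n\,(2n)(2n-1)\cdots(2n-k+1)\,w_0^{\,2n-k} \;>\; 0.$$
Part (3): if $\Xi^{(k_0)}(0)\ne 0$ then $k_0=m+2j$ with $e_j>0$, and therefore $\Xi^{(k_0+2i)}(0)=C(m+2j+2i)!\,e_{j+i}>0$ for every $i\geq 0$.

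The main technical obstacle I anticipate is the bookkeeping in the second paragraph: rigorously rearranging the order-$1$ Hadamard product so that the convergence factors cancel in pairs, verifying absolute convergence of the resulting infinite product of polynomials in $u=w^2$, and confirming that the linear exponential $e^{Bw}$ really does disappear. Once this is settled, the key inequality $\beta>\alpha$ supplied by the hypothesized zero-free region does the essential work, as it is exactly what rules out a negative coefficient at $u=w^2$ in any quadruple factor, and everything else is a short combinatorial consequence.
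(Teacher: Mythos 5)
Your proposal is correct and follows essentially the same route as the paper's proof: shift to the center, use the functional equation together with self-duality to group the zeros into central, imaginary-axis pairs and off-axis quadruples, and rewrite the order-one Hadamard product as $Cw^m$ times a product of factors $1+u/\gamma^2$ and $1+\tfrac{2(\beta^2-\alpha^2)}{(\alpha^2+\beta^2)^2}u+\tfrac{u^2}{(\alpha^2+\beta^2)^2}$ in $u=w^2$, where the hypothesized zero-free triangle is exactly what makes every coefficient non-negative. You even supply details the paper leaves to the reader or to Yun--Zhang (the disappearance of the linear exponential, strict positivity of the coefficients via the infinitude of zeros, and the explicit verification of condition (3)), so no changes are needed.
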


\begin{remark}
  The above theorem is essentially due to Stark--Zagier \cite{stark1980property},
  but we will give the short simple proof in \S \ref{sec:proof1} 
  for the convenience of the reader.
\end{remark}

\begin{remark} 
Theorem \ref{thm:superpositivity1} provides a method to manually check if an individual L-function has the super-positivity property. It is enough to check, for example, that  all the zeros of $\Lambda(s,\pi)$ with imaginary part $\leq 1/2$ are on the line $\Re(s) = 1/2.$ By examining the zeros of L-functions database \cite{LMFDB} one sees that all the self dual L-functions in this database do indeed satisfy the super-positivity property.
\end{remark}

Next, we apply the theorem \ref{thm:superpositivity1} to show that there is an infinite family of $GL(2)$ L-functions with the super-positivity property.
Let $S_k$ denote the space of holomorphic cusp forms $f$ of  weight $k$
for $SL(2, \mathbb Z)$. For $f \in S_k$, let $L(s,f)$ denote the L-function associated to $f$.  For ease of notation, we shall say $L(s, f)$ has the {\it ``super-positivity property''} if its associated cuspidal automorphic representation has the super-positivity property.

Let $H_k$ be the basis of forms of $S_k$ that are
eigenfunctions of all the Hecke operators.  Denote
\[
  \begin{split}
     \cH_{+}(K) & := \{ f\in H_k:\ K\leq k\leq 2K,\ k\equiv0\ (\mod 4) \}, \\
     \cH_{-}(K) & := \{ f\in H_k:\ K\leq k\leq 2K,\ k\equiv2\ (\mod 4) \}.
  \end{split}
\]

\begin{remark} \label{remark1} For $f \in  \cH_{+}(K)$, the form $f$ is even, the sign in the functional equation of $L(s,f)$ is positive, and  $L(s, f)$ must have a zero of even order at $s = 1/2.$ Similarly for  $f \in  \cH_{-}(K)$ the form $f$ is odd, the sign in the functional equation of $L(s,f)$ is negative, and  $L(s, f)$  must have a zero of odd order at $s = 1/2.$
\end{remark}

\begin{theorem}\label{thm:superpositivity2}
The number of $f \in  \cH_{-}(K)$ such that $L(s, f)$ has no zero in the region $s=\sigma+it$ with
$1/2 < \sigma < 1, \; |t| \leq \sigma - 1/2$
is $\gg  K^2/\log K.$
\end{theorem}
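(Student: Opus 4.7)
By Theorem~\ref{thm:superpositivity1}, it suffices to exhibit $\gg K^2/\log K$ eigenforms $f\in\cH_-(K)$ for which $L(s,f)$ has no zero in the triangular region
$$R:=\{\sigma+it:\ 1/2<\sigma<1,\ |t|\le \sigma-1/2\}.$$
Writing $N_f(R)$ for the number of such zeros (with multiplicity), the inequality $\#\{f:N_f(R)=0\}\ge |\cH_-(K)|-\sum_f N_f(R)$ together with $|\cH_-(K)|\asymp K^2$ reduces the theorem to a zero-density estimate of the shape
$$\sum_{f\in\cH_-(K)} N_f(R) \le \bigl(1-c/\log K\bigr)\,|\cH_-(K)|$$
for some absolute constant $c>0$.

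The plan is to prove this by the mollified second moment method of Selberg, combined with a zero-detecting use of Littlewood's lemma, in the spirit of the zero-density arguments for families of automorphic L-functions in the work of Iwaniec--Sarnak and Kowalski--Michel. Introduce a short Dirichlet polynomial mollifier
$$M(s,f) = \sum_{n\le y}\mu_f(n)\,P\!\Big(\frac{\log(y/n)}{\log y}\Big) n^{-s}, \qquad y=K^\theta,$$
where $\mu_f(n)$ are the Dirichlet coefficients of $1/L(s,f)$, $P$ is a polynomial with $P(0)=0$ and $P(1)=1$, and $0<\theta<1$ is fixed. Set $G(s,f):=L(s,f)M(s,f)$; this product retains the zeros of $L(s,f)$ in $R$ but is close to $1$ in mean square on the critical line. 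Choose a rectangle $\Omega\supset R$ whose right edge lies slightly to the right of $\sigma=1$ and whose left edge lies slightly to the left of $\sigma=1/2$. Applying Littlewood's lemma on $\Omega$, with the standard Selberg trick of taking the right edge at some $\sigma_1>1$ so that $\sigma_1-\beta_\rho\ge c>0$ for every zero $\rho\in R$, gives an upper bound for $N_f(R)$ by a contour integral of $\log|G(s,f)|$ around $\partial\Omega$, up to easily controlled contributions from the zeros of the Dirichlet polynomial $M(s,f)$ in $\Omega\setminus R$.

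Averaging over $f$ and applying Cauchy--Schwarz reduces the problem to evaluating the mollified second moment
$$\sum_{f\in\cH_-(K)}\omega_f\,\bigl|L(\sigma+it,f)M(\sigma+it,f)\bigr|^2,$$
weighted by the Petersson weights $\omega_f$, for $\sigma+it$ on the vertical lines forming $\partial\Omega$. Using the Petersson trace formula for each weight $k\in[K,2K]$ with $k\equiv 2\pmod 4$ and summing over $k$, the diagonal contribution yields a main term of size $O(1)$ per form, reflecting that a mollifier of length $K^\theta$ exactly cancels the $\log K$ growth of the unmollified second moment, while the off-diagonal terms involve Kloosterman sums and are controlled by Weil's bound together with a standard analysis of the Bessel transform. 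Removing the harmonic weights by the usual Mellin inversion converts these into natural averages over $\cH_-(K)$, and the single factor of $\log K$ saved by the mollifier yields exactly $\sum_f N_f(R)\le (1-c/\log K)|\cH_-(K)|$.

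The main technical obstacle will be the sharp evaluation of the mollified second moment on the entire boundary $\partial\Omega$, not merely at the central point. In particular one must ensure that even very near $\sigma=1/2$ the mollifier removes the full $\log K$ worth of mean-square growth, while for $\sigma$ near $1$ the standard convexity estimate for $L(s,f)$ contributes acceptably. The off-diagonal Kloosterman estimates and the precise extraction of the main term in the presence of the polynomial $P$, balanced against the constraint $\theta<1$, are the principal sources of difficulty, and the gain of just one factor of $\log K$ is precisely what allows us to beat the trivial bound $\sum_f N_f(R)\le|\cH_-(K)|$ by the required amount $cK^2/\log K$.
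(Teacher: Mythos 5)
Your reduction (``the number of zero-free $f$ is at least $|\cH_-(K)|-\sum_f N_f(R)$'') is fine, but the plan for proving $\sum_f N_f(R)\le(1-c/\log K)|\cH_-(K)|$ does not work as described. Littlewood's lemma weights each zero by its horizontal distance to the left edge of the rectangle, so a zero with $\beta-\tfrac12\asymp 1/\log K$ --- exactly the zeros you must exclude, and the only ones that occur in abundance --- receives weight $O(1/\log K)$. Since the mollified second moment only shows that the averaged boundary integrals of $\log|LM|$ are $O(1)$ per form, this yields on average $O(\log K)$ zeros per form, not fewer than one, and no choice of mollifier length $K^\theta$, $\theta<1$, repairs this: the loss comes from the weight, not from the moment. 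This is precisely why the paper uses Selberg's lemma (lemma \ref{lemma:selberg}) on boxes of width and height $\asymp 1/\log K$, inside which every detected zero carries weight $\gg 1$, and then covers the triangle $\cR_{1/2}$ by $\asymp\log\log K$ such boxes $\cR_j$ (with Hough's zero-density theorem handling the outermost region $\cR_J$), rather than a single application of Littlewood's lemma on a macroscopic rectangle.

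There is a second, independent obstruction your proposal does not address: any detection region capable of seeing zeros at distance $o(1/\log K)$ to the right of $s=1/2$ necessarily also sees $s=1/2$ itself, where every $f\in\cH_-(K)$ has a forced zero (remark \ref{remark1}), so the detected quantity is at least one ``unit'' for every form and a bound of the shape ``average number of detected zeros $<1$'' is structurally unreachable. The paper's way around this is claim \ref{claim}: the forced central zero contributes $4S\sinh(\pi R/2S)$, while a form with an additional zero near the center must contribute at least $12S\sinh(\pi R/2S)$, because an off-line zero comes with its reflection $1-\bar\rho$ (and a zero far enough to the right weighs correspondingly more); comparing with the explicitly evaluated mollified moments (the function $\sV(u,v)$ of \S5, lemma \ref{lemma:V-l}, theorem \ref{thm:A>}) gives the harmonic-weighted bounds $0.3613,\,0.19441,\dots$ whose sum is $\le 0.63<1$. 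Your sketch contains neither this parity/pairing counting nor the quantitative evaluation of the moments that produces a constant below $1$, so the argument cannot close. Two smaller points: theorem \ref{thm:superpositivity1} plays no role in this statement (it is only needed for theorem \ref{thm:superpositivity3}); and the paper never removes the harmonic weight --- it keeps $\omega_f$ throughout and only at the end converts a positive harmonic-weighted proportion into a count via $\omega_f\ll(\log K)/K$, which is exactly why the conclusion is $\gg K^2/\log K$ rather than a positive proportion of $\cH_-(K)$.
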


\begin{remark}
The proof of theorem \ref{thm:superpositivity2} is based on \cite{conrey2002real}, where Conrey and Soundararajan combined Selberg's lemma \ref{lemma:selberg} with the mollification method to prove that a positive proportion of quadratic Dirichlet L-functions have no zeros in $[0,1].$ The proof in \cite{conrey2002real} requires estimating the total number of zeros for the family of quadratic Dirichlet L-functions in a thin rectangular region. On the other hand, our proof of theorem \ref{thm:superpositivity2} uses many such regions to cover the triangle $1/2 < \sigma < 1, \; |t| \leq \sigma - 1/2$.
\end{remark}

By combining theorems \ref{thm:superpositivity1}, \ref{thm:superpositivity2}, we immediately obtain the following.

\begin{theorem}\label{thm:superpositivity3}
  There are infinitely many odd modular forms $f$ for $SL(2, \mathbb Z)$ such that
  $L(s,f)$ has the super-positivity property. In fact, the number of $f \in  \cH_{-}(K)$ which have the super-positivity property is $\gg  K^2/\log K.$
\end{theorem}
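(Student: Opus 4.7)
The plan is to simply combine Theorem~\ref{thm:superpositivity1} with Theorem~\ref{thm:superpositivity2}. By Theorem~\ref{thm:superpositivity2}, for each sufficiently large $K$ there exist at least $c\,K^2/\log K$ forms $f \in \cH_{-}(K)$ with the property that $L(s,f)$ is non-vanishing throughout the triangular region
\[
R := \{\sigma+it : 1/2 < \sigma < 1,\ |t| \leq \sigma - 1/2\}.
\]
First I would observe that such a holomorphic Hecke eigenform $f$ for $SL(2,\bbZ)$ corresponds to a cuspidal automorphic representation $\pi_f$ of $GL(2, \mathbb A_{\bbQ})$, and that this representation is self dual because the normalized Hecke eigenvalues of a level-one holomorphic newform are real.

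Next, I would pass from $L(s,f)$ to the completed L-function $\Lambda(s,\pi_f) = N_{\pi_f}^{(s-1/2)/2}\,L(s,\pi_f)$. The archimedean factor $\G_\infty(s,f)$ and the exponential factor $N_{\pi_f}^{(s-1/2)/2}$ have no zeros at all in the strip $0 < \Re(s) < 1$, so the zero sets of $L(s,f)$ and $\Lambda(s,\pi_f)$ inside $R$ coincide. Consequently, the non-vanishing hypothesis supplied by Theorem~\ref{thm:superpositivity2} is exactly the hypothesis required by Theorem~\ref{thm:superpositivity1}. Applying Theorem~\ref{thm:superpositivity1} individually to each of the $\gg K^2/\log K$ forms produced above yields the super-positivity property for every one of them.

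Finally, to upgrade ``$\gg K^2/\log K$ for every large $K$'' to ``infinitely many forms'', I would let $K$ run through an unbounded sequence and note that forms of distinct weights are distinct; since each dyadic range $[K, 2K]$ contributes an unbounded supply of super-positive odd forms as $K \to \infty$, one obtains infinitely many such $f$, while the quantitative bound $\gg K^2/\log K$ for a single $K$ is inherited from Theorem~\ref{thm:superpositivity2}.

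There is no genuine obstacle at this stage: all of the analytic work (mollification, moment estimates, and counting zero-free forms in the family) is absorbed into Theorem~\ref{thm:superpositivity2}, and the conversion from a zero-free region to super-positivity is handled entirely by Theorem~\ref{thm:superpositivity1}. The proof of Theorem~\ref{thm:superpositivity3} is therefore a short verification that these two hypotheses match up on the nose, which is precisely the reason the triangular region $|t| \leq \sigma - 1/2$ appears in both statements.
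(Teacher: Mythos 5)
Your proposal is correct and coincides with the paper's own argument: the paper derives Theorem \ref{thm:superpositivity3} immediately by combining Theorem \ref{thm:superpositivity1} with Theorem \ref{thm:superpositivity2}, exactly as you do, with the routine observations about self-duality and the non-vanishing of the Gamma and conductor factors left implicit. Nothing further is needed.
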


In the course of proving theorem \ref{thm:superpositivity3} we also obtained the following result as a
byproduct.
 \begin{theorem}\label{thm:rz}
  There are infinitely many  modular forms $f$ for $SL(2, \mathbb Z)$ such that
  $L(s,f)$ has no real zeros in the region $\Re(s) > 0$ except at $s = 1/2.$  In fact, the number of $f \in  \cH_{-}(K)$ (respectively  $f \in  \cH_{+}(K)$)  with
 this property is $\gg  K^2/\log K.$

\end{theorem}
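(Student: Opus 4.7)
My plan is to deduce Theorem \ref{thm:rz} from Theorem \ref{thm:superpositivity2} together with standard non-vanishing facts, after observing that the mollification proof yielding Theorem \ref{thm:superpositivity2} for $\cH_{-}(K)$ applies verbatim to $\cH_{+}(K)$.

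The first step is to reduce ``no real zero in $\Re(s) > 0$ except possibly at $s = 1/2$'' to ``no real zero in the open interval $(1/2, 1)$''. For $\sigma \geq 1$, non-vanishing is classical for any cuspidal $GL(2)$ L-function on the edge of absolute convergence. For $\sigma \in (0, 1/2)$, the functional equation $\Lambda(s, f) = \pm \Lambda(1-s, f)$ together with the fact that the archimedean gamma factor has no zeros in $\Re(s) > 0$ forces any real zero in $(0, 1/2)$ to produce a real zero in $(1/2, 1)$. The point $s = 1/2$ itself is excluded by the statement. So it suffices to exhibit $\gg K^2/\log K$ forms in each of $\cH_{\pm}(K)$ whose L-function has no real zero in $(1/2, 1)$.

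For $\cH_{-}(K)$ the reduction is immediate, because the triangle $\{\sigma + it : 1/2 < \sigma < 1,\ |t| \leq \sigma - 1/2\}$ of Theorem \ref{thm:superpositivity2} contains the real segment $(1/2, 1)$ on the line $t = 0$. Hence the forms produced by Theorem \ref{thm:superpositivity2} already have the required property, and the count $\gg K^2/\log K$ is inherited.

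For $\cH_{+}(K)$ I would rerun the proof of Theorem \ref{thm:superpositivity2} on this family. Following Conrey--Soundararajan, that proof combines Selberg's lemma, which covers the triangle by thin rectangles, with a mollified first and second moment estimate computed via the Petersson trace formula over $H_k$ to bound the number of zeros in each rectangle. Restricting $k$ to a residue class modulo $4$ changes the family size by only a constant factor, and the trace formula, the off-diagonal Kloosterman/Bessel analysis, and the choice of mollifier are all insensitive to the sign of the functional equation of $L(s,f)$. The one place where that sign mattered in the original Conrey--Soundararajan argument is in a non-vanishing statement at $s = 1/2$; but real zeros at $s = 1/2$ are allowed here, so no such input is needed. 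The main obstacle, and the only point I would verify carefully, is the bookkeeping that confirms that the main terms and error estimates for the mollified moments over $\cH_{+}(K)$ match those over $\cH_{-}(K)$ to the accuracy required by the zero-density argument; this should follow from the uniformity of the Petersson formula across residue classes of $k$.
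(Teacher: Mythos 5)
Your reduction to the interval $(1/2,1)$ via the functional equation and classical non-vanishing at $\Re(s)\ge 1$ is fine, and your treatment of $\cH_{-}(K)$ — quoting theorem \ref{thm:superpositivity2} and noting that the triangle contains the real segment — is valid and non-circular, though it repackages rather than replaces the real work: in the paper the $\cH_{-}(K)$ case of theorem \ref{thm:rz} is proved directly in \S\ref{sec:thmrz} (Selberg's lemma with $W_0=\frac12-\frac{R}{\log K}$, $H=\frac{S}{\log K}$, plus the mollified moment asymptotics), and that computation is itself an input to theorem \ref{thm:superpositivity2}, not a consequence of it.

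The genuine gap is in your $\cH_{+}(K)$ argument. It is not true that ``the trace formula, the off-diagonal Kloosterman/Bessel analysis, and the choice of mollifier are all insensitive to the sign of the functional equation.'' The Petersson formula (lemma \ref{lemma:PTF}) carries the factor $i^{-k}$, so averaging over $k\equiv 0\ (\mod 4)$ instead of $k\equiv 2\ (\mod 4)$ in lemma \ref{lemma:average-over-k} flips the sign of the off-diagonal main term; concretely, the third main term $-2\Re\{\zeta(1+2it)\eta_\delta(\ell)\ell^{-1/2-it}(\frac{K}{4\pi})^{-2\delta+2it}\cdots\}$ in theorem \ref{thm:tsm} becomes $+2\Re\{\cdots\}$ for the even family, which changes $\sV_3(u,v)$ and hence all the numerical bounds in \S\ref{sec:thmrz}. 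Moreover, the sign enters the odd-family argument a second time, and to its advantage rather than as an obstruction: claim \ref{claim} uses the forced zero $L(1/2,f)=0$ to guarantee the baseline contribution $4S\sinh(\frac{\pi R}{2S})$, which is what permits subtracting $\frac12$ in \eqref{eqn:N0toJ} and yields the strong bound \eqref{eqn:N0}. For $\cH_{+}(K)$ that free central zero is absent, so the density bookkeeping and the optimization of $P$, $\Upsilon$, $R$, $S$ must be redone with the sign-flipped main term, and one must verify that the resulting zero-density bound is still strictly less than $1$ (it is — this is exactly why the known proportions are markedly smaller for even forms, roughly $38\%$ versus $71\%$ in Conrey--Soundararajan's unpublished work cited in the paper — but it does not follow ``verbatim'' from the odd case as you assert). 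So your overall strategy for $\cH_{+}(K)$ is the right one, and indeed the paper itself only sketches that a ``similar proof'' exists, but the specific justification you give for why the two families match — that the only role of the sign is a non-vanishing statement at $s=1/2$ which can be dropped — is incorrect and leaves the even case unproved as written.
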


\begin{remark}
  We learned from Ricotta \cite[p. 292]{ricotta2006real},
  that in 2003, Soundarajaran announced a result similar to theorem \ref{thm:rz} for $\cH_{+}(K)$ and $\cH_{-}(K)$. Recently, Soundararajan informed us that he and Conrey proved that for $K$ large, at least 71\% of $f \in  \cH_{-}(K)$ and at least 38\% of $f\in \cH_{+}(K)$ satisfy theorem \ref{thm:rz}. However, they never published this result.
 The proof for the case $\cH_{-}(K)$ is given in \S \ref{sec:thmrz}  because it is needed
  for the proof of theorem \ref{thm:superpositivity3}.
  This result is not optimal since we didn't try to remove the harmonic weight and our choice of the mollifier is not optimal.
   One may also give a similar proof for $\cH_{+}(K)$,
 but it is omitted here since this is not needed for our main results.\end{remark}

 Although we focussed on the family of Hecke cusp forms $H_k$ for $SL(2, \mathbb Z)$, the methods introduced in this paper can also be applied to other families of automorphic forms of varying level, weight and spectrum. We plan to continue these investigations in a future research paper.

\section{Proof of theorem \ref{thm:superpositivity1}} \label{sec:proof1}

\begin{proof}

Let $$\lambda(s,\pi) := \Lambda(s+1/2,\pi).$$ Then, as in \cite{Y-Z}, the function $\lambda(s,\pi)$ has the following properties:
\begin{align*}
&\bullet \; \lambda(-s) = \pm \lambda(s),
\\
&\bullet \; \lambda(\sigma) > 0 \; \text{for}\;  \sigma > 1/2,\\
&\bullet \; \lambda(s) \;\text{is entire and of  order 1,}\\
&\bullet \; \text{If}\; \lambda(\pm\rho_k) = 0\; (\text{with $\rho_k\ne0, \;\Im\rho_k \ge 0$}) \;\text{for}\; k=1,2,\ldots, \text{(denotes  the non central zeros of $\lambda(s)$)} \\
 & \phantom{xx}\text{then for some integer $m\ge 0$ and  $A > 0,$}
\end{align*}
\begin{equation}\label{eq:RHassume1}
\lambda(s) = s^m e^{A} \prod_{k=1}^\infty \left(1- \frac{s^2}{\rho_k^2}  \right), \qquad\qquad (\text{\rm Hadamard Product Formula} ).
\end{equation}

Now, let $\beta +i\gamma$ be a non-trivial zero of $\lambda(s)$ with $\beta, \gamma\in \mathbb R.$ Then by our assumptions either
\begin{align*} &\bullet \; \beta = 0 \;\text{and there are two zeros of} \;\lambda(s)\;\text{at} \; s = \pm i\gamma;\\
&\bullet \;\beta > 0, \; |\gamma| > \beta, \; \text{and there are four zeros of} \; \lambda(s)\;  \text{at} \; s = \beta+i\gamma,\,  -\beta+i\gamma,\,  \beta-i\gamma,\, -\beta-i\gamma.
\end{align*}
Consequently, we may rewrite (\ref{eq:RHassume1}) in the form
\begin{equation} \label{eq:RHassume2}
\lambda(s) =  s^m e^A \underset {\beta>0 \;\text{and}\; |\gamma|>\beta}{\prod_{\lambda(\beta+i\gamma)=0}} \left(1+ \frac{(2\gamma^2-2\beta^2) s^2 +s^4}{
(\gamma^2+\beta^2)^2}  \right) \underset{\beta = 0} {\prod_{\lambda(\beta+i\gamma)=0}} \left(1+\frac{s^2}{\gamma^2}   \right)
.\end{equation}

It immediately follows from (\ref{eq:RHassume2}) that all derivatives of $\lambda(s)$ at $s = 0$ must be greater or equal to zero and all derivatives of $\lambda(s)$ at $s = \sigma > 1/2$ must be positive.  Condition (3) of  definition \ref{def:superpositivity} for super-positivity of $\pi$  follows as in \cite{Y-Z}.
\end{proof}

\section{Requisite background material needed for the proof of theorem \ref{thm:superpositivity2} }

\subsection{Selberg's Lemma}

We will need the following version of the argument principle,
which is due to Selberg.

\begin{lemma}\label{lemma:selberg}
 For $W\in \mathbb R$, let $\phi(s)$ be a holomorphic function of a complex variable $s$ that does not vanish on a half-plane $\Re(s)\geq W$.
  Let $\cB$ be the rectangular box of vertices $W_0\pm iH$, $W_1\pm iH$, where $H>0$
  and $W_0<W<W_1$. Then we have
  \[
    \begin{split}
       &   4H\sum_{\substack{\beta+i\gamma\in\cB \\ \phi(\beta+i\gamma)=0}}
             \cos\left(\frac{\pi \gamma}{2H}\right) \sinh\left(\frac{\pi(\beta-W_0)}{2H}\right)
          = \int\limits\limits_{-H}^{H} \cos\left(\frac{\pi t}{2H}\right) \log|\phi(W_0+it)| \;dt \\
         & \hskip 137pt + \int\limits_{W_0}^{W_1} \sinh\left(\frac{\pi(\alpha-W_0)}{2H}\right) \log|\phi(\alpha+iH)\phi(\alpha-iH)| \;d\alpha \\
         & \hskip 167pt - \Re\left(\int\limits_{-H}^{H}\cos\left(\frac{\pi(W_1-W_0+it)}{2iH}\right)(\log \phi)(W_1+it) \; dt\right).
    \end{split}
  \]
\end{lemma}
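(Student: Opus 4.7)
The plan is to derive the identity from a weighted form of Littlewood's formula, with the entire kernel $K(s):=\cos\bigl(\pi(s-W_0)/(2iH)\bigr)=\cosh\bigl(\pi(s-W_0)/(2H)\bigr)$. A direct check shows that the boundary values of $K$ on the four sides of $\partial\cB$ are exactly the weights $\cos(\pi t/(2H))$, $\pm i\sinh(\pi(\alpha-W_0)/(2H))$, and $\cos(\pi(W_1-W_0+it)/(2iH))$ appearing in the four terms of the statement. The key setup is to define $\log\phi$ as the single-valued branch on $\cB$ minus horizontal \emph{leftward} cuts going from each zero $\rho=\beta+i\gamma$ to the point $W_0+i\gamma$ on the left edge.

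Two virtues of this orientation: first, the cuts never touch the right edge $\Re s=W_1$, so the half-plane hypothesis $\phi\ne 0$ on $\Re s\geq W$ guarantees that $\log\phi$ on the right edge is simply the natural half-plane branch which the statement tacitly uses; second, the resulting multi-valuedness of $\log\phi$ is confined to the left, top, and bottom edges, where only the unambiguous modulus $\log|\phi|=\Re\log\phi$ ever enters the final formula.

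A standard shrinking-tube application of Cauchy's theorem to $K(s)\log\phi(s)$ on this slit region then yields the weighted Littlewood identity
\[
\oint_{\partial\cB}K(s)\log\phi(s)\,ds \;=\; -2\pi i\sum_{\rho\in\cB}m_\rho\bigl(G(\beta+i\gamma)-G(W_0+i\gamma)\bigr),
\]
where $G(s):=(2H/\pi)\sinh(\pi(s-W_0)/(2H))$ is an antiderivative of $K$, and the minus sign reflects the jump $\log\phi^{+}-\log\phi^{-}=+2\pi i m_\rho$ across a leftward cut. Taking imaginary parts on both sides, using $\Re G(\beta+i\gamma)=(2H/\pi)\sinh(\pi(\beta-W_0)/(2H))\cos(\pi\gamma/(2H))$ together with $\Re G(W_0+i\gamma)=0$, converts the right-hand side into exactly $-4H$ times the sum on the left of the claimed identity. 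An edge-by-edge computation of the imaginary part of the boundary integral, inserting the boundary values of $K$ listed above and noting that $\Re(\pm i\sinh\cdot\log\phi)=\mp\sinh\cdot\arg\phi$ cancels while $\Re(\cos(\pi t/(2H))\log\phi)=\cos(\pi t/(2H))\log|\phi|$, produces precisely the three boundary terms on the right. Equating the two expressions and rearranging gives the lemma.

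The main subtlety — essentially the only obstacle — is this choice of cut orientation. Pointing the cuts leftwards, rather than rightwards, is exactly what avoids a spurious error term of the form $4H\sinh(\pi(W_1-W_0)/(2H))\sum_\rho\cos(\pi\gamma/(2H))$ that would otherwise appear on the zero side of the Littlewood identity and need to be absorbed by a branch-correction term on the right edge; with the leftward convention it never appears in the first place.
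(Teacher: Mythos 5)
Your proposal is correct, and it is essentially the proof of the cited result: the paper itself offers no argument beyond referring to Selberg's Lemma 14 and Conrey--Soundararajan's Lemma 2.1, whose proof is exactly this weighted Littlewood-type contour integration of $\log\phi(s)$ against the kernel $\cos\bigl(\pi(s-W_0)/(2iH)\bigr)=\cosh\bigl(\pi(s-W_0)/(2H)\bigr)$ with antiderivative $\frac{2H}{\pi}\sinh\bigl(\pi(s-W_0)/(2H)\bigr)$, leftward cuts to the edge $\Re(s)=W_0$, and imaginary parts taken at the end. The only blemish is the phrase ``$\Re(\pm i\sinh\cdot\log\phi)=\mp\sinh\cdot\arg\phi$ cancels'': since $ds$ is real on the horizontal edges, taking imaginary parts simply discards the $\arg\phi$ contribution there (nothing needs to cancel), and the surviving terms are $\mp\sinh\bigl(\pi(\alpha-W_0)/(2H)\bigr)\log|\phi(\alpha\pm iH)|$ exactly as in the statement.
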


\begin{proof}
  See Selberg \cite[Lemma 14]{selberg1946contributions2}
  or Conrey--Soundararajan \cite[Lemma 2.1]{conrey2002real}.
\end{proof}

\subsection{An average of the $J$-Bessel function}

\begin{lemma}\label{lemma:average-over-k}
  Suppose $\Phi\in C_0^\infty(\bbR^+)$ is a real valued function and $K\geq1$.
  For $x>0$, we have
  $$4\sum_{k\equiv2(4)}\Phi\left(\frac{k-1}{K}\right)J_{k-1}(x)  =  \Phi\left(\frac{x}{K}\right)
      + \frac{K}{\sqrt{x}} \Im\left(e^{-2\pi i/8}e^{ix}\check{\Phi}\left(\frac{K^2}{2x}\right)\right)
      + \mathcal{O}\left(\frac{x}{K^3}\int\limits_{-\infty}^{\infty}|v|^3|\hat{\Phi}(v)| \; dv\right),
   $$
  where
  $$
    \check{\Phi}(v) := \int_{0}^{\infty} \frac{\Phi(\sqrt{u})}{\sqrt{2\pi u}}e^{iuv}du,
  $$
  and $\hat{\Phi}$ is the Fourier transform of $\Phi$.
  The implied constant is absolute.
\end{lemma}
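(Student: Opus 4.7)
The plan is to use Bessel's classical integral representation
$$J_{k-1}(x)=\frac{1}{2\pi}\int_{-\pi}^{\pi} e^{i((k-1)\theta-x\sin\theta)}\,d\theta$$
together with a single application of Poisson summation in $k$. Interchanging sum and integral on the left hand side reduces the computation to evaluating $S(\theta):=\sum_{k\equiv 2(4)}\Phi(\tfrac{k-1}{K})e^{i(k-1)\theta}$. Writing $k=4m+2$ and extending the sum to all $m\in\bbZ$ (no cost, since $\Phi$ has compact support in $\bbR^{+}$), Poisson gives
$$S(\theta)=\frac{K}{4}\sum_{n\in\bbZ}i^{n}\,\hat{\Phi}\!\left(\frac{Kn}{4}-\frac{K\theta}{2\pi}\right),$$
so that
$$4\sum_{k\equiv 2(4)}\Phi\!\left(\frac{k-1}{K}\right)J_{k-1}(x)=\frac{K}{2\pi}\sum_{n\in\bbZ}i^{n}\!\int_{-\pi}^{\pi}e^{-ix\sin\theta}\,\hat{\Phi}\!\left(\frac{Kn}{4}-\frac{K\theta}{2\pi}\right)d\theta.$$

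Since $\hat{\Phi}$ is Schwartz, only those $n\in\bbZ$ for which $\theta\approx n\pi/2$ can contribute significantly; for $\theta\in[-\pi,\pi]$ this restricts us to $n=0$ and $n=\pm 1$, with the remaining $|n|\ge 2$ contributing $O_{A}(K^{-A})$ for any $A>0$ by rapid decay. I would isolate the $n=0$ term by the change of variable $\phi=-K\theta/(2\pi)$, which converts the integral into $(2\pi/K)\int_{-K/2}^{K/2}\hat{\Phi}(\phi)e^{ix\sin(2\pi\phi/K)}\,d\phi$. The Taylor expansion $\sin(2\pi\phi/K)=2\pi\phi/K+O((\phi/K)^{3})$ combined with Fourier inversion extracts the main term
$$\int\hat{\Phi}(\phi)e^{2\pi i x\phi/K}\,d\phi=\Phi(x/K),$$
and the cubic remainder contributes exactly the stated error $O(xK^{-3}\int|v|^{3}|\hat{\Phi}(v)|\,dv)$; the tails $|\phi|>K/2$ decay faster than any power of $K$.

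For $n=\pm 1$ the stationary points are $\theta=\pm\pi/2$, and I would set $\theta=\pm\pi/2-2\pi\phi/K$ so that $\sin\theta=\pm\cos(2\pi\phi/K)=\pm 1\mp 2\pi^{2}\phi^{2}/K^{2}+O((\phi/K)^{4})$. Each term reduces to a Gaussian integral $\int\hat{\Phi}(\phi)e^{\pm 2\pi^{2}ix\phi^{2}/K^{2}}d\phi$, which I would evaluate by inserting $\hat{\Phi}(\phi)=\int\Phi(u)e^{-2\pi i u\phi}du$ and performing the standard Fresnel integral in $\phi$ to obtain $\frac{K}{2\sqrt{x}}e^{\pm i\pi/4}\check{\Phi}(\mp K^{2}/(2x))$. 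Assembling the $n=\pm 1$ contributions with the prefactors $i^{\pm 1}e^{\mp ix}$, and using $\overline{\check{\Phi}(v)}=\check{\Phi}(-v)$ (since $\Phi$ is real), the two terms fold into a single imaginary part and produce precisely $\frac{K}{\sqrt{x}}\Im(e^{-2\pi i/8}e^{ix}\check{\Phi}(K^{2}/(2x)))$, as claimed.

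The main technical obstacle is book-keeping the remainders: every Taylor expansion, truncation, and change of variable produces an error that must be dominated by the stated bound. The cubic error from the $n=0$ term is the dominant one; the quartic error from $n=\pm 1$ is smaller by a factor $K^{-1}$, and Schwartz decay absorbs both the tails and the $|n|\ge 2$ contributions. Tracking the factors $i^{n}$, the Jacobians $-2\pi/K$, and the conjugation symmetry carefully is what produces the exact phase $e^{-2\pi i/8}$ in the main term.
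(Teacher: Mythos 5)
Your route --- Bessel's integral representation, Poisson summation in $k$, the linear expansion at $\theta\approx 0$ giving $\Phi(x/K)$, and stationary phase at $\theta\approx\pm\pi/2$ giving the $\check\Phi$ term --- is exactly the argument in the sources the paper cites for this lemma (Iwaniec; Iwaniec--Luo--Sarnak; Khan), since the paper itself gives no proof. Your bookkeeping for the frequencies $n=0$ and $n=\pm1$ is correct: the Poisson formula for $S(\theta)$, the Fresnel evaluation producing $\frac{K}{2\sqrt{x}}e^{\pm i\pi/4}\check\Phi(\mp K^{2}/(2x))$ via $\int_0^\infty\Phi(t)e^{it^2v}\,dt=\sqrt{\pi/2}\,\check\Phi(v)$, and the folding of the two conjugate terms into $\frac{K}{\sqrt{x}}\Im\big(e^{-i\pi/4}e^{ix}\check\Phi(K^2/(2x))\big)$ all check out, as does the cubic error from $n=0$.

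There is, however, a genuine gap: the claim that all $|n|\ge 2$ contribute $O_A(K^{-A})$ ``by rapid decay'' is false for $n=\pm2$. The dual frequency $n$ localizes $\theta$ near $\pi n/2$, and for $n=\pm2$ that point is $\theta=\pm\pi$, an endpoint of your integration range, where $\hat\Phi\big(\tfrac{Kn}{4}-\tfrac{K\theta}{2\pi}\big)$ is of size $\hat\Phi(0)$ rather than small. Concretely, substituting $w=\tfrac{K}{2}-\tfrac{K\theta}{2\pi}$ turns the $n=2$ term into $-\int_0^{K}e^{-ix\sin(2\pi w/K)}\hat\Phi(w)\,dw$, whose leading part is the one-sided integral $-\int_0^\infty e^{-2\pi ixw/K}\hat\Phi(w)\,dw$; this involves the Hilbert transform of $\Phi$ rather than $\Phi$ itself, is in general of size $O(\min(1,K/x))$, and so cannot be discarded. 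The standard repair is to pair $n=2$ with $n=-2$ (equivalently, use the $2\pi$-periodicity of the integrand together with $i^{n+4}=i^{n}$ to unfold the $\theta$-integral to all of $\bbR$ with $n$ running over residues mod $4$): the two one-sided pieces reassemble into the full inversion integral, whose main term is $-\Phi(-x/K)=0$ because $\Phi$ is supported in $(0,\infty)$, leaving only a Taylor remainder of the same cubic shape as in the $n=0$ term. With that correction (and noting that on the relevant ranges $|\phi|\ll K$ your quartic remainder from $n=\pm1$ is indeed dominated by the stated third-moment bound), the proof closes.
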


\begin{proof}
  See Iwaniec \cite[Lemma 5.8]{iwaniec1997topics},
  Iwaniec--Luo--Sarnak \cite[Proposition 8.1]{iwaniec2000low},
  and Khan \cite[Lemma 2.3]{khan2010non-vanishing}.
\end{proof}

\subsection{The approximate functional equation}
Let $f(z) = \sum\limits_{n=1}^\infty \lambda_f(n) (4\pi n)^{\frac{(k-1)}{2}}e^{2\pi i nz}$ (for $z$ in the upper half plane)
be a modular form  of weight $k$ for $SL(2, \mathbb Z)$ with associated L-function
$$L(s, f) := \sum_{n=1}^\infty \frac{\lambda_f(n)}{n^s},\qquad\qquad (\Re(s) > 1).$$

Fix a smooth function $H:\bbR^+\rightarrow\bbR^+$ satisfying
$H(x)=1$ for $x\in[0,1/2]$, and $H(x)+H(1/x)=1$ for $x\in\bbR^+$.
We know the Mellin transform $\widetilde{H}(s)=\int_{0}^{\infty}H(y)y^s\frac{dy}{y}$
has a single simple pole at $0$ of residue $1$, and is odd.
Furthermore, $\widetilde{H}(s)$ satisfies the bounds
$\widetilde{H}(s) \ll_A \frac{1}{|s(s+1)\cdots (s+A-1)|},$ $A=1,2,\ldots$,
and $\widetilde{H}(s)\ll 2^{\Re(s)}$ for $\Re(s)>1$.

\begin{lemma}\label{lemma:AFE}
  Let $K\geq1$, and $k\asymp K$ an even integer.
  Let $-\frac{B}{\log K}\leq \delta\leq \vartheta$ and $t\ll K$.
  Then for any modular form $f$
  of weight $k$ for $SL(2, \mathbb Z)$, we have
  \[
    |L(1/2+\delta+it,f)|^2 = \sum_{d=1}^{\infty}\frac{1}{d^{1+2\delta}}
       \sum_{n=1}^{\infty}\frac{\lambda_f(n)\eta_{it}(n)}{n^{1/2+\delta}}
       V_{k,\delta+it}(nd^2).
  \]
 Here $\eta_\nu(n):=\sum\limits_{ad=n}\left(\frac{a}{d}\right)^\nu$ is the generalized divisor function, and for any $y>0,$
  \[
    V_{k,\delta+it}(y) := \frac{1}{2\pi i} \int\limits_{3-i\infty}^{3+i\infty}
        \frac{\widetilde{H}(s+\delta)+\widetilde{H}(s-\delta)}{(4\pi^2y)^{s-\delta}}
        \frac{\Gamma(s+k/2+it)\Gamma(s+k/2-it)}{\Gamma(\delta+k/2+it)\Gamma(\delta+k/2-it)} \; ds
  \]
  is real valued, and satisfies the following:
  \[
    \begin{split}
      V_{k,\delta+it}(y) & = 1 + (4\pi^2y)^{2\delta}\frac{\Gamma(-\delta+k/2+it)\Gamma(-\delta+k/2-it)}{\Gamma(\delta+k/2+it)\Gamma(\delta+k/2-it)} + \mathcal{O}_{A}\left(\left(\frac{y}{k^2}\right)^{A}\right), \\
      V_{k,\delta+it}(y) & \ll_{A}  \left(\frac{k^2}{y}\right)^{A},
      \quad
      y^j V_{k,\delta+it}^{(j)}(y) \ll_j 1,
    \end{split}
  \]
  for  $A>0$ and any integer $j\geq0$.
  We also have that
  \[
    V_{k,\delta+it}(y) = \frac{1}{2\pi i} \int\limits_{\alpha-i\infty}^{\alpha+i\infty} \frac{\widetilde{H}(s+\delta)+\widetilde{H}(s-\delta)}{(16\pi^2)^{s-\delta}}
        \left(\frac{k^2}{y}\right)^{s-\delta} ds + \mathcal{O}_{\ve}\left(|t|^2y^{-\ve}k^{-1+\ve}\right)
  \]
  for any $\alpha>|\delta|$.
\end{lemma}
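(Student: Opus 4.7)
The plan is to apply the classical contour-shift derivation of an approximate functional equation to the symmetric product $L(1/2+\delta+it,f)\,L(1/2+\delta-it,f)$, which equals $|L(1/2+\delta+it,f)|^{2}$ because $\lambda_{f}(n)\in\bbR$. The starting ingredient is the Hecke multiplication identity $\lambda_{f}(m)\lambda_{f}(n)=\sum_{d\mid(m,n)}\lambda_{f}(mn/d^{2})$, which yields, for $\Re(s)$ sufficiently large, the Rankin--Selberg-type Dirichlet series
$$L(s+it,f)\,L(s-it,f)=\sum_{d=1}^{\infty}\frac{1}{d^{2s}}\sum_{n=1}^{\infty}\frac{\lambda_{f}(n)\,\eta_{it}(n)}{n^{s}};$$
at $s=1/2+\delta$ the right-hand side is precisely the outer double sum appearing in the lemma.

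To introduce the smooth cutoff I would consider the contour integral
$$I:=\frac{1}{2\pi i}\int\limits_{(3)}L(1/2+s+it,f)\,L(1/2+s-it,f)\,(4\pi^{2})^{\delta-s}\bigl[\widetilde H(s+\delta)+\widetilde H(s-\delta)\bigr]\frac{\Gamma(s+k/2+it)\,\Gamma(s+k/2-it)}{\Gamma(\delta+k/2+it)\,\Gamma(\delta+k/2-it)}\,ds.$$
Opening the two $L$-factors as Dirichlet series on $\Re(s)=3$ (legitimate by absolute convergence) and integrating termwise gives $I=\sum_{d,n}\frac{\lambda_{f}(n)\eta_{it}(n)}{d^{1+2\delta}n^{1/2+\delta}}V_{k,\delta+it}(nd^{2})$. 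On the other hand, shifting the contour to $\Re(s)=-3$ picks up simple poles at $s=\delta$ (from $\widetilde H(s-\delta)$, residue $1$) and at $s=-\delta$ (from $\widetilde H(s+\delta)$, residue $1$). The first residue is $|L(1/2+\delta+it,f)|^{2}$ directly; the second residue involves $L(1/2-\delta\pm it,f)$ with an extra $\Gamma$-ratio, and one application of the functional equation $\Lambda(s,f)=\pm\Lambda(1-s,f)$ (using $\epsilon^{2}=1$) causes the stray $\Gamma$-factors to cancel, collapsing this residue also onto $|L(1/2+\delta+it,f)|^{2}$. Meanwhile, on the reflected contour the substitution $s\mapsto -s$, the oddness $\widetilde H(-z)=-\widetilde H(z)$, and a second application of the functional equation show that the remaining integral equals $-I$ on the nose. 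Hence $I=2|L(1/2+\delta+it,f)|^{2}-I$, which gives $I=|L(1/2+\delta+it,f)|^{2}$ and proves the main identity. Reality of $V_{k,\delta+it}(y)$ is a consequence of the symmetry $\overline{\Phi(\bar s)}=\Phi(s)$ of its integrand on a contour symmetric about the real axis.

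The pointwise estimates are all contour manipulations in the defining integral of $V_{k,\delta+it}(y)$ combined with Stirling. For the asymptotic expansion, shifting from $\Re(s)=3$ to $\Re(s)=-A$ with $A>|\delta|$ crosses the same two poles at $s=\pm\delta$, delivering the explicit main terms $1$ and $(4\pi^{2}y)^{2\delta}\Gamma(-\delta+k/2+it)\Gamma(-\delta+k/2-it)/[\Gamma(\delta+k/2+it)\Gamma(\delta+k/2-it)]$. Stirling bounds the $\Gamma$-ratio on the shifted line by $\ll k^{-2A-2\delta}$ uniformly for $|t|\ll K$ and $k\asymp K$, and combined with $(4\pi^{2}y)^{A}$ this yields the error $\cO_{A}((y/k^{2})^{A})$. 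The complementary bound $V_{k,\delta+it}(y)\ll(k^{2}/y)^{A}$ follows symmetrically by shifting rightward to $\Re(s)=A$ and using $\widetilde H(s\pm\delta)\ll|s|^{-A}$; differentiating $V$ in $y$ under the integral sign produces the factor $(s-\delta)^{j}y^{-j}$ and yields $y^{j}V^{(j)}\ll 1$. Finally, replacing the $\Gamma$-ratio by its Stirling main term $(k^{2}/16\pi^{2})^{s-\delta}$ uniformly on $\Re(s)=\alpha>|\delta|$ gives the last stated integral representation, with the $\cO_{\ve}(|t|^{2}y^{-\ve}k^{-1+\ve})$ error tracking the next Stirling correction.

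The principal obstacle is careful bookkeeping of the $\Gamma$-factor ratios in the functional-equation step: one must check that the residue at $s=-\delta$ does collapse to the same $|L|^{2}$ as the residue at $s=\delta$, and that the reflected integral equals $-I$ exactly. Once these symmetries are verified, the remaining estimates on $V_{k,\delta+it}(y)$ are routine applications of Stirling's formula.
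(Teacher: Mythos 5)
Your proposal is correct and follows essentially the same route as the proof the paper invokes by citation (Iwaniec--Kowalski, pp.~97--100, and Hough, Proposition 3.7): the contour integral weighted by $\widetilde H(s+\delta)+\widetilde H(s-\delta)$ against the ratio of Gamma factors, the two residues at $s=\pm\delta$, and the reflection $s\mapsto -s$ combined with oddness of $\widetilde H$ and the functional equation (with $i^{2k}=1$ for even $k$) to fold the shifted integral into $-I$, after which $I=2|L(1/2+\delta+it,f)|^{2}-I$; the Gamma cancellation in the residue at $s=-\delta$ indeed works out since $(4\pi^{2})^{2\delta}(2\pi)^{-4\delta}=1$. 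The remaining properties of $V_{k,\delta+it}$ via contour shifts and Stirling are exactly the cited computations, so no substantive gap remains.
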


\begin{proof}
  See Iwaniec--Kowalski \cite[pp. 97--100]{iwaniec2004analytic}
  and Hough \cite[Proposition 3.7]{hough2012zero}.
\end{proof}

\subsection{The Petersson trace formula}

Each Hecke eigenform $f\in H_k$ has a Fourier expansion
$$f(z) = \sum\limits_{n=1}^\infty \lambda_f(n) (4\pi n)^{\frac{(k-1)}{2}}e^{2\pi i nz}, \qquad (z\in\mathbb C, \; \Im(z) > 0),$$
 where  $\lambda_f(n) \in\mathbb R$ for $n =1,2,\ldots$  We normalize $f$ by setting $\lambda_f(1) = 1$.  The Fourier coefficients of $f$ satisfy the relation
\begin{equation}\label{eqn:HR}
  \lambda_f(m)\lambda_f(n)= \sum_{d|(m,n)}\lambda_f\left(\frac{mn}{d^2}\right).
\end{equation}
The Petersson trace formula is given by the following basic orthogonality relation on $H_k$.

\begin{lemma}\label{lemma:PTF}
  Let $m,n\geq1$. Then
  \[  \sum_{f\in H_k} \omega_f\cdot  \lambda_f(m)\lambda_f(n)
 \;   = \; \delta_{m,n} + 2\pi i^{-k} \sum_{c=1}^{\infty}
                 \frac{S(m,n;c)}{c}J_{k-1}\left(\frac{4\pi\sqrt{mn}}{c}\right).
  \]
  where $$\omega_f = \frac{12\zeta(2)}{(k-1)} \cdot \frac{1}{ L(1,\operatorname{sym}^2 f)  }$$
  is termed the harmonic weight of $f$.
\end{lemma}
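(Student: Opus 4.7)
The plan is to derive the formula from the spectral expansion of holomorphic Poincaré series, following the classical argument of Petersson (see Iwaniec, \emph{Topics in Classical Automorphic Forms}, Ch.~3, or Iwaniec--Kowalski, \emph{Analytic Number Theory}, \S14.5).

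First, for $m \ge 1$ I would introduce the $m$-th holomorphic Poincaré series of weight $k$ for $SL(2,\bbZ)$,
$$\cP_m(z) := \sum_{\g \in \G_\infty \bs SL(2,\bbZ)} (cz+d)^{-k} e^{2\pi i m \g z}, \qquad \g = \begin{pmatrix} a & b \\ c & d \end{pmatrix},$$
where $\G_\infty$ is the stabilizer of $\infty$. For $k \ge 4$ this converges absolutely and lies in $S_k$.

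Then I would carry out the two standard computations. (i)~Unfolding $\la \cP_m, f\ra$ against any $f \in H_k$ with the given Fourier expansion collapses the coset sum to an integral over the strip $\{0 \le \Re z < 1\}\cap \bbH$ and extracts the $m$-th Fourier coefficient, giving
$$\la \cP_m, f\ra = \frac{\Gamma(k-1)}{(4\pi m)^{(k-1)/2}}\,\lambda_f(m)$$
up to an explicit constant. (ii)~Computing the Fourier expansion of $\cP_m$ at $\infty$ by splitting the coset sum according to $c=0$ versus $c\ne0$ produces the Kloosterman--Bessel expansion: the $d$-sum modulo $c$ collapses to $S(m,n;c)$, and the remaining integral $\int_{-\infty}^{\infty}(u-i)^{-k} e^{2\pi i \xi u}\,du$ evaluates (via a contour shift) to an explicit constant times $J_{k-1}(4\pi\sqrt{mn}/c)$. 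This yields
$$\cP_m(z) = \sum_{n=1}^{\infty}\!\bigg(\delta_{m,n} + 2\pi i^{-k}\!\sum_{c=1}^{\infty}\!\frac{S(m,n;c)}{c} J_{k-1}\!\left(\frac{4\pi\sqrt{mn}}{c}\right)\bigg)\!\left(\frac{n}{m}\right)^{(k-1)/2}\! e^{2\pi i n z}.$$

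Next I would expand $\cP_m$ in the orthonormal basis $\{f/\|f\|\}_{f\in H_k}$ of $S_k$ and equate the $n$-th Fourier coefficient of this expansion with the one computed in step~(ii). After dividing through by the constant from step~(i), this produces the Petersson formula with the weight $\la f,f\ra^{-1}$ in place of $\omega_f$. Finally, to identify this weight with $\omega_f = 12\zeta(2)/((k-1) L(1,\operatorname{sym}^2 f))$, I would invoke the Rankin--Selberg formula: unfolding
$$\int_{SL(2,\bbZ)\bs\bbH} y^k|f(z)|^2 E(z,s)\,d\mu(z)$$
against the real-analytic Eisenstein series $E(z,s)$ and taking the residue at $s=1$ relates $\|f\|^2$ to $L(1,\operatorname{sym}^2 f)$ with a precise $k$-dependent constant; substituting back produces exactly $\omega_f$.

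The main obstacle is not conceptual but bookkeeping: tracking the $(4\pi n)^{(k-1)/2}$ factors from the Fourier normalization, the $\Gamma(k-1)$ from the unfolding, the explicit value of the Bessel integral, and the precise Rankin--Selberg constant, so that everything collapses to the stated factor $12\zeta(2)/((k-1)L(1,\operatorname{sym}^2 f))$. The analytic input (absolute convergence of the Poincaré series and of its Kloosterman sum expansion for $k \ge 4$) is standard and requires no additional work.
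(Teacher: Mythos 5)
Your proposal is correct and is essentially the paper's approach: the paper gives no argument of its own for this lemma, citing Iwaniec (Topics, Theorem 3.6) and Blomer--Khan--Young, and the classical proof behind those citations is exactly what you outline — unfolding the holomorphic Poincar\'e series $\cP_m$ against $f$, computing its Fourier expansion to produce the Kloosterman--Bessel sum, and then converting $\langle f,f\rangle^{-1}$ into $\omega_f$ via the Rankin--Selberg residue, which with the paper's normalization $a_f(n)=\lambda_f(n)(4\pi n)^{(k-1)/2}$ indeed yields $12\zeta(2)/\big((k-1)L(1,\operatorname{sym}^2 f)\big)=2\pi^2/\big((k-1)L(1,\operatorname{sym}^2 f)\big)$.
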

\begin{proof}
  See e.g. Iwaniec \cite[Theorem 3.6]{iwaniec1997topics} and
  Blomer--Khan--Young \cite[\S2.1]{blomer2013distribution}.
\end{proof}
By appealing to the well known estimate
$J_{k-1}(x) \ll \min\left(x^{k-1}, \; x^{-\frac12}   \right)$
it easily follows that
\begin{equation}\label{eq:harmonicsum}
\sum_{f\in H_k} \omega_f = 1  \;+ \;\mathcal O\left(2^{-k} \right).
\end{equation}

\subsection{The Voronoi summation formula for Eisenstein series}

Let $\nu\in\mathbb C.$ The generalized divisor function
$$\eta_\nu(n)=\sum\limits_{ad=n}\left(\frac{a}{d}\right)^\nu$$
occurs in the Fourier expansion of Eisenstein series.
We have the following version of the Voronoi summation formula.

\begin{lemma}\label{lemma:VSF}
  Let $g:\bbR^+\rightarrow\bbR^+$ be a smooth and compactly supported function.
  Let $c\geq1$ and $(a,c)=1$ with $ad\equiv1(c)$. Then
  \[
    \begin{split}
          \sum_{n=1}^\infty \eta_{it}(n)  g(n) e^{2\pi i\frac{an}{c}} & = c^{2it-1}\zeta(1-2it)\int_{0}^{\infty}g(x)x^{-it}dx +
             c^{-2it-1}\zeta(1+2it)\int_{0}^{\infty}g(x)x^{it}dx \\
         & \hskip 50pt + \frac{1}{c} \sum_{n=1}^\infty \eta_{it}(n) e\left(-\frac{dn}{c}\right)
                 \int_{0}^{\infty}g(x)J_{2it}^{+}\left(\frac{4\pi\sqrt{nx}}{c}\right)dx \\
         & \hskip 90pt + \frac{1}{c} \sum_{n=1}^\infty \eta_{it}(n) e\left(\frac{dn}{c}\right)
                 \int_{0}^{\infty}g(x)K_{2it}^{+}\left(\frac{4\pi\sqrt{nx}}{c}\right)dx,
    \end{split}
  \]
  where
  \[
    J_\nu^+(x) := \frac{-\pi}{\sin\frac{\pi \nu}{2}}\left(J_\nu(x)-J_{-\nu}(x)\right),
    \quad
    K_\nu^+(x) := 4\cos\frac{\pi \nu}{2} K_\nu(x).
  \]
\end{lemma}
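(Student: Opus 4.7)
The plan is to derive the identity via Mellin inversion combined with the functional equation of the twisted divisor Dirichlet series $D(s,a/c):=\sum_{n\geq1}\eta_{it}(n)e^{2\pi ian/c}n^{-s}$ (an Estermann-type $D$-function). First, pick $\sigma>1+|\Im t|$ and substitute the Mellin inversion $g(n)=\frac{1}{2\pi i}\int_{(\sigma)}\tilde g(s)n^{-s}ds$; absolute convergence permits interchange of sum and integral, giving
\[
\sum_{n\geq1}\eta_{it}(n)g(n)e^{2\pi ian/c} \;=\; \frac{1}{2\pi i}\int_{(\sigma)}\tilde g(s)\,D(s,a/c)\,ds.
\]
Since $\eta_{it}(n)$ is (up to normalization) the Fourier coefficient of the Eisenstein series $E(z,1/2+it)$, the function $D(s,a/c)$ admits meromorphic continuation with only two simple poles, at $s=1\pm it$, inherited from the zeta factors of the untwisted series $\zeta(s-it)\zeta(s+it)$.

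Next, I would shift the contour to $\Re(s)=1-\sigma$, crossing only these two simple poles. A direct residue computation yields $c^{2it-1}\zeta(1-2it)\tilde g(1-it)$ at $s=1-it$ and the conjugate expression at $s=1+it$; using $\tilde g(1\pm it)=\int_0^\infty g(x)x^{\mp it}dx$, these are precisely the two main terms in the statement. To evaluate the remaining integral on the line $\Re(s)=1-\sigma$, I would invoke the Estermann-type functional equation for $D(s,a/c)$, which can be derived either from the Hurwitz functional equation applied class-by-class modulo $c$, or equivalently from the modularity of $E(z,1/2+it)$ under $z\mapsto-1/z$. This functional equation expresses $D(1-s,a/c)$ as $c^{1-2s}$ times explicit ratios of Gamma functions in $s$ and $it$, multiplying the values $D(s,\pm d/c)$ (with $ad\equiv1\pmod c$), split into two pieces according to an internal sign choice.

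Expanding each piece as a Dirichlet series in $n$ and swapping the $n$-sum with the contour integral (now justified by absolute convergence in the new region) leaves two inner Mellin--Barnes integrals of the shape $\frac{1}{2\pi i}\int\tilde g(s)\frac{\Gamma(\cdots)}{\Gamma(\cdots)}\bigl(4\pi\sqrt{n}/c\bigr)^{-2s}ds$. The standard Mellin--Barnes representations of the Bessel functions identify these integrals with $\int_0^\infty g(x)J^+_{2it}\!\bigl(4\pi\sqrt{nx}/c\bigr)dx$ and $\int_0^\infty g(x)K^+_{2it}\!\bigl(4\pi\sqrt{nx}/c\bigr)dx$, which matches the statement. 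The main obstacle is bookkeeping: correctly pairing each piece of the Estermann functional equation with the right Bessel kernel, tracking which piece carries the character $e(-dn/c)$ and produces the $J^+$ kernel (via $-\pi/\sin(\pi\nu/2)\,(J_\nu-J_{-\nu})$) versus which carries $e(dn/c)$ and produces $K^+_\nu=4\cos(\pi\nu/2)K_\nu$. Modulo this sign and symmetry accounting the argument is entirely standard; the same identity, in various normalizations, appears in Motohashi's book on zeta functions, in Meurman's work on the additive divisor problem, and in Iwaniec--Kowalski, among other references.
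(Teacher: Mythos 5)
The paper gives no proof of lemma \ref{lemma:VSF} at all: its ``proof'' is the citation to Hough \cite[Lemma 3.3]{hough2012zero}. Your sketch reconstructs the standard argument underlying that citation, and it is correct in outline. With $\widetilde g(s)=\int_0^\infty g(x)x^{s-1}\,dx$ entire and rapidly decaying in vertical strips, Mellin inversion turns the left-hand side into $\frac{1}{2\pi i}\int_{(\sigma)}\widetilde g(s)D(s,a/c)\,ds$, where $D(s,a/c)=\sum_n\eta_{it}(n)e(an/c)n^{-s}$ is the Estermann function $E(s-it,a/c;-2it)$ in classical notation; for $(a,c)=1$ it has exactly two simple poles, at $s=1\pm it$, with residues $c^{\mp 2it-1}\zeta(1\pm 2it)$, and paired with $\widetilde g(1\pm it)=\int_0^\infty g(x)x^{\pm it}dx$ these give precisely the two main terms of the lemma, as you state. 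The rest is the Estermann functional equation plus the Mellin--Barnes identification of the resulting Gamma-ratios with the kernels $J^+_{2it}$ and $K^+_{2it}$; the pairing you flag as the delicate bookkeeping comes out as in the statement, namely the dual piece twisted by $e(-dn/c)$ produces the oscillatory combination $J^+_{2it}$ and the piece twisted by $e(dn/c)$ produces $K^+_{2it}=4\cos(\pi it)K_{2it}$, consistent with the classical $\tau(n)$ formula (Iwaniec--Kowalski, eq.\ (4.49)) that the paper itself invokes as an alternative, of which this is the $t\to 0$ limit. To make the sketch airtight you should also record three routine points you pass over: polynomial growth of $D(s,a/c)$ in vertical strips (via its decomposition into Hurwitz zeta functions) so that the contour shift against the rapidly decaying $\widetilde g$ is legitimate; rapid decay in $n$ of $\int_0^\infty g(x)J^+_{2it}(4\pi\sqrt{nx}/c)\,dx$ by non-stationary phase, which justifies interchanging the dual $n$-sum with the Barnes integral and gives absolute convergence of the dual series; and the degenerate case $t=0$, where $\sin(\pi it)$ vanishes and the formula is obtained by continuity (the paper treats $t=0$ as a limit in the same spirit). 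With those remarks your proposal is a sound blueprint; it differs from the paper only in that the paper outsources the entire proof to the literature.
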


\begin{proof}
  See e.g. Hough \cite[Lemma 3.3]{hough2012zero}.
\end{proof}

\section{The twisted second moment near the critical point}\label{sec:tsm}

Recall that $H_k$ denotes a basis for the space of holomorphic Hecke cusp forms of weight $k\ge 12$ for $SL(2,\mathbb Z).$
Let $\mathcal H = \bigcup_k H_k$.
Assume that for  all $f \in \mathcal H$  there is some uniquely defined $\alpha_f\in\mathbb C.$ Consider the set $$\{\alpha_f\} := \{\alpha_f\}_{f\in \mathcal H}.$$

The basic objects of study for the rest of this paper are given in the following definition.

\begin{definition} \label{defSums} Let
$$\omega_f := \frac{12\zeta(2)}{(k-1)}\cdot \frac{1}{ L(1,\operatorname{sym}^2 f)}$$ denote the harmonic weight of $f\in H_k.$
Let $\Phi:\mathbb R\to\mathbb R_{\ge 0}$ be a fixed smooth non-negative function supported on $[1,2]$ and
let $$\mathcal R_{1/2} := \big\{\beta+i\gamma \; \big | \; \beta \in (1/2,1) \; \text{and} \; |\gamma|\leq\beta - 1/2\big\}.$$

  For $K > 0,$
 define the following sums
\begin{align*}
& \mathcal A\big(\{\alpha_f\};  K, \Phi\big) \; :=  \sum_{k\equiv2(4)}\Phi\left(\frac{k-1}{K}\right)   \sum_{f\in H_k}  \omega_f \cdot\alpha_f,\\
& \mathcal A\big(K,\Phi\big) \; := \sum_{k\equiv2(4)}\Phi\left(\frac{k-1}{K}\right)  \sum_{f\in H_k} \omega_f,\\
& \cM(K, \Phi) \; := \sum_{k\equiv2(4)}\Phi\left(\frac{k-1}{K}\right)  \underset{L(s,f)\, \ne \,0 \; \text{for} \; s \,\in \,\mathcal R_{1/2}} {\sum_{f\in H_k}} \hskip-20pt \omega_f,\\
& \cN(K, \Phi) \; := \sum_{k\equiv2(4)}\Phi\left(\frac{k-1}{K}\right) \underset{ \text{one zero in}  \,\mathcal R_{1/2}} {\underset{L(s,f)\; \text{has at least} } {\sum_{f\in H_k}}}  \hskip -10pt\omega_f.
\end{align*}

\end{definition}

It is clear that
$$
  \cM(K;\Phi)+\cN(K;\Phi)=\cA(K;\Phi).
$$
The key strategy for proving  theorem \ref{thm:superpositivity2} is to try to show that $\cM(K, \Phi)$ is large compared to
$\mathcal A\big(K,\Phi\big)$.
To achieve this goal we will use the mollification method
which leads us to first consider the following twisted second moment of $L(s, f)$ at the special value $s = 1/2+\delta+it$.

\begin{theorem}\label{thm:tsm}
  Let $-\frac{B}{\log K}\leq \delta \leq \vartheta$ and $t\ll K^{\theta}$, with
  $0<2\theta\leq \vartheta\leq 1/100$ two small positive constants.
  Let $\ell\leq K^{2-4\vartheta}$. 
  We have the following asymptotic formula.
 \begin{align*}
       &  \cA\Big(\left\{\lambda_f(\ell)|L(1/2+\delta+it,f)|^2\right\}; \, K, \,\Phi\Big) \\
       & \hskip 40pt = \zeta(1+2\delta) \frac{\eta_{it}(\ell)}{\ell^{1/2+\delta}} \frac{K}{4} \int\limits_{0}^{\infty}\Phi(u)du
             + \zeta(1-2\delta) \frac{\eta_{it}(\ell)}{\ell^{1/2-\delta}} \left(\frac{K}{4\pi}\right)^{-4\delta}
              \frac{K}{4} \int\limits_{0}^{\infty} \Phi(u)u^{-4\delta}du  \\
       & \hskip 120pt - 2\Re\Bigg\{ \zeta(1+2it)\frac{\eta_\delta(\ell)}{\ell^{1/2+it}} \left(\frac{K}{4\pi}\right)^{-2\delta+2it}
                \frac{K}{4}\int\limits_{0}^{\infty}\Phi(u)u^{-2\delta+2it} \; du \Bigg\}\\
       & \hskip 200pt + \; \mathcal{O}_{\ve}\left((1+|t|)^2\ell^{-\delta}K^{\ve} + (1+|t|)^4\ell^{1/2}K^{-1+\ve}\right).
    \end{align*}

\end{theorem}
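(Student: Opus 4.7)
The plan is to attack the twisted second moment in three stages: opening $|L|^2$ via the approximate functional equation, applying Petersson's trace formula to split the $f$-sum into a diagonal and a Kloosterman piece, and treating the off-diagonal by averaging the Bessel transform over $k$ and then applying Voronoi summation.

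First I would apply Lemma \ref{lemma:AFE} to expand $|L(1/2+\delta+it,f)|^2$ as a cut-off Dirichlet series in $d,n$ and, after swapping the order of summation, reduce the target to
\begin{equation*}
\cA = \sum_{k\equiv 2(4)}\Phi\!\left(\tfrac{k-1}{K}\right)\sum_{d,n}\frac{\eta_{it}(n)}{d^{1+2\delta}n^{1/2+\delta}}V_{k,\delta+it}(nd^2)\sum_{f\in H_k}\omega_f\lambda_f(\ell)\lambda_f(n).
\end{equation*}
The Petersson formula (Lemma \ref{lemma:PTF}) splits the innermost sum into a diagonal $n=\ell$ term and an off-diagonal term $2\pi i^{-k}\sum_c S(\ell,n;c)J_{k-1}(4\pi\sqrt{\ell n}/c)/c$. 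The diagonal piece is $\sum_k\Phi\cdot\eta_{it}(\ell)\ell^{-1/2-\delta}\sum_d d^{-1-2\delta}V_{k,\delta+it}(\ell d^2)$. Using the symmetry identity from Lemma \ref{lemma:AFE} that writes $V_{k,\delta+it}(y)$ as $1$ plus a gamma-ratio factor (equal by Stirling to $(k/2)^{-4\delta}(1+O(1/k))$ times $(4\pi^2 y)^{2\delta}$) plus a tail of size $(y/k^2)^A$, the two pieces evaluate to $\zeta(1+2\delta)\eta_{it}(\ell)\ell^{-1/2-\delta}$ and $\zeta(1-2\delta)\eta_{it}(\ell)\ell^{-1/2+\delta}(k/4\pi)^{-4\delta}$, respectively, and evaluating the $k$-sum as a Riemann sum against $\Phi$ (where $k\equiv 2\pmod 4$ has density $1/4$) produces the first two main terms of the theorem.

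The third main term and the principal technical content emerge from the off-diagonal. Since $i^{-k}=-1$ for $k\equiv2\pmod4$, I would substitute the Mellin--Barnes representation of $V_{k,\delta+it}$ from Lemma \ref{lemma:AFE} so as to absorb all $k$-dependence of the integrand into a smooth weight, and then invoke Lemma \ref{lemma:average-over-k} to evaluate the resulting average $\sum_k\Phi_s((k-1)/K)J_{k-1}(4\pi\sqrt{\ell n}/c)$. This yields a \emph{localization} piece $\tfrac14\Phi_s(4\pi\sqrt{\ell n}/(cK))$, an \emph{oscillatory} piece weighted by $\check\Phi_s(cK^2/(8\pi\sqrt{\ell n}))$, and a negligible remainder. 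In the localization piece I open the Kloosterman sum $S(\ell,n;c)=\sum^{\ast}_{a\bmod c}e((a\ell+\bar a n)/c)$ and apply Voronoi (Lemma \ref{lemma:VSF}) in the $n$-variable to the twisted sum $\sum_n\eta_{it}(n)e(\bar a n/c)g_{c,k,d}(n)$. The two Voronoi main terms $c^{\pm 2it-1}\zeta(1\mp 2it)\int g(x)x^{\mp it}\,dx$ are independent of $a$, so the $a$-sum collapses to a Ramanujan sum in $c$ and $\ell$; subsequent $c$-summation against the classical identity $\sum_c c_c(\ell)c^{-s}=\sigma_{1-s}(\ell)/\zeta(s)$ cancels the simple pole of $\zeta$, producing the divisor factor $\eta_\delta(\ell)\ell^{-1/2-it}$. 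Carrying out the remaining Mellin--Barnes integrals in $d$ and $k$ then yields the scaling $(K/4\pi)^{-2\delta+2it}(K/4)\int\Phi(u)u^{-2\delta+2it}du$, and complex-conjugate pairing of the two Voronoi contributions produces the $-2\,\Re\{\cdot\}$ structure.

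The main obstacle is the error analysis: one must show that the Voronoi dual sums (with kernels $J^+_{2it}$ and $K^+_{2it}$), the oscillatory $\check\Phi_s$-piece from Lemma \ref{lemma:average-over-k}, and the tails of the $n,d$-summation where $V_{k,\delta+it}$ is small all fit into $\mathcal{O}_\ve\bigl((1+|t|)^2\ell^{-\delta}K^\ve+(1+|t|)^4\ell^{1/2}K^{-1+\ve}\bigr)$. This requires stationary-phase estimates for the transformed Bessel integrals, the hypothesis $\ell\leq K^{2-4\vartheta}$ (which keeps the transition point of $J_{k-1}$ at scale $k$ safely above the localization scale), and the restriction $t\ll K^\theta$ with $2\theta\leq\vartheta$ (which controls the size of the twisted Voronoi kernels). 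A secondary delicate point is checking that, as $\delta\to 0$ or $t\to 0$, the apparent simple poles of $\zeta(1\pm 2\delta)$ and $\zeta(1\pm 2it)$ in the three main terms cancel in their sum, so that the stated asymptotic formula is uniform down to $\delta=0$.
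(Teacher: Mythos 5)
Your overall plan coincides with the paper's proof: open $|L|^2$ by lemma \ref{lemma:AFE}, apply the Petersson formula (lemma \ref{lemma:PTF}) to split off a diagonal and a Kloosterman term, treat the off-diagonal by the $k$-average of $J_{k-1}$ (lemma \ref{lemma:average-over-k}) followed by Voronoi summation (lemma \ref{lemma:VSF}), extract the third main term from the Voronoi zeroth terms after the $a$-sum collapses to Ramanujan sums and the $c,d$-sums are evaluated via $\sum_{c,d}S(0,\ell;c)c^{-1-2s}d^{-1-2s}=\ell^{-s}\eta_s(\ell)$ (this is \eqref{eqn:GenSer-Ram}), and control the dual Bessel terms, the oscillatory $\check\Phi$-piece and the tails using $\ell\le K^{2-4\vartheta}$ and $t\ll K^\theta$. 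All of that matches \S\ref{sec:tsm}.

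The genuine gap is in your evaluation of the diagonal. You propose to substitute the two-term expansion $V_{k,\delta+it}(y)=1+(4\pi^2y)^{2\delta}\,\Gamma\text{-ratio}+\mathcal O_A\big((y/k^2)^A\big)$ with $y=\ell d^2$ and then sum over $d$ term by term. First, the resulting series $\sum_d d^{-1-2\delta}$ and $\sum_d d^{-1+2\delta}$ are never simultaneously convergent on the allowed range $-B/\log K\le\delta\le\vartheta$ (for $\delta>0$ the second diverges, for $\delta\le0$ the first does), so $\zeta(1+2\delta)$ and $\zeta(1-2\delta)$ cannot be produced this way without analytic continuation. Second, and fatally for the stated precision, even after truncating by the decay of $V$, the transition range $d\asymp K/\sqrt{\ell}$, where the expansion error $\mathcal O_A((\ell d^2/k^2)^A)$ is of size $O(1)$, contributes about $\sum_{d\asymp K/\sqrt{\ell}}d^{-1-2\delta}\asymp \ell^{\delta}K^{-2\delta}$ for each $k$, hence after the prefactor $\ell^{-1/2-\delta}$ and the $k$-sum an error of order $K^{1-2\delta}\ell^{-1/2}$ in $\cD$. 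This dwarfs the error term $(1+|t|)^2\ell^{-\delta}K^{\ve}+(1+|t|)^4\ell^{1/2}K^{-1+\ve}$ claimed in theorem \ref{thm:tsm}, and such a loss is unaffordable once the formula is summed against the mollifier coefficients in \S\ref{sec:msm}. The paper avoids this by never leaving the Mellin--Barnes representation of $V$: the $d$-sum is executed inside the $s$-integral (giving $\zeta(1+2s)$), the $k$-sum is evaluated by Stirling and Poisson summation, and the contour is shifted to $\Re(s)=-1/2+\delta+\ve$, so the two diagonal main terms arise as residues at $s=\pm\delta$ (the poles of $\widetilde H(s\mp\delta)$) and the shifted integral is bounded by $\mathcal O_\ve\big((1+|t|)^2\ell^{-\delta}K^{4\ve}\big)$, as in \eqref{eqn:cD=}. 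Since you already invoke the Mellin--Barnes form of $V$ for the off-diagonal, this is the natural repair; but as written, your diagonal step would not reach the asymptotic formula with the stated error.
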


We now begin the proof of theorem \ref{thm:tsm}.
From the approximate functional equation, we have
\begin{align*}
       &   \cA\Big(\left\{\lambda_f(\ell)|L(1/2+\delta+it,f)|^2\right\}; \, K, \,\Phi\Big) = \sum_{k\equiv2(4)}\Phi\left(\frac{k-1}{K}\right)
            \underset{f\in H_k}{{\sum}}\omega_f\cdot \lambda_f(\ell)|L(1/2+\delta+it,f)|^2 \\
       &\hskip 55pt = \sum_{k\equiv2(4)}\Phi\left(\frac{k-1}{K}\right) \sum_{d=1}^{\infty}
       \sum_{n=1}^{\infty}\frac{\eta_{it}(n)}{d^{1+2\delta}n^{1/2+\delta}}
        V_{k,\delta+it}(nd^2) \underset{f\in H_k}{{\sum}}\omega_f\cdot \lambda_f(\ell)\lambda_f(n).
\end{align*}

Applying the Petersson trace formula we obtain that
\begin{equation}\label{eqn:D+F}
  \cA\Big(\left\{\lambda_f(\ell)|L(1/2+\delta+it,f)|^2\right\}; \, K, \,\Phi\Big) = \cD + \cF,
\end{equation}
where we have the diagonal term
\begin{equation*}
  \cD := \frac{\eta_{it}(\ell)}{\ell^{1/2+\delta}} \sum_{d=1}^{\infty}\frac{1}{d^{1+2\delta}}
         \sum_{k\equiv2(4)}\Phi\left(\frac{k-1}{K}\right) V_{k,\delta+it}(\ell d^2),
\end{equation*}
and the off-diagonal term
\begin{equation*}
  \begin{split}
    \cF & :=  -2\pi \sum_{d=1}^{\infty}
         \sum_{n=1}^{\infty}\frac{\eta_{it}(n)}{d^{1+2\delta}n^{1/2+\delta}}
         \sum_{c=1}^{\infty}\frac{S(n,\ell;c)}{c} \\
        & \qquad\qquad\qquad \cdot \sum_{k\equiv2(4)}\Phi\left(\frac{k-1}{K}\right) V_{k,\delta+it}(nd^2)
         J_{k-1}\left(\frac{4\pi\sqrt{\ell n}}{c}\right).
  \end{split}
\end{equation*}

\subsection{The diagonal term}

From now on, we let $\vartheta$ be a fixed positive real number less than $1/100$.
We first handle the case $-\frac{B}{\log K}\leq \delta \leq \vartheta$
and $\delta\neq0$. Note that for the remaining case $\delta=0$,
we can just view it as the limitation of $\delta\rightarrow0$.
Introducing the integral defining $V$, we have
\[
  \begin{split}
    \cD & =  \frac{\eta_{it}(\ell)}{\ell^{1/2+\delta}} \frac{1}{2\pi i}
            \int\limits_{\delta+\ve -i\infty}^{\delta+\ve +i\infty} \frac{\zeta(1+2s)}{(4\pi^2\ell)^{s-\delta}}
            \left[\widetilde{H}(s-\delta)+\widetilde{H}(s+\delta)\right] \\
        & \hskip 80pt\cdot \sum_{k\equiv2(4)}\Phi\left(\frac{k-1}{K}\right) \frac{\Gamma(s+k/2+it)\Gamma(s+k/2-it)}{\Gamma(\delta+k/2+it)\Gamma(\delta+k/2-it)} \; ds.
  \end{split}
\]
Since we have
$$\left|\frac{\Gamma(s+k/2+it)\Gamma(s+k/2-it)}
  {\Gamma(\delta+k/2+it)\Gamma(\delta+k/2-it)}\right|
  \ll
  \frac{\Gamma(\delta+\ve+k/2)\Gamma(\delta+\ve+k/2)}
  {\Gamma(\delta+k/2)\Gamma(\delta+k/2)}\ll k^{2\ve}$$
for $\Re(s)=\delta+\ve$ and rapid decay of $\widetilde{H}(s-\delta)+\widetilde{H}(s+\delta)$
on the vertical line $\Re(s)=\delta+\ve$,
we can restrict the integral above to $|\Im(s)|\leq K^\ve$ with
an error of $O_{\ve,B}(\ell^{-1/2-\delta}K^{-B})$.
For $\Re(s)=\delta+\ve$ and $|\Im(s)|\leq K^\ve$,
it follows from Stirling's formula that
\[
  \frac{\Gamma(z+u)}{z} = z^u \left(1+ \mathcal O\left(\frac{|u|^2}{|z|}\right)\right),
\]
so we have (recalling that $t\ll K^\theta$)
\[
  \frac{\Gamma(s+k/2+it)\Gamma(s+k/2-it)}{\Gamma(\delta+k/2+it)\Gamma(\delta+k/2-it)}
  = \left(\frac{k-1}{2}\right)^{2(s-\delta)}  \Big(1+ \mathcal O\big((1+|t|)^2 k^{-1+2\ve}\big)\Big);
\]
and then, together with Poisson summation formula
(cf. Iwaniec--Kowalski \cite[eq. (4.24)]{iwaniec2004analytic})
we obtain
  \begin{align*}
       & \sum_{k\equiv2(4)}\Phi\left(\frac{k-1}{K}\right) \frac{\Gamma(s+k/2+it)\Gamma(s+k/2-it)}{\Gamma(\delta+k/2+it)\Gamma(\delta+k/2-it)} \\
       & \hskip 80pt = \sum_{k\equiv2(4)}\Phi\left(\frac{k-1}{K}\right) \left(\frac{k-1}{2}\right)^{2(s-\delta)}
             \Big(1+ \mathcal O\left((1+|t|)^2k^{-1+2\ve}\right)\Big) \\
       & \hskip 80pt = \left(\frac{K}{2}\right)^{2(s-\delta)} \sum_{k\equiv2(4)}\Phi\left(\frac{k-1}{K}\right)
             \left(\frac{k-1}{K}\right)^{2(s-\delta)}  +\;\;\; \mathcal O\Big((1+|t|)^2K^{4\ve}\Big) \\
       &  \hskip 80pt = \left(\frac{K}{2}\right)^{2(s-\delta)}
             \left(\frac{K}{4} \int\limits_{0}^{\infty}\Phi(u)u^{2(s-\delta)}du + O_B\left(K^{-B}\right)\right)
             + \; \mathcal O\Big((1+|t|)^2K^{4\ve}\Big)\\
       & \hskip 80pt =  \left(\frac{K}{2}\right)^{2(s-\delta)} \frac{K}{4} \int\limits_{0}^{\infty}\Phi(u)u^{2(s-\delta)}du
             + \; \mathcal O\Big((1+|t|)^2K^{4\ve}\Big).
  \end{align*}

Hence we have
\[
  \begin{split}
    \cD & = \; \frac{\eta_{it}(\ell)}{\ell^{1/2+\delta}} \frac{1}{2\pi i}
            \int\limits_{\delta+\ve-iK^\ve}^{\delta+\ve+iK^\ve} \frac{\zeta(1+2s)}{(4\pi^2\ell)^{s-\delta}}
            \left[\widetilde{H}(s-\delta)+\widetilde{H}(s+\delta)\right] \\
        & \hskip 30pt \cdot \left[\left(\frac{K}{2}\right)^{2(s-\delta)} \frac{K}{4} \int\limits_{0}^{\infty}\Phi(u)u^{2(s-\delta)}du
             + \mathcal O\Big((1+|t|)^2 K^{4\ve}\Big)\right] ds
          + \; \mathcal O_{\ve,B}\Big(\ell^{-1/2-\delta}K^{-B}\Big) \\
        & = \; \frac{\eta_{it}(\ell)}{\ell^{1/2+\delta}} \frac{K}{4}
             \frac{1}{2\pi i} \int\limits_{\delta+\ve-iK^\ve}^{\delta+\ve+iK^\ve}
             \frac{\zeta(1+2s)}{(4\pi^2\ell)^{s-\delta}}
            \left[\widetilde{H}(s-\delta)+\widetilde{H}(s+\delta)\right]  \\
        & \hskip 30pt \cdot \left(\frac{K}{2}\right)^{2(s-\delta)}
             \int\limits_{0}^{\infty}\Phi(u)u^{2(s-\delta)} \;du\, ds
           \; + \; \mathcal O_\ve\Big((1+|t|)^2\ell^{-1/2-\delta}K^{4\ve}\Big)
          \;  + \; \mathcal O_{\ve,B}\Big(\ell^{-1/2-\delta}K^{-B}\Big) \\
        & = \; \frac{\eta_{it}(\ell)}{\ell^{1/2+\delta}} \frac{K}{4}
             \frac{1}{2\pi i} \int\limits_{\delta+\ve-i\infty}^{\delta+\ve-i\infty}\frac{\zeta(1+2s)}{(4\pi^2\ell)^{s-\delta}}
            \left[\widetilde{H}(s-\delta)+\widetilde{H}(s+\delta)\right]  \\
        & \hskip 80pt \cdot \left(\frac{K}{2}\right)^{2(s-\delta)}
             \int\limits_{0}^{\infty}\Phi(u)u^{2(s-\delta)} \;du \,ds
             +\; \mathcal  O_\ve\Big((1+|t|)^2\ell^{-1/2-\delta}K^{4\ve}\Big).
  \end{split}
\]
By shifting the contour to the line $\Re(s)=-1/2+\delta+\ve$, we obtain
\begin{equation}\label{eqn:cD=}
  \begin{split}
    \cD & = \zeta(1+2\delta) \frac{\eta_{it}(\ell)}{\ell^{1/2+\delta}} \frac{K}{4} \int\limits_{0}^{\infty}\Phi(u) \;du
           \;  + \; \zeta(1-2\delta) \frac{\eta_{it}(\ell)}{\ell^{1/2-\delta}} \left(\frac{K}{4\pi}\right)^{-4\delta}
              \frac{K}{4} \int\limits_{0}^{\infty} \Phi(u)u^{-4\delta}\;du \\
        & \hskip -5pt  +\;\mathcal  O\left(\frac{\tau(\ell ) K}{\ell^{1/2+\delta}}  \left | \,\int\limits_{-\frac12+\delta+\ve-i\infty}^{-\frac12+\delta+\ve+i\infty}
                \frac{\zeta(1+2s)}{(4\pi \ell)^{s-\delta}}
                \left[\widetilde{H}(s-\delta)+\widetilde{H}(s+\delta)\right]  \cdot \left(\frac{K}{2}\right)^{2(s-\delta)}
                 \int\limits_{0}^{\infty}\Phi(u)u^{2(s-\delta)}\; du \,ds\,\right |\;\right)\\
                 &
                  \hskip 200pt  + \; \mathcal O\Big((1+|t|)^2\ell^{-1/2-\delta}K^{4\ve}\Big) \\
        &\hskip -10pt = \zeta(1+2\delta) \frac{\eta_{it}(\ell)}{\ell^{1/2+\delta}} \frac{K}{4} \int\limits_{0}^{\infty}\Phi(u)\; du
           \;  + \; \zeta(1-2\delta) \frac{\eta_{it}(\ell)}{\ell^{1/2-\delta}} \left(\frac{K}{4\pi}\right)^{-4\delta}
              \frac{K}{4} \int\limits_{0}^{\infty} \Phi(u)u^{-4\delta}\;du  \\
                 &
                  \hskip 200pt  + \; \mathcal O_\ve\Big((1+|t|)^2\ell^{-\delta}K^{4\ve}\Big).
  \end{split}
\end{equation}

\subsection{The off-diagonal term}
We assume $t\neq0$. The case $t=0$, can be viewed  as the
limitation of the case $t\neq0$. Of course, we can also use
the Voronoi summation formula for $\tau(n)$, the divisor function,
(cf. Iwaniec--Kowalski \cite[eq. (4.49)]{iwaniec2004analytic})
instead of lemma \ref{lemma:VSF} to do the estimation.
Note that $|J_k(x)|\leq (x/2)^{k-1}$ for $k>4$, and $0<x<1$.
By lemma \ref{lemma:AFE}, we can truncate the sums over $c$
and $d$ in $\cF$ at $cd\leq \sqrt{\ell}K^{1+\ve}$ with a negligible error.
(Indeed, to prove this we consider two cases depending on the size of $nd^2$.
For the case $nd^2>K^{2+\ve}$, by lemma \ref{lemma:AFE}, we know
$V_{k,\delta+it}(nd^2)\ll \frac{1}{n^2d^4}K^{-B}$;
and then we just use the uniform bound
$J_{k-1}(\frac{4\pi\sqrt{\ell n}}{c})\ll k^{-1/3}$ when $c\leq 4\pi\sqrt{\ell n}$,
and we use the bound
$J_{k-1}(\frac{4\pi\sqrt{\ell n}}{c})\ll \frac{\ell n}{c^2}$ when $c>4\pi\sqrt{\ell n}$,
so we can bound the total contribution by $K^{-B'}$.
For the case $nd^2\leq K^{2+\ve}$, since $cd\geq\sqrt{\ell}K^{1+\ve}$,
we have $\frac{4\pi\sqrt{\ell n}}{c}\ll K^{-\ve/2}$.
So we have $J_{k-1}(\frac{4\pi\sqrt{\ell n}}{c})\ll \frac{\ell n}{c^2} K^{-B}$,
and again the contribution to $\cF$ will be bounded by $K^{-B'}$.)

Let $x=\frac{4\pi\sqrt{\ell n}}{c}$.
By lemma \ref{lemma:average-over-k}, we have
\[
  \begin{split}
      &  4\sum_{k\equiv2(4)}\Phi\left(\frac{k-1}{K}\right)  V_{k,\delta+it}(nd^2) J_{k-1}\left(x\right) \\
      & \hskip 30pt= \Phi\left(\frac{x}{K}\right) V_{x+1,\delta+it}(nd^2)
          \;  + \; \frac{K}{\sqrt{x}} \Im\left(e^{-2\pi i/8}e^{ix} \int\limits_{0}^{\infty}\frac{\Phi(\sqrt{u})}{\sqrt{2\pi u}} \, V_{\sqrt{u}K+1,\delta+it}(nd^2) \, e^{iu\frac{K^2}{2x}}\; du\right)  \\
      & \hskip 150pt + \; \mathcal O\left(\frac{x}{K^3}\int_{-\infty}^{\infty}|v|^3 \left|\int\limits_{0}^{\infty}\Phi(u) \, V_{uK+1,\delta+it}(nd^2) \, e^{iuv}\; du\right|dv\right).
  \end{split}
\]
Hence
\begin{equation}\label{eqn:FtoFE}
  \cF = \cF_1 + \cF_2 + \cE \; + \; \mathcal O\left(K^{-B}\right),
\end{equation}
where
 $$
     \cF_1  := -\frac{\pi}{2} \sum_{cd\leq \sqrt{\ell}K^{1+\ve}}
                \frac{1}{cd^{1+2\delta}}
             \underset{(a,c)=1}  {\sum_{a=1}^c}               e^{\frac{2\pi i a\ell}{c}}
                \sum_{n=1}^{\infty}  \frac{\eta_{it}(n) e^{\frac{2\pi i\bar{a}n}{c}} }{n^{1/2+\delta}} \Phi\bigg(\frac{4\pi\sqrt{\ell n}}{cK}\bigg)
                V_{\frac{4\pi\sqrt{\ell n}}{c}+1,\delta+it}(nd^2),
                $$

  \begin{align*}
     \cF_2 & := -\frac{\pi^{1/2}K}{4\ell^{1/4}} \sum_{cd\leq \sqrt{\ell}K^{1+\ve}}
                \frac{1}{c^{1/2}d^{1+2\delta}}
               \underset{(a,c)=1}  {\sum_{a=1}^c}
               e^{\frac{2\pi i a\ell}{c}}
                \sum_{n=1}^{\infty}
                \frac{\eta_{it}(n) e^{\frac{2\pi i \bar{a}n}{c}}}{n^{3/4+\delta}} \\
           &  \hskip 126pt \cdot
                \Im\left(e^{-2\pi i/8} \, e^{i\frac{4\pi\sqrt{\ell n}}{c}}
                \int\limits_{0}^{\infty}\frac{\Phi(\sqrt{u})}{\sqrt{2\pi u}} \,
                V_{\sqrt{u}K+1,\delta+it}(nd^2) \, e^{iu\frac{cK^2}{8\pi\sqrt{\ell n}}}\; du\right),
                \end{align*}

   $$  \cE   \ll \frac{\ell^{1/2}}{K^3} \sum_{d=1}^{\infty} \sum_{n=1}^{\infty}\frac{\tau(n)}{d^{1+2\delta}n^{\delta}}
         \sum_{\substack{c\leq \sqrt{\ell}K^{1+\ve}/d}}\frac{|S(n,\ell;c)|}{c^2}  \cdot \int\limits_{-\infty}^{\infty}|v|^3 \left|\int\limits_{0}^{\infty}\Phi(u) \,V_{uK+1,\delta+it}(nd^2) \,e^{iuv}\; du\right|dv.
$$

\subsubsection{The estimate of $\cF_1$}

As in lemma \ref{lemma:AFE}, by Stirling's formula, for $x\asymp K$ we have
\[
  \begin{split}
    V_{x+1,\delta+it}(y)  & = \frac{1}{2\pi i} \int\limits_{\alpha-i\infty}^{\alpha+i\infty}
        \frac{\widetilde{H}(s+\delta)+\widetilde{H}(s-\delta)}{(16\pi^2)^{s-\delta}}
        \left(\frac{x^2}{y}\right)^{s-\delta}\left(1+\frac{2(s^2-\delta^2)}{x}\right) ds\\
                 &  \hskip 200pt
                  + \; \mathcal  O\left((1+|t|)^4y^{-\ve}K^{-2+\ve}\right)
  \end{split}
\]
for any $\alpha>|\delta|$.
By lemma \ref{lemma:AFE}, we can restrict the $c$-sum and $d$-sum
to $cd\leq \sqrt{\ell}K^{\ve}$ with a negligible error.
(Indeed, since $h$ has compact support, we only need to sum over $n$
which satisfies the conditon $\sqrt{\ell n}/(cK)\asymp1$.
So for the sum over $c,d$ with $cd\geq \sqrt{\ell}K^{\ve}$,
we have $nd^2\asymp (cdK)^2/\ell \gg K^{2+2\ve}$.
Note that by lemma \ref{lemma:AFE}, we have
$V_{\frac{4\pi\sqrt{\ell n}}{c}+1,\delta+it}(nd^2) \ll (K^2/(nd^2))^{B} \ll K^{-B'}(nd^2)^{-3}$.
Hence this contributes to $\cF_1$ with $O(K^{-B''})$ for any $B''>0$,
here the implied constant depends on both $\ve$ and $B''$.)
And then by using Weil's bound for the Kloosterman sum,
the contribution of the error term of
$V_{\frac{4\pi\sqrt{\ell n}}{c}+1,\delta+it}(nd^2)$
as above to $\cF_1$ is
\begin{equation*}
  \begin{split}
    & \ll (1+|t|)^4 K^{-2+\ve} \sum_{d=1}^{\infty}\frac{1}{d^{1+2\delta}}
        \sum_{\substack{c\\ cd\ll \sqrt{\ell}K^{\ve}}} \frac{(c,\ell)^{1/2}}{c^{1/2}}
        \sum_{n\asymp c^2K^2/\ell} \frac{\tau(n)}{n^{1/2+\delta}} \\
    & \ll (1+|t|)^4 \ell^{-1/2}K^{-1+\ve} \sum_{d\leq\sqrt{\ell}K^\ve}\frac{1}{d^{1+2\delta}}
        \sum_{\substack{c\ll \sqrt{\ell}K^{\ve}}} (c,\ell)^{1/2}c^{1/2} \\
    & \ll (1+|t|)^4 \ell^{1/4}K^{-1+2\ve} \sum_{c\ll \sqrt{\ell}K^{\ve}} (c,\ell)^{1/2} \\
    & \ll (1+|t|)^4 \ell^{1/4}K^{-1+2\ve} \sum_{c\ll \sqrt{\ell}K^{\ve}} (c,\ell)\\
    & \ll (1+|t|)^4 \ell^{1/4}K^{-1+2\ve} \sum_{d|\ell}d\sum_{\substack{d|c\\ c\ll \sqrt{\ell}K^{\ve}}}1
      \ll (1+|t|)^4 \ell^{1/4}K^{-1+3\ve}.
  \end{split}
\end{equation*}
Hence we have
\[
  \begin{split}
    \cF_1 & = -\frac{\pi}{2} \sum_{cd\leq \sqrt{\ell}K^\ve}
                \frac{1}{cd^{1+2\delta}}
                 \underset{(a,c)=1}  {\sum_{a=1}^c}               e^{\frac{2\pi i a\ell}{c}}
                \sum_{n=1}^{\infty}\eta_{it}(n) e^{\frac{2\pi i\bar{a}n}{c}} g_{c,d}(n)
            \;   + \; \mathcal O\Big((1+|t|)^4 \ell^{1/4}K^{-1+3\ve}\Big) \\
          & = -\frac{\pi}{2} \sum_{d=1}^{\infty}\sum_{c=1}^{\infty}
                \frac{1}{cd^{1+2\delta}}
               \underset{(a,c)=1}  {\sum_{a=1}^c}               e^{\frac{2\pi i a\ell}{c}}
                \sum_{n=1}^{\infty}\eta_{it}(n) e^{\frac{2\pi i\bar{a}n}{c}} g_{c,d}(n)
            \;   + \; \mathcal O\Big((1+|t|)^4 \ell^{1/4}K^{-1+3\ve}\Big),
  \end{split}
\]
where
\[
  \begin{split}
    g_{c,d}(y)  := \frac{1}{y^{1/2+\delta}} \Phi\left(\frac{4\pi\sqrt{\ell y}}{cK}\right)
            \frac{1}{2\pi i} \int\limits_{3-i\infty}^{3+i\infty} \left[\widetilde{H}(s+\delta)+\widetilde{H}(s-\delta)\right]
        \left(\frac{\ell}{c^2d^2}\right)^{s-\delta}
        \left(1+\frac{c(s^2-\delta^2)}{2\pi\sqrt{\ell y}}\right) ds.
  \end{split}
\]
By lemma \ref{lemma:VSF}, we have
\begin{equation}\label{eqn:F1toMJK1}
  \cF_1 = \cM_1 + \cJ_1 + \cK_1 + \mathcal{O}\Big((1+|t|)^4 \ell^{1/4}K^{-1+3\ve}\Big),
\end{equation}
where
\begin{equation*}
  \begin{split}
    \cM_1 & := -\frac{\pi}{2}  \zeta(1-2it) \sum_{d=1}^{\infty}\sum_{c=1}^{\infty}
                \frac{S(0,\ell;c)}{c^{2-2it}d^{1+2\delta}}  \int\limits_{0}^{\infty}g_{c,d}(y)y^{-it}dy \\
          & \hskip 100pt -\frac{\pi}{2} \zeta(1+2it) \sum_{d=1}^{\infty}\sum_{c=1}^{\infty}
                \frac{S(0,\ell;c)}{c^{2+2it}d^{1+2\delta}} \int\limits_{0}^{\infty}g_{c,d}(y)y^{it}dy, \\
    \cJ_1 & := -\frac{\pi}{2} \sum_{d=1}^{\infty}\sum_{c=1}^{\infty} \sum_{n=1}^{\infty}
                \frac{\eta_{it}(n)S(0,\ell-n;c)}{c^2d^{1+2\delta}}
                \int\limits_{0}^{\infty} g_{c,d}(y) J_{2it}^+\left(\frac{4\pi\sqrt{ny}}{c}\right)dy, \\
    \cK_1 & := -\frac{\pi}{2} \sum_{d=1}^{\infty}\sum_{c=1}^{\infty} \sum_{n=1}^{\infty}
                \frac{\eta_{it}(n)S(0,\ell+n;c)}{c^2d^{1+2\delta}}
                \int\limits_{0}^{\infty} g_{c,d}(y) K_{2it}^+\left(\frac{4\pi\sqrt{ny}}{c}\right)dy.
  \end{split}
\end{equation*}

We first deal with $\cM_1$. Note that
\[
  \cM_1 = -\pi \Re\left\{ \zeta(1-2it) \sum_{d=1}^{\infty}\sum_{c=1}^{\infty}
     \frac{S(0,\ell;c)}{c^{2-2it}d^{1+2\delta}}  \int\limits_{0}^{\infty}g_{c,d}(y)y^{-it}dy \right\}.
\]
Introducing the definition of $g_{c,d}(y)$, and making a substitution
$u=\frac{4\pi\sqrt{\ell y}}{cK}$, we have
\[
  \begin{split}
    \cM_1 & = -\pi \Re\left \{ \zeta(1-2it)
            \sum_{d=1}^{\infty}\sum_{c=1}^{\infty}
            \frac{S(0,\ell;c)}{c^{2-2it}d^{1+2\delta}}
            \int\limits_{0}^{\infty}\frac{1}{y^{1/2+\delta+it}}
            \Phi\left(\frac{4\pi\sqrt{\ell y}}{cK}\right) \right.\\
          & \hskip 70pt
          \left.
          \cdot \frac{1}{2\pi i} \int\limits_{3-i\infty}^{3+i\infty}
             \left[\widetilde{H}(s+\delta)+\widetilde{H}(s-\delta)\right]
             \left(\frac{\ell}{c^2d^2}\right)^{s-\delta}
             \left(1+\frac{c(s^2-\delta^2)}{2\pi\sqrt{\ell y}}\right)\, ds\, dy \right\} \\
          & = -\pi \Re\left \{ \zeta(1-2it) \frac{2K^{1-2\delta-2it}}{(4\pi)^{1-2\delta-2it}\ell^{1/2-\delta-it}}
            \int\limits_{0}^{\infty}\Phi(u)u^{-2\delta-2it} \right.  \\
          & \hskip 30pt \left.
           \cdot \frac{1}{2\pi i} \int\limits_{3-i\infty}^{3+i\infty}
             \big[\widetilde{H}(s+\delta)+\widetilde{H}(s-\delta)\big]
             \ell^{s-\delta}
             \bigg(1+\frac{2(s^2-\delta^2)}{Ku}\bigg)
             \sum_{d=1}^{\infty}\sum_{c=1}^{\infty}
            \frac{S(0,\ell;c)}{c^{1+2s}d^{1+2s}} ds \,du \right \}.
  \end{split}
\]
Note that for $\Re(s)>0$ and $\ell\geq1$ we have
\begin{equation}\label{eqn:GenSer-Ram}
  \sum_{d=1}^{\infty}\sum_{c=1}^{\infty}
  \frac{S(0,\ell;c)}{c^{1+2s}d^{1+2s}}
 \; = \; \ell^{-s}\eta_s(\ell).
\end{equation}
Hence we get
\[
  \begin{split}
    \cM_1 & = -\pi \Re\left \{ \zeta(1-2it) \frac{2K^{1-2\delta-2it}}{(4\pi)^{1-2\delta-2it}\ell^{1/2-it}}
            \int\limits_{0}^{\infty}\Phi(u)u^{-2\delta-2it} \right.  \\
          & \hskip 80pt \left.
           \cdot \frac{1}{2\pi i} \int\limits_{3-i\infty}^{3+i\infty}
             \left[\widetilde{H}(s+\delta)+\widetilde{H}(s-\delta)\right] \eta_s(\ell)
             \left(1+\frac{2(s^2-\delta^2)}{Ku}\right) ds \, du \right\}.
  \end{split}
\]
For the innermost integral, we change $s$ to $-s$.
Recall that $\widetilde{H}(-s)=-\widetilde{H}(s)$ and $\eta_s(\ell)=\eta_{-s}(\ell)$.
So that for any $u\asymp1$ we have
  \begin{align*}
       & \frac{1}{2\pi i} \int\limits_{3-i\infty}^{3+i\infty}
             \left[\widetilde{H}(s+\delta)+\widetilde{H}(s-\delta)\right] \eta_s(\ell)
             \left(1+\frac{2(s^2-\delta^2)}{Ku}\right) ds \\
       &\hskip 80pt = -\frac{1}{2\pi i} \int\limits_{-3+i\infty}^{-3-i\infty}
             \left[\widetilde{H}(-s+\delta)+\widetilde{H}(-s-\delta)\right] \eta_{-s}(\ell)
             \left(1+\frac{2(s^2-\delta^2)}{Ku}\right) ds\\
       & \hskip 80pt = -\frac{1}{2\pi i} \int\limits_{-3-i\infty}^{-3+i\infty}
             \left[\widetilde{H}(s+\delta)+\widetilde{H}(s-\delta)\right] \eta_{s}(\ell)
             \left(1+\frac{2(s^2-\delta^2)}{Ku}\right) ds \\
       & \hskip 80pt = \frac{1}{2}\frac{1}{2\pi i} \left( \int\limits_{3-i\infty}^{3+i\infty} -  \int\limits_{-3-i\infty}^{-3+i\infty}\right)
             \left[\widetilde{H}(s+\delta)+\widetilde{H}(s-\delta)\right] \eta_{s}(\ell)
             \left(1+\frac{2(s^2-\delta^2)}{Ku}\right) ds\\
             &\\
       & \hskip 80pt = \eta_\delta(\ell).
  \end{align*}

Therefore, we get
\begin{equation}\label{eqn:cM_1=}
  \begin{split}
    \cM_1 & = - \frac{1}{2}\Re \left\{ \zeta(1-2it)\eta_\delta(\ell) \frac{K^{1-2\delta-2it}}{(4\pi)^{-2\delta-2it}\ell^{1/2-it}}
            \int\limits_{0}^{\infty}\Phi(u)u^{-2\delta-2it} du \right\}. \\
  \end{split}
\end{equation}

To handle the $\cJ_1$ and $\cK_1$ terms, we need some asymptotic evaluations
regarding the Bessel functions
(cf. Hough \cite[eq. (3.5) and (3.6)]{hough2012zero})
\begin{equation}\label{eqn:J&K}
  \begin{split}
     J_\nu^+(x) & = -\sqrt{\frac{2\pi}{x}}\sin(x-\pi/4)\left[1-\frac{16\nu^4-40\nu^2+9}{128x^2}\right] \\
                & \hskip 90pt - \pi \cos(x-\pi/4)\frac{\nu^2-1/4}{2x} \; + \; \mathcal O\left(\frac{1+|\nu|^6}{x^3}\right), \\
     K_\nu(x)   & = \sqrt{\frac{\pi}{2x}} e^{-x} \left[1+ \mathcal O\left(\frac{1+|\nu|^2}{x}\right)\right].
  \end{split}
\end{equation}
Substituting $u=\frac{4\pi\sqrt{\ell y}}{cK}$ we obtain
\[
  \begin{split}
    \cJ_1 & = -\pi \left(\frac{K}{4\pi}\right)^{1-2\delta} \frac{1}{\ell^{1/2-\delta}}
                \sum_{d=1}^{\infty}\sum_{c=1}^{\infty} \sum_{n=1}^{\infty}
                \frac{\eta_{it}(n)S(0,\ell-n;c)}{c^{1+2\delta}d^{1+2\delta}}
                \int\limits_{0}^{\infty}  \Phi(u) u^{-2\delta}
                J_{2it}^+\left(\frac{\sqrt{n}Ku}{\sqrt{\ell}}\right) \\
          & \hskip 90pt \cdot \frac{1}{2\pi i}
           \int\limits_{3-i\infty}^{3+i\infty} \left[\widetilde{H}(s+\delta)+\widetilde{H}(s-\delta)\right]
              \left(\frac{\ell}{c^2d^2}\right)^{s-\delta}\left(1+\frac{2(s^2-\delta^2)}{Ku}\right) ds \, du, \\
    \cK_1 & = -\pi \left(\frac{K}{4\pi}\right)^{1-2\delta} \frac{1}{\ell^{1/2-\delta}}
                \sum_{d=1}^{\infty}\sum_{c=1}^{\infty} \sum_{n=1}^{\infty}
                \frac{\eta_{it}(n)S(0,\ell+n;c)}{c^{1+2\delta}d^{1+2\delta}}
                \int\limits_{0}^{\infty}  \Phi(u) u^{-2\delta}
                K_{2it}^+\left(\frac{\sqrt{n}Ku}{\sqrt{\ell}}\right) \\
          & \hskip 90pt \cdot \frac{1}{2\pi i}   \int\limits_{3-i\infty}^{3+i\infty}\left[\widetilde{H}(s+\delta)+\widetilde{H}(s-\delta)\right]
              \left(\frac{\ell}{c^2d^2}\right)^{s-\delta}\left(1+\frac{2(s^2-\delta^2)}{Ku}\right) ds \, du.
  \end{split}
\]
Since $S(0,0;c)=\phi(c)$ where $\phi$ is the Euler function,
and
\begin{equation}\label{eqn:GenSer-Eul}
  \sum_{d=1}^{\infty}\sum_{c=1}^{\infty}
  \frac{\phi(c)}{c^{1+2s}d^{1+2s}}
  = \zeta(2s), \quad \quad  (\textrm{for}\;\Re(s)>1/2),
\end{equation}
together with \eqref{eqn:GenSer-Ram}, we have
$$\cJ_1  = \cJ_{11}+\cJ_{12},$$
and
\begin{equation} \label{eqn:cJ_1=}
  \begin{split}
    \cK_1 & = -\pi \left(\frac{K}{4\pi}\right)^{1-2\delta} \frac{1}{\ell^{1/2-\delta}}
                \sum_{n=1}^{\infty} \eta_{it}(n)  \int\limits_{0}^{\infty}  \Phi(u) u^{-2\delta}
                K_{2it}^+\left(\frac{\sqrt{n}Ku}{\sqrt{\ell}}\right) \\
          & \qquad\qquad \cdot \frac{1}{2\pi i}  \int\limits_{3-i\infty}^{3+i\infty} \big[\widetilde{H}(s+\delta)+\widetilde{H}(s-\delta)\big]
              \frac{\ell^{s-\delta}}{(\ell+n)^{s}}\eta_{s}(\ell+n)
              \bigg(1+\frac{2(s^2-\delta^2)}{Ku}\bigg) ds \, du,
  \end{split}
\end{equation}
where
\begin{equation*}
  \begin{split}
    \cJ_{11} & := -\pi \left(\frac{K}{4\pi}\right)^{1-2\delta} \frac{1}{\ell^{1/2-\delta}}
                \sum_{n\neq \ell} \eta_{it}(n) \int\limits_{0}^{\infty}  \Phi(u) u^{-2\delta}
                J_{2it}^+\left(\frac{\sqrt{n}Ku}{\sqrt{\ell}}\right) \\
          & \qquad\qquad \cdot \frac{1}{2\pi i}  \int\limits_{3-i\infty}^{3+i\infty} \left[\widetilde{H}(s+\delta)+\widetilde{H}(s-\delta)\right]
              \ell^{s-\delta}\sum_{a|(\ell-n)}\frac{1}{a^{2s}}\left(1+\frac{2(s^2-\delta^2)}{Ku}\right) ds \, du, \\
    \cJ_{12} & := -\pi \left(\frac{K}{4\pi}\right)^{1-2\delta} \frac{\eta_{it}(\ell)}{\ell^{1/2-\delta}}
                 \int\limits_{0}^{\infty}  \Phi(u) u^{-2\delta} J_{2it}^+(Ku) \\
          & \qquad\qquad \cdot \frac{1}{2\pi i} \int\limits_{3-i\infty}^{3+i\infty} \left[\widetilde{H}(s+\delta)+\widetilde{H}(s-\delta)\right]
              \ell^{s-\delta}\zeta(2s)\left(1+\frac{2(s^2-\delta^2)}{Ku}\right) ds \, du.
  \end{split}
\end{equation*}
Note that $\ell\leq K^{2-2\vartheta}$ and the $K$-Bessel function is exponentially small for large variable.
It follows that $\cK_1$ is extremely small.

Now we consider $\cJ_{11}$.  By shifting the contour of the
integral in $\cJ_{11}$ to $\Re(s)=\delta+\ve$, the contribution coming from
the error of the asymptotic expansion of
$J_{2it}^+\left(\frac{\sqrt{n}Ku}{\sqrt{\ell}}\right)$ is
\begin{equation}\label{eqn:cJ_110<<}
  \begin{split}
     &  \ll (1+|t|)^6 \left(\frac{K}{\sqrt{\ell}}\right)^{1-2\delta}
                \sum_{n\neq \ell} \tau(n)  \int\limits_{0}^{\infty}  \Phi(u) u^{-2\delta}
                \left(\frac{\sqrt{n}Ku}{\sqrt{\ell}}\right)^{-3} \\
     & \qquad\qquad\cdot \left|\frac{1}{2\pi i} \int\limits_{\delta+\ve-i\infty}^{\delta+\ve+i\infty} \left[\widetilde{H}(s+\delta)+\widetilde{H}(s-\delta)\right]
              \ell^{s-\delta}\sum_{a|(\ell-n)}\frac{1}{a^{2s}}
              \left(1+\frac{2(s^2-\delta^2)}{Ku}\right) ds\right|\, du,\\
     & \ll (1+|t|)^6 \ell K^{-2+\ve}. 
  \end{split}
\end{equation}
And similarly, by shifting the contour of the
integral in $\cJ_{12}$ to $\Re(s)=1/2+\ve$, the contribution coming from
the error of the asymptotic expansion of
$J_{2it}^+(Ku)$ is $\ll (1+|t|)^6 \ell^{-1/2}K^{-2+\ve}$.
We only show how to bound the contribution from the main term
\begin{equation}\label{eqn:J^+1}
  -\left({\frac{2\pi}{Ku}}\right)^{1/2}\left(\frac{\ell}{n}\right)^{1/4}
  \sin\left(Ku\left(\frac{n}{\ell}\right)^{1/2}-\frac{\pi}{4}\right)
\end{equation}
to $\cJ_{11}$; the rest of the main terms can be handled in the same way,
and their contribution is smaller.
The contribution to $\cJ_{11}$ from integrating
against \eqref{eqn:J^+1} is
\[
  \begin{split}
     \cJ_{111}  & :=  \pi \left(\frac{K}{4\pi}\right)^{1-2\delta} \frac{1}{\ell^{1/2-\delta}}
                \left({\frac{2\pi}{K}}\right)^{1/2}\ell^{1/4}
                \sum_{n\neq \ell} \frac{\eta_{it}(n)}{n^{1/4}} \int\limits_{0}^{\infty}  \Phi(u) u^{-1/2-2\delta}
                \sin\left(K\left(\frac{n}{\ell}\right)^{1/2}u-\frac{\pi}{4}\right) \\
       & \hskip 80pt\cdot \frac{1}{2\pi i}
       \int\limits_{3-i\infty}^{3+i\infty} \left[\widetilde{H}(s+\delta)+\widetilde{H}(s-\delta)\right]
              \ell^{s-\delta}\sum_{a|(\ell-n)}\frac{1}{a^{2s}}\left(1+\frac{2(s^2-\delta^2)}{Ku}\right) ds \,du.
  \end{split}
\]
Define
$$
  g_n(u):=\frac{1}{2\pi i}  \int\limits_{3-i\infty}^{3+i\infty} \left[\widetilde{H}(s+\delta)+\widetilde{H}(s-\delta)\right]
              \ell^{s-\delta}\sum_{a|(\ell-n)}\frac{1}{a^{2s}}\left(1+\frac{2(s^2-\delta^2)}{Ku}\right) ds.
$$
Then by integrating by parts twice, we obtain
\[
  \begin{split}
     \cJ_{111}  & \ll  \left(\frac{K}{\sqrt{\ell}}\right)^{1/2-2\delta}
                \sum_{n\neq \ell} \frac{\tau(n)}{n^{1/4}}\; \left|\int\limits_{0}^{\infty}  \Phi(u) u^{-1/2-2\delta}g_n(u)
                \sin\left(K\left(\frac{n}{\ell}\right)^{1/2}u-\frac{\pi}{4}\right)du \,\right| \\
                & \ll  \left(\frac{K}{\sqrt{\ell}}\right)^{-3/2}
                \sum_{n\neq \ell} \frac{\tau(n)}{n^{5/4}} \;\left|\int\limits_{0}^{\infty}
                \left( \Phi(u) u^{-1/2-2\delta}g_n(u)\right)''
                \sin\left(K\left(\frac{n}{\ell}\right)^{1/2}u-\frac{\pi}{4}\right)du \,\right|.
  \end{split}
\]
By shifting the contour of the integral in $g_n(u)$ to the line $\Re(s)=\delta+\ve$,
and then bounding everything trivially, we get
\begin{equation}\label{eqn:cJ_111<<}
  \cJ_{111} \ll \ell^{3/4}K^{-3/2+\ve}.
\end{equation}
Hence, by \eqref{eqn:F1toMJK1}, \eqref{eqn:cM_1=}, \eqref{eqn:cJ_1=}, \eqref{eqn:cJ_110<<},
and \eqref{eqn:cJ_111<<}, we obtain that
\begin{equation}\label{eqn:cF_1=}
  \begin{split}
    \cF_1 & = - \frac{1}{2}\Re \left\{ \zeta(1-2it)\eta_\delta(\ell)
            \frac{K^{1-2\delta-2it}}{(4\pi)^{-2\delta-2it}\ell^{1/2-it}}
            \int\limits_{0}^{\infty}\Phi(u)u^{-2\delta-2it} du \right\} \\
          & \hskip 100pt + \mathcal O\left((1+|t|)^4\ell^{1/4}K^{-1+3\ve}+\ell^{3/4}K^{-3/2+\ve} + (1+|t|)^6 \ell K^{-2+\ve}\right).
  \end{split}
\end{equation}

\subsubsection{The treatment of $\cF_2$ and $\cE$}
Define
\[
  \cW_{K}(y,v)= \cW_{K}(y,v,\Phi) := \int\limits_{0}^{\infty}\frac{\Phi(\sqrt{u})}{\sqrt{2\pi u}}\, V_{\sqrt{u}K+1,\delta+it}(y) \,e^{iuv} \;du,
\]
for $y\geq1$ and $0<v<K^5$.
Using the integral definition of $V_{k,\delta+it}$ we have
\[
  \begin{split}
    \cW_{K}(y,v) & = \frac{1}{2\pi i} \int\limits_{\alpha-i\infty}^{\alpha+i\infty}
        \frac{\widetilde{H}(s+\delta)+\widetilde{H}(s-\delta)}{(4\pi^2y)^{s-\delta}} \\
       &\qquad\qquad\qquad \cdot  \int\limits_{0}^{\infty} \,\frac{\Gamma(s+\frac{\sqrt{u}K+1}{2}+it)\Gamma(s+\frac{\sqrt{u}K+1}{2}-it)}
        {\Gamma(\delta+\frac{\sqrt{u}K+1}{2}+it)\Gamma(\delta+\frac{\sqrt{u}K+1}{2}-it)} \,
        \frac{\Phi(\sqrt{u})}{\sqrt{2\pi u}} \,e^{iuv} \;du\, ds,
  \end{split}
\]
for any $\alpha>|\delta|$. Taking $\alpha=A+\delta$ for any
large number $A>1$. For $|\Im(s)|\geq K^\ve$, then by the properties
of $\widetilde{H}$ we have $\cW_K(y,v)\ll \big(\frac{K^2}{y}\big)^A K^{-B}$ for
any $B>0$. And for $|\Im(s)|\leq K^\ve$, by integrating by parts several times
and asymptotic formulas for Gamma function and the polygamma functions,
we have
\[
  \int\limits_{0}^{\infty} \, \frac{\Gamma(s+\frac{\sqrt{u}K+1}{2}+it)\Gamma(s+\frac{\sqrt{u}K+1}{2}-it)}
        {\Gamma(\delta+\frac{\sqrt{u}K+1}{2}+it)\Gamma(\delta+\frac{\sqrt{u}K+1}{2}-it)}
      \,  \frac{\Phi(\sqrt{u})}{\sqrt{2\pi u}} \,e^{iuv} \;du \;
  \ll_{A,B} \; \frac{1+|s|}{v^B}K^{2A}.
\]
Hence we obtain
\begin{equation}\label{eqn:W_k<<}
  \cW_K(y,v) \;\ll_{A,B} \; \left(\frac{K^2}{y}\right)^A v^{-B}.
\end{equation}
Thus we can truncate the sum over $n$ in $\cF_2$
at $nd^2\leq K^{2+\ve}$ with a negligible error.
Note that $\ell\leq K^{2-2\vartheta}$, so we have
$\frac{cK^2}{8\pi\sqrt{\ell n}}\gg K^{\vartheta/2}$.
Hence by \eqref{eqn:W_k<<}, for any $B>0$ we get
\begin{equation}\label{eqn:cF_2<<}
  \cF_2  \;\ll_B \; K^{-B}.
\end{equation}
Similarly, we can show that
\[
  \int\limits_{-\infty}^{\infty}|v|^3 \left|\int\limits_{0}^{\infty}\Phi(u)V_{uK+1,\delta+it}(y) e^{iuv}du\right|dv
\;  \ll_{A} \; \left(\frac{K^2}{y}\right)^A.
\]
So we can truncate the sum over $n$ and $d$ in $\cE$ at $nd^2\leq K^{2+\ve}$
with a negligible error. Hence we get
\begin{equation}\label{eqn:cE<<}
  \cE \; \ll \; \frac{\ell^{1/2}}{K^3} \sum_{d=1}^{\infty}
         \sum_{nd^2\leq K^{2+\ve}}^{\infty}\frac{\tau(n)}{d^{1+2\delta}n^{\delta}}
         \sum_{\substack{c\\cd\leq \sqrt{\ell}K^{1+\ve}}}\frac{|S(n,\ell;c)|}{c^2}\;
         +\; \mathcal  O\left(K^{-B}\right)
     \; \ll \; \ell^{1/2}K^{-1+2\ve}.
\end{equation}
Combining \eqref{eqn:FtoFE}, \eqref{eqn:cF_1=}, \eqref{eqn:cF_2<<}, and \eqref{eqn:cE<<}, we have
\begin{equation}\label{eqn:cF=}
  \begin{split}
    \cF & = - \frac{1}{2}\Re \left\{ \zeta(1-2it)\eta_\delta(\ell)
            \frac{K^{1-2\delta-2it}}{(4\pi)^{-2\delta-2it}\ell^{1/2-it}}
            \int\limits_{0}^{\infty}\Phi(u)u^{-2\delta-2it} du \right\} \; + \; \mathcal O\left((1+|t|)^4\ell^{1/2}K^{-1+\ve}\right).
  \end{split}
\end{equation}
Using \eqref{eqn:D+F}, \eqref{eqn:cD=}, and \eqref{eqn:cF=},
this completes the proof of theorem \ref{thm:tsm}.

\section{Mollification near the critical point}\label{sec:msm}

\subsection{Choosing the mollifier}

We will take the same mollifier as in Hough \cite[\S5]{hough2012zero}.
Let
$$L(s,f)^{-1} \; := \;\sum_{n=1}^\infty \frac{a_f(n)}{n^s}, \qquad (\Re(s) > 1).$$
 The coefficients $a_f(n)$  are supported
on cube-free numbers. Now, for $m,n$ square-free, $(m,n)=1$,
we have $a_f(mn^2)=\mu(m)\lambda_f(m)$.

Fix $0 < \Upsilon < 1$ and  $M\leq K^{1-2\vartheta}$. Define
\begin{equation*}
  F_{\Upsilon,M}(x) := \left\{\begin{array}{ll}
                   1, & \textrm{if}\ 0\leq x\leq M^{1-\Upsilon}, \\
                   P\left(\frac{\log(M/x)}{\log M}\right), & \textrm{if}\ M^{1-\Upsilon}\leq x\leq M, \\
                   0, & \textrm{if}\ x\geq M,
                 \end{array}\right.
\end{equation*}
with $P(t)$, a degree 3 polynomial, that satisfies $P(\Upsilon)=1$ and $P'(\Upsilon)=P(0)=P'(0)=0$.

 We define the mollifier for
$L(s,f)$ by
\begin{equation}\label{eqn:M(s)}
  M(s,f) := \sum_{n=1}^{\infty} \frac{a_f(n)F_{\Upsilon,M}(\rad(n))}{n^s}
          = \sum_{\ell=1}^{\infty} \frac{x_\ell(s)}{\ell^s} \lambda_f(\ell),
\end{equation}
where $\rad(n)$ denotes the   product of the distinct prime numbers dividing $n$ and
\begin{equation*}
  x_\ell(s) := \sum_{n=1}^{\infty}\frac{\mu^2(\ell n)\mu(\ell)F_{\Upsilon,M}(\ell n)}{n^{2s}}.
\end{equation*}

We now set $\omega:=\delta+it$.
By \eqref{eqn:HR}, we have
\begin{equation}\label{eqn:|M|^2}
  \begin{split}
     |M(1/2+\omega,f)|^2
       & =  \sum_{\ell_1=1}^\infty\sum_{\ell_2=1}^\infty \frac{x_{\ell_1}(1/2+\omega)x_{\ell_2}(1/2+\bar{\omega})}
            {\ell_1^{1/2+\omega}\ell_2^{1/2+\bar{\omega}}}
            \lambda_{f}(\ell_1)\lambda_f(\ell_2) \\
       & =  \sum_{\ell_1=1}^\infty\sum_{\ell_2=1}^\infty  \sum_{d=1}^\infty \frac{1}{d^{1+\omega+\bar{\omega}}} \frac{x_{d\ell_1}(1/2+\omega)x_{d\ell_2}(1/2+\bar{\omega})}
            {\ell_1^{1/2+\omega}\ell_2^{1/2+\bar{\omega}}}
            \lambda_{f}(\ell_1\ell_2).
  \end{split}
\end{equation}
We may always write $L(s,f)M(s,f)$ as $LM(s,f)$.
Hence we get
  \begin{align*}
    &  \cA\Big(\left\{|LM(1/2+\omega,f)|^2\right\};\; K, \;\Phi\Big)\\
    & \hskip 30pt=  \sum_{\ell_1=1}^\infty\sum_{\ell_2=1}^\infty  \sum_{d=1}^\infty \frac{1}{d^{1+\omega+\bar{\omega}}}
            \frac{x_{d\ell_1}(1/2+\omega)x_{d\ell_2}(1/2+\bar{\omega})}
            {\ell_1^{1/2+\omega}\ell_2^{1/2+\bar{\omega}}}
            \cA\Big(\left\{\lambda_f(\ell_1\ell_2)|L(1/2+\omega,f)|^2\right\}; \; K, \;\Phi\Big).
  \end{align*}

There are three cases we need to consider when  $\omega=\delta+it$:
\begin{equation*}
  \mathbf{(I)}\left\{\begin{split}
       & \frac{-B}{\log K}\leq \delta \leq \frac{C\log\log K}{\log K}
       \quad\textrm{and}\quad |\delta|\geq \frac{C}{\log K \log\log K}, \\
       & \frac{C}{\log K \log\log K}\leq |t| \leq \frac{C\log\log K}{\log K};
    \end{split}\right. 
\end{equation*}
\begin{equation*}
  \mathbf{(II)}\left\{\begin{split}
       & |\delta|\leq \frac{C}{\log K \log\log K}, \\
       & \frac{C_1}{\log K} \leq |t| \leq \frac{C_2}{\log K};
    \end{split}\right. \qquad \qquad\qquad\qquad
    \mathbf{(III)}\left\{\begin{split}
       & \frac{A}{\log K} \leq |\delta|\leq \frac{C\log\log K}{\log K} , \\
       & |t|\leq \frac{C}{\log K \log\log K},
    \end{split}\right.
\end{equation*}
where $A$, $B$, and $C$ are some constants.
We will focus on the first case $ \mathbf{(I)}$, which we will assume
in this section from now on. Note that the other cases
can be handled by combining the method of
Conrey--Soundararajan \cite[\S6]{conrey2002real}.
Define
\begin{equation}\label{eqn:nu}
  \nu_{(\alpha,\beta)}(\ell):= \frac{\eta_{(\alpha-\beta)/2}(\ell)}{\ell^{(\alpha+\beta)/2}},
\end{equation}
and
\begin{equation}\label{eqn:ve}
  \ve(\alpha,\beta) := \left\{\begin{array}{ll}
                               \hskip 18pt 1, & \qquad \mathrm{if}\ (\alpha,\beta)=(\omega,\bar{\omega}), \\
                                -\left(\frac{K}{4\pi}\right)^{-2\omega}, & \qquad\mathrm{if}\ (\alpha,\beta)=(-\omega,\bar{\omega}), \\
                                -\left(\frac{K}{4\pi}\right)^{-2\bar{\omega}}, & \qquad\mathrm{if}\ (\alpha,\beta)=(\omega,-\bar{\omega}), \\
                              \hskip 10pt  \left(\frac{K}{4\pi}\right)^{-2\omega-2\bar{\omega}}, & \qquad\mathrm{if}\ (\alpha,\beta)=(-\omega,-\bar{\omega}).
                              \end{array}\right.
\end{equation}
By theorem \ref{thm:tsm}, we obtain
\begin{equation}\label{eqn:AtoS}
  \begin{split}
    &\cA\Big(\left\{|LM(1/2+\omega,f)|^2\right\};\; K, \;\Phi\Big) \\
    & =  \sum_{\ell_1=1}^\infty\sum_{\ell_2=1}^\infty  \sum_{d=1}^\infty \frac{1}{d^{1+\omega+\bar{\omega}}}
            \frac{x_{d\ell_1}(1/2+\omega)x_{d\ell_2}(1/2+\bar{\omega})}
            {\ell_1^{1/2+\omega}\ell_2^{1/2+\bar{\omega}}} \cdot \left[     \Bigg( \zeta(1+2\delta) \frac{\eta_{it}(\ell_1\ell_2)}{(\ell_1\ell_2)^{1/2+\delta}}
            \frac{K}{4} \int\limits_{0}^{\infty}\Phi(u)du     \right. \\
    & \qquad\qquad\quad\quad\quad  + \zeta(1-2\delta) \frac{\eta_{it}(\ell_1\ell_2)}{(\ell_1\ell_2)^{1/2-\delta}} \left(\frac{K}{4\pi}\right)^{-4\delta}
              \frac{K}{4} \int\limits_{0}^{\infty} \Phi(u)u^{-4\delta}du  \\
    & \qquad\quad \left. - 2\Re \Bigg\{\zeta(1-2it)\frac{\eta_\delta(\ell_1\ell_2)}{(\ell_1\ell_2)^{1/2-it}}
                \left(\frac{K}{4\pi}\right)^{-2\delta-2it}
                \frac{K}{4}\int\limits_{0}^{\infty}\Phi(u)u^{-2\delta-2it} du \Bigg\} \right] \; + \; \mathcal O\Big(MK^{\ve}\Big) \\
    & = \sum_{(\alpha,\beta)=(\pm\omega,\pm\bar{\omega})}\Psi(\alpha,\beta)\cV_{(\alpha,\beta)}(\omega) \;
            + \; \mathcal O\Big(MK^{\ve}\Big),
  \end{split}
\end{equation}
\vskip 4pt
\noindent
where for any $(\alpha,\beta)=(\pm\omega,\pm\bar{\omega})$,
\begin{equation}\label{eqn:Psi}
  \Psi(\alpha,\beta) := \frac{K}{4} \zeta(1+\alpha+\beta)\, \widetilde{\Phi} \,(1-\omega-\bar{\omega}+\alpha+\beta)\,\varepsilon(\alpha,\beta),
\end{equation}
and
\begin{equation}\label{eqn:cV}
  \cV_{(\alpha,\beta)}(\omega) := \sum_{\ell=1}^\infty\nu_{(\alpha,\beta)}(\ell)
  \sum_{\ell_1\ell_2=\ell} \;\sum_{d=1}^\infty \frac{1}{d^{1+\omega+\bar{\omega}}}
            \frac{x_{d\ell_1}(1/2+\omega)x_{d\ell_2}(1/2+\bar{\omega})}
            {\ell_1^{1+\omega}\ell_2^{1+\bar{\omega}}}.
\end{equation}

By the multiplicative property of the divisor function
\begin{equation}\label{eqn:eta-multi}
  \eta_\nu(mn) = \sum_{d|(m,n)}\mu(d)\eta_{\nu}(m/d)\eta_{\nu}(n/d),
\end{equation}
we can now obtain the following lemma.
\begin{lemma}
  Let $\omega\in\bbC$, and let $(\alpha,\beta)=(\pm\omega,\pm\bar{\omega})$.
  We have
  \begin{equation*}
    \cV_{(\alpha,\beta)}(\omega) = \sum_{r=1}^\infty\frac{1}{r^{1+\omega+\bar{\omega}}}
    \tau_{(\alpha,\beta)}(r) S_{(\alpha,\beta)}(r;\omega)S_{(\alpha,\beta)}(r;\bar{\omega}),
  \end{equation*}
  where, for $z\in\{\omega,\bar{\omega}\}$,
  \begin{equation}\label{eqn:tau&S}
    \begin{split}
       \tau_{(\alpha,\beta)}(r) & := \sum_{d|r} \frac{\mu(d)}{d^{1+\alpha+\beta}}, \\
       S_{(\alpha,\beta)}(r;z) & := \sum_{\ell=1}^\infty \frac{\nu_{(\alpha,\beta)}(\ell)x_{\ell r}(1/2+z)}{\ell^{1+z}}.
    \end{split}
  \end{equation}
\end{lemma}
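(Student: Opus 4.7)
The plan is to decouple the sum over $\ell = \ell_1\ell_2$ appearing in \eqref{eqn:cV} by means of the multiplicative identity \eqref{eqn:eta-multi}, and then to repackage the resulting quadruple sum via the substitution $r = de$.

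First I would apply \eqref{eqn:eta-multi} to $\eta_{(\alpha-\beta)/2}(\ell_1\ell_2)$ and divide through by $(\ell_1\ell_2)^{(\alpha+\beta)/2}$. Writing $\ell_1 = em_1$ and $\ell_2 = em_2$ whenever $e\mid(\ell_1,\ell_2)$, the factor $(\ell_1\ell_2)^{(\alpha+\beta)/2}$ becomes $e^{\alpha+\beta}(m_1 m_2)^{(\alpha+\beta)/2}$, so the identity rearranges to the clean form
$$\nu_{(\alpha,\beta)}(\ell_1\ell_2) \;=\; \sum_{e\mid(\ell_1,\ell_2)} \frac{\mu(e)}{e^{\alpha+\beta}}\,\nu_{(\alpha,\beta)}(\ell_1/e)\,\nu_{(\alpha,\beta)}(\ell_2/e).$$

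Next I would substitute this into \eqref{eqn:cV}, trading $(\ell_1,\ell_2)$ for $(m_1,m_2,e)$. The denominator $\ell_1^{1+\omega}\ell_2^{1+\bar\omega}$ becomes $e^{2+\omega+\bar\omega}\,m_1^{1+\omega}\,m_2^{1+\bar\omega}$, and the only coupling left between the outer $(d,e)$ sums and the inner $(m_1,m_2)$ sums is through the factors $x_{dem_1}$ and $x_{dem_2}$, which see $d$ and $e$ only via the product $de$. This motivates setting $r := de$ and replacing the $(d,e)$-sum by a sum over $r$ with an inner sum over divisors $e\mid r$ and $d = r/e$. The outer prefactor then collapses to
$$\sum_{e\mid r}\frac{\mu(e)}{e^{\,2+\alpha+\beta+\omega+\bar\omega}\,(r/e)^{1+\omega+\bar\omega}} \;=\; \frac{1}{r^{\,1+\omega+\bar\omega}}\sum_{e\mid r}\frac{\mu(e)}{e^{\,1+\alpha+\beta}} \;=\; \frac{\tau_{(\alpha,\beta)}(r)}{r^{\,1+\omega+\bar\omega}},$$
while the residual double sum over $m_1,m_2$ factors directly as $S_{(\alpha,\beta)}(r;\omega)\,S_{(\alpha,\beta)}(r;\bar\omega)$ by the definition \eqref{eqn:tau&S}.

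No convergence issue arises, since $x_n(1/2+z)$ is supported on $n\le M$ and all sums are finite; the only conceptual input is the multiplicative identity \eqref{eqn:eta-multi}, and the rest is bookkeeping of exponents together with a bijective change of variables $(d,e)\leftrightarrow(r,e)$. I do not expect any genuine obstacle here.
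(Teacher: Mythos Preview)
Your proposal is correct and follows essentially the same route as the paper: apply the multiplicative identity \eqref{eqn:eta-multi} to split $\nu_{(\alpha,\beta)}(\ell_1\ell_2)$, substitute $\ell_i=em_i$, and then merge the outer sums via $r=de$ so that the divisor sum over $e\mid r$ produces $\tau_{(\alpha,\beta)}(r)$ while the inner sums factor into $S_{(\alpha,\beta)}(r;\omega)S_{(\alpha,\beta)}(r;\bar\omega)$. The only cosmetic difference is notation (the paper uses $a$ for your $e$ and reuses $\ell_i$ for your $m_i$); your remark that all sums are finite because $x_n$ is supported on $n\le M$ is a welcome clarification.
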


\begin{proof}
  By \eqref{eqn:eta-multi} we have
  \[
    \begin{split}
          \quad\ \cV_{(\alpha,\beta)}(\omega)
         & = \sum_{\ell_1=1}^\infty\sum_{\ell_2=1}^\infty  \sum_{d=1}^\infty \; \sum_{a|(\ell_1,\ell_2)}
            \frac{\mu(a)}{d^{1+\omega+\bar{\omega}}}
            \frac{\eta_{(\alpha-\beta)/2}(\ell_1/a)\eta_{(\alpha-\beta)/2}(\ell_2/a)}
            {(\ell_1\ell_2)^{(\alpha+\beta)/2}}
            \frac{x_{d\ell_1}(1/2+\omega)x_{d\ell_2}(1/2+\bar{\omega})}
            {\ell_1^{1+\omega}\ell_2^{1+\bar{\omega}}} \\
         & =  \sum_{\ell_1=1}^\infty\sum_{\ell_2=1}^\infty  \sum_{d=1}^\infty\sum_{a|\ell_1}\sum_{a|\ell_2}
            \frac{\mu(a)}{d^{1+\omega+\bar{\omega}}a^{\alpha+\beta}}
            \nu_{(\alpha,\beta)}(\ell_1/a)\nu_{(\alpha,\beta)}(\ell_2/a)
            \frac{x_{d\ell_1}(1/2+\omega)x_{d\ell_2}(1/2+\bar{\omega})}
            {\ell_1^{1+\omega}\ell_2^{1+\bar{\omega}}} \\
         & = \sum_{d=1}^\infty\sum_{a=1}^\infty\sum_{\ell_1=1}^\infty\sum_{\ell_2=1}^\infty
            \frac{\mu(a)}{d^{1+\omega+\bar{\omega}}a^{2+\omega+\bar{\omega}+\alpha+\beta}}
            \nu_{(\alpha,\beta)}(\ell_1)\nu_{(\alpha,\beta)}(\ell_2)
            \frac{x_{ad\ell_1}(1/2+\omega)x_{ad\ell_2}(1/2+\bar{\omega})}
            {\ell_1^{1+\omega}\ell_2^{1+\bar{\omega}}} \\
         & = \sum_{r=1}^\infty\frac{1}{r^{1+\omega+\bar{\omega}}}
            \sum_{a|r}\frac{\mu(a)}{a^{1+\alpha+\beta}}
            S_{(\alpha,\beta)}(r;\omega)S_{(\alpha,\beta)}(r;\bar{\omega}).
    \end{split}
  \]
  This completes the proof of the lemma.
\end{proof}

For any $(\alpha,\beta)=(\pm\omega,\pm\bar{\omega})$, we define
\begin{equation}\label{eqn:cV<>}
  \begin{split}
     \cV_{(\alpha,\beta)}^{\leq}(\omega) & := \sum_{1\leq r\leq M^{1-\Upsilon}}
        \frac{1}{r^{1+\omega+\bar{\omega}}}
        \tau_{(\alpha,\beta)}(r) S_{(\alpha,\beta)}(r;\omega)S_{(\alpha,\beta)}(r;\bar{\omega}), \\
     \cV_{(\alpha,\beta)}^{>}(\omega) & := \sum_{M^{1-\Upsilon}<r\leq M}
        \frac{1}{r^{1+\omega+\bar{\omega}}}
        \tau_{(\alpha,\beta)}(r) S_{(\alpha,\beta)}(r;\omega)S_{(\alpha,\beta)}(r;\bar{\omega}),
  \end{split}
\end{equation}
and we refer to these as the summation of the \emph{short-range} and \emph{long-range terms}, respectively.
Note that we have $S_{(\alpha,\beta)}(r;z)=0$ if $r>M$, for $z\in\{\omega,\bar{\omega}\}$.

\subsection{Treatment of $S_{(\alpha,\beta)}(r;z)$}

For $z\in\{\omega,\bar{\omega}\}$, any integer $r\geq1$,
any real $y>0$, any complex number $s\in\bbC$ with $\Re(s)>3|\delta|$,
and any polynomial $R$, we define
\begin{equation}\label{eqn:T(s)}
  T_{(\alpha,\beta)}(s;r;z) := \sum_{\ell\geq1} \frac{\nu_{(\alpha,\beta)}(\ell)}{\ell^{1+s+z}}
  \sum_{n\geq1} \frac{\mu^{2}(\ell nr)\mu(\ell r)}{n^{1+s+2z}},
\end{equation}
\begin{equation}\label{eqn:T(yR)}
  T_{(\alpha,\beta)}(y,R;r;z) := \sum_{\ell\geq1} \frac{\nu_{(\alpha,\beta)}(\ell)}{\ell^{1+z}}
  \sum_{1\leq n\leq y/(\ell r)} \frac{\mu^{2}(\ell nr)\mu(\ell r)}{n^{1+2z}} R\left(\frac{\log(y/(\ell nr))}{\log y}\right).
\end{equation}
Let
\begin{equation}\label{eqn:Q}
  Q(x) := 1-P\Big(\Upsilon+(1-\Upsilon)x\big).
\end{equation}
We remark that
\begin{equation}\label{eqn:S=T+T}
  S_{(\alpha,\beta)}(r;z) = T_{(\alpha,\beta)}(M,P;r;z)
    +T_{(\alpha,\beta)}(M^{1-\Upsilon},Q;r;z),
\end{equation}
if $r\leq M^{1-\Upsilon}$; and
\begin{equation}\label{eqn:S=T}
  S_{(\alpha,\beta)}(r;z) = T_{(\alpha,\beta)}(M,P;r;z),
\end{equation}
if $M^{1-\Upsilon}<r\leq M$.

\begin{lemma} \label{lemma:T}
  Let $z\in\{\omega,\bar{\omega}\}$ and
  $(\alpha,\beta)=(\pm\omega,\pm\bar{\omega})$.
 For 
  $\Re(s)>-\delta+|\delta|$, we have
  \[
    T_{(\alpha,\beta)}(s;r;z) = \frac{\mu(r) \, G_{(\alpha,\beta)}(s;r;z) \, \zeta(1+s+2z)}
            {\zeta(1+s+z+\alpha) \, \zeta(1+s+z+\beta)},
  \]
  where $G_{(\alpha,\beta)}(s;r;z):=\prod_p G_{p,(\alpha,\beta)}(s;r;z)$ with
  \[
    G_{p,(\alpha,\beta)}(s;r;z):= \left\{\begin{array}{ll}
                                           \left(1-p^{-1-s-z-\alpha}\right)^{-1}
                                           \left(1-p^{-1-s-z-\beta}\right)^{-1}
                                           \left(1-p^{-1-s-2z}\right), & \textrm{if}\ p|r, \\
                                           &
                                           \\
                                           \begin{array}{c}
                                             \left(1-p^{-1-s-z-\alpha}\right)^{-1}
                                             \left(1-p^{-1-s-z-\beta}\right)^{-1}
                                             \left(1-p^{-1-s-2z}\right) \\
                                             \cdot \left(1-p^{-1-s-z-\alpha}- p^{-1-s-z-\beta}+ p^{-1-s-2z}\right),
                                           \end{array} & \textrm{if}\ p\nmid r,
                                         \end{array}\right.
  \]
  so that $G_{(\alpha,\beta)}(s;r;z)$ is holomorphic in $\Re(s)>-1/2+\max\{-2\Re(\omega),0\}$.
  Let $1\leq y\leq M$ and $R$ be a polynomial with $R(0)=R'(0)=0$. Set
  $$E(r):=\prod_{p|r}(1+1/\sqrt{p}).$$
  For any positive integer $r\leq y$ we have
  \[
    \begin{split}
      T_{(\alpha,\beta)}(y,R;r;z) & = \underset{s=0}{\Res} \;\frac{\mu(r) \, G_{(\alpha,\beta)}(s;r;z) \,\zeta(1+s+2z)}
                {s\zeta(1+s+z+\alpha) \, \zeta(1+s+z+\beta)}
                \sum_{j=0}^{\infty} \frac{1}{(s\log y)^j}
                R^{(j)}\left(\frac{\log y/r}{\log y}\right) \\
       &\hskip -46pt + \delta_{z\not\in\{\alpha,\beta\}} \frac{\mu(r) \, G_{(\alpha,\beta)}(-2z;r;z)}
                {\zeta(1-z+\alpha) \,\zeta(1-z+\beta)}
                \left(\frac{y}{r}\right)^{-2z}
                \sum_{j=2}^{\infty} \frac{R^{(j)}(0)}{(-2z)^{j+1} \log^j y} \;  + \; \mathcal O\left(\frac{E(r)}{\log^2 y}e^{-A_0\sqrt{\log y/r}}\right),
    \end{split}
  \]
  where $\delta_{z\not\in\{\alpha,\beta\}}$ is $1$ if $z\not\in\{\alpha,\beta\}$,
  and $0$, otherwise.
\end{lemma}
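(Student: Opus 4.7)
The plan has two distinct parts: a straightforward Euler product computation for the Dirichlet series $T_{(\alpha,\beta)}(s;r;z)$, and a Perron--plus--contour-shift evaluation for $T_{(\alpha,\beta)}(y,R;r;z)$.

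For the first identity, I would exploit the combinatorial constraints hidden in $\mu^2(\ell n r)\mu(\ell r)$: the $\mu^2$ forces $\ell, n, r$ to be pairwise coprime and each squarefree, so $T_{(\alpha,\beta)}(s;r;z)$ factors as an Euler product. At a prime $p\mid r$, both $\ell$ and $n$ must be prime-to-$p$, so the local factor contributes $1$ and the $-1$ coming from $\mu(\ell r)$ at this prime is absorbed into the prefactor $\mu(r)$. At $p\nmid r$, the three mutually exclusive possibilities $(e_p,n_p)\in\{(0,0),(1,0),(0,1)\}$, together with the identity $\nu_{(\alpha,\beta)}(p)=p^{-\alpha}+p^{-\beta}$ (which follows from $\eta_{(\alpha-\beta)/2}(p)=p^{(\alpha-\beta)/2}+p^{-(\alpha-\beta)/2}$ and the definition of $\nu$), produce the local factor
$$1 - p^{-1-s-z-\alpha} - p^{-1-s-z-\beta} + p^{-1-s-2z}.$$
A direct rearrangement matches this against $G_{p,(\alpha,\beta)}(s;r;z)\cdot (1-p^{-1-s-z-\alpha})(1-p^{-1-s-z-\beta})/(1-p^{-1-s-2z})$, yielding the claimed formula. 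Expanding each $G_p$ one step beyond the leading $1$ then shows $G_p=1+O(p^{-2-2\Re(s)-\text{small}})$, which establishes absolute convergence in the stated half-plane.

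For the evaluation of $T_{(\alpha,\beta)}(y,R;r;z)$, I would apply Mellin--Perron to the Dirichlet series with coefficients $c_m=\sum_{\ell n=m}\nu_{(\alpha,\beta)}(\ell)\mu^2(mr)\mu(\ell r)/(\ell^{1+z}(m/\ell)^{1+2z})$, whose generating series is $T_{(\alpha,\beta)}(s;r;z)$. Writing $R(t)=\sum_{j\ge 2}a_j t^j$ (using $R(0)=R'(0)=0$) and invoking the standard formula $\sum_{m\le X}c_m(\log X/m)^j=(j!/2\pi i)\int_{(c)}T(s;r;z)X^s s^{-j-1}ds$ with $X=y/r$ yields
$$T_{(\alpha,\beta)}(y,R;r;z) = \frac{1}{2\pi i}\int_{(c)} T_{(\alpha,\beta)}(s;r;z)\sum_{j\ge 2}\frac{a_j j!}{s^{j+1}(\log y)^j}\,(y/r)^s\, ds.$$
I would then shift the contour to $\Re(s)=-1/2+\max\{-2\Re(\omega),0\}+\ve$, collecting residues at $s=0$ (a high-order pole from $s^{-j-1}$) and, when $z\notin\{\alpha,\beta\}$, at $s=-2z$ (from the simple pole of $\zeta(1+s+2z)$; for $z\in\{\alpha,\beta\}$ this pole is cancelled by a matching factor in the denominator).

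The residue at $s=0$ can be unpacked by Leibniz: for each $j$, $\Res_{s=0}T(s)(y/r)^s/s^{j+1}=(1/j!)\sum_{i+m=j}\binom{j}{i}T^{(i)}(0)(\log(y/r))^m$; summing in $j$ and recognizing $\sum_j a_j j!\binom{j}{i}(\log(y/r))^{j-i}/(\log y)^j=R^{(i)}(\log(y/r)/\log y)/(\log y)^i$ yields $\sum_{i\ge 0}T^{(i)}(0)R^{(i)}(\log(y/r)/\log y)/(i!(\log y)^i)$. A final Taylor-expansion identity shows that this equals the compact residue form $\Res_{s=0}\bigl[T(s;r;z)/s \cdot \sum_j R^{(j)}(\log(y/r)/\log y)/(s\log y)^j\bigr]$ stated in the lemma. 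The residue at $s=-2z$ is a direct evaluation giving the $\delta_{z\notin\{\alpha,\beta\}}$ term. The error from the shifted integral is bounded by combining the classical de la Vall\'ee Poussin zero-free region, the bound $1/\zeta(1+s+\cdots)\ll \log(|\Im s|+2)$ there, and the estimate $G(s;r;z)\ll E(r)$ on the contour; standard arguments collapse this to $O(E(r)(\log y)^{-2}\exp(-A_0\sqrt{\log y/r}))$. The main technical obstacle is the contour shift: balancing the choice of the shifted line against the zero-free region so that the off-line integral receives the sub-polynomial savings $\exp(-A_0\sqrt{\log y/r})$ uniformly in $r$ and in the small parameter $\omega$.
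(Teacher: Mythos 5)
Your Euler-product computation of $T_{(\alpha,\beta)}(s;r;z)$ and your Perron set-up (Taylor-expanding $R$ using $R(0)=R'(0)=0$, the kernel $s^{-j-1}$, the residues at $s=0$ and $s=-2z$, and the Leibniz regrouping of the residue at $s=0$ into the compact form $\sum_{j\ge 0}(s\log y)^{-j}R^{(j)}(\log(y/r)/\log y)$) all match the paper's argument, which follows Conrey--Soundararajan; your Leibniz/Taylor regrouping is an equivalent variant of their truncated-exponential trick.

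The one step that fails as written is the contour shift. You propose moving the line of integration to $\Re(s)=-1/2+\max\{-2\Re(\omega),0\}+\ve$ and then bounding the shifted integral using the de la Vall\'ee Poussin zero-free region and $1/\zeta(1+s+\cdots)\ll\log(|\Im s|+2)$. But those bounds are only available when $1+s+z+\alpha$ and $1+s+z+\beta$ stay inside the zero-free region, i.e.\ for $\Re(s)\gg -c/\log(|\Im s|+2)$ up to the tiny shifts by $z,\alpha,\beta$; on a line with $\Re(s)\approx -1/2$ the arguments of the two zeta factors in the denominator have real part about $1/2$, where $1/\zeta$ is not under control, and -- worse -- in sweeping the strip $-1/2<\Re(s)<0$ you would cross every pole of the integrand coming from a hypothetical zero of $\zeta$ with real part in $(1/2,1)$, whose nonexistence is exactly what cannot be assumed. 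Your own error term betrays this: a legitimate shift to $\Re(s)=-1/2+\ve$ would yield a power saving $(y/r)^{-1/2+\ve}$, whereas the bound $E(r)(\log y)^{-2}e^{-A_0\sqrt{\log y/r}}$ is what a shift to the edge of the zero-free region produces. The correct procedure (and the paper's) is: shift first to $\Re(s)=B/\log(2y/r)$, truncate the vertical integral at height $T=\exp\bigl(\sqrt{\log(2y/r)}\bigr)$ (error $\ll E(r)(\log y)^{-2}\log^3(2y/r)\,T^{-2}$), then shift only the truncated segment to $\Re(s)=-c/\log T$, picking up the poles at $s=0$ and (when $z\notin\{\alpha,\beta\}$) at $s=-2z$, and bound the three remaining sides by $E(r)(\log y)^{-2}(\log(Ty/r))^3\bigl(T^{-2}+(y/r)^{-c/\log T}\bigr)$, which gives the stated $e^{-A_0\sqrt{\log y/r}}$ uniformly in $r$. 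You flag this balancing as ``the main technical obstacle'' but do not carry it out, and the contour you actually name cannot be used; the truncation in height, which is essential both for the horizontal segments and because the zero-free region narrows with height, is missing from the proposal.
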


\begin{proof}
  The first equation follows from comparing two Euler products.
  Indeed, we have
  \[
    \begin{split}
      T_{(\alpha,\beta)}(s;r;z) & = \underset{m\geq1}{{\sum}^\flat} \frac{1}{m^{1+s+z}}
        \sum_{\ell n=m} \frac{\nu_{(\alpha,\beta)}(\ell)\, \mu^{2}(\ell nr) \, \mu(\ell r)}{n^{z}} \\
        & = \mu(r) \underset{\substack{m\geq1\\ (m,r)=1}}{{\sum}^\flat} \frac{1}{m^{1+s+z}}
        \sum_{\ell n=m} \frac{\eta_{(\alpha-\beta)/2}(\ell) \,\mu(\ell)}{\ell^{(\alpha+\beta)/2}n^{z}} \\
        & = \mu(r) \prod_{p\nmid r} \left(1-\frac{1}{p^{1+s+z+\alpha}}- \frac{1}{p^{1+s+z+\beta}}+\frac{1}{p^{1+s+2z}}\right),
    \end{split}
  \]
  where $\flat$ means that we sum over square-free integers.
  To prove the other statement, we will use the following identity
  \[
    \frac{1}{2\pi i} \int\limits_{3-i\infty}^{3+i\infty} y^s\frac{ds}{s^{k+2}} \;
    = \;\delta_{y\geq1} \frac{\log^{k+1}(y)}{(k+1)!},
  \]
  which is standard using suitable contour shifts.
  By the Taylor expansion $R(x)=\sum\limits_{j=2}^{\infty}\frac{R^{(j)}(0)}{j!}x^j$,
  (here we use the assumption $R(0)=R'(0)=0$), we see that
  \[
    \begin{split}
       T_{(\alpha,\beta)}(y,R;r;z) & = \sum_{\ell\geq1} \frac{\nu_{(\alpha,\beta)}(\ell)}{\ell^{1+z}}
                \sum_{1\leq n\leq y/(\ell r)} \frac{\mu^{2}(\ell nr)\mu(\ell r)}{n^{1+2z}} \sum_{j=2}^{\infty} \frac{R^{(j)}(0)}{j!}\left(\frac{\log(y/(\ell nr))}{\log y}\right)^j \\
         & = \sum_{j=2}^{\infty} \frac{R^{(j)}(0)}{j!\log^j y} \sum_{\ell\geq1}
                \frac{\nu_{(\alpha,\beta)}(\ell)}{\ell^{1+z}} \sum_{1\leq n\leq y/(\ell r)}
                \frac{\mu^{2}(\ell nr)\mu(\ell r)}{n^{1+2z}} \log^j(y/(\ell nr)) \\
         & = \sum_{j=2}^{\infty} \frac{R^{(j)}(0)}{\log^j y} \frac{1}{2\pi i}  \int\limits_{3-i\infty}^{3+i\infty} \sum_{\ell\geq1}
                \frac{\nu_{(\alpha,\beta)}(\ell)}{\ell^{1+s+z}}
                \sum_{n\geq1} \frac{\mu^{2}(\ell nr)\mu(\ell r)}{n^{1+s+2z}}
                \left(\frac{y}{r}\right)^s \frac{ds}{s^{j+1}}  \\
         & = \sum_{j=2}^{\infty} \frac{R^{(j)}(0)}{\log^j y} \frac{1}{2\pi i}  \int\limits_{3-i\infty}^{3+i\infty}
                \frac{\mu(r) \, G_{(\alpha,\beta)}(s;r;z)\,\zeta(1+s+2z)}
                {\zeta(1+s+z+\alpha) \, \zeta(1+s+z+\beta)}
                \left(\frac{y}{r}\right)^s \frac{ds}{s^{j+1}}.
    \end{split}
  \]
  We may evaluate the above integral by a standard procedure.
First shift the contour to $\Re(s)=\frac{B}{\log(2y/r)}$, and truncate
  the integral to the line segment from $\frac{B}{\log(2y/r)}-iT$ to $\frac{B}{\log(2y/r)}+iT$
  where $B$ is a constant such that $\frac{B}{\log(2y/r)}>-2\delta$, and
  $$T:=\exp\left(\sqrt{\log(2y/r)}\right).$$
  The error involved in doing so is
  $$
    \ll \; \frac{E(r)}{\log^2 y} \log^3(2y/r) T^{-2}.
  $$
  Next, shift the integral on this line segment to the left onto
  the line segment $-c/\log T$, where $c$ is a positive constant such
  that $\zeta(1+s)$ has no zeros in the region $\Re(s)>-c/\log T$
  and $\Im(s)\leq T$.
  We encounter a multiple pole at $s=0$, and another simple pole
  at $s=-2z$ if $z\not\in\{\alpha,\beta\}$.
  The integrals on the three other sides are bounded using standard estimates
  for $1/\zeta(s)$ in the zero-free region,
  and contribute an amount
  $$
    \ll \; \frac{E(r)}{\log^2 y} \bigg(\log\frac{Ty}{r}\bigg)^3
    \left(T^{-2}+(y/r)^{-c/\log T}\right).
  $$
  We conclude that for an appropriate positive constant $A_0$, we have
  \[
    \begin{split}
       T_{(\alpha,\beta)}(y,R;r;z) & = \underset {s=0}{\Res}\; \frac{\mu(r) \, G_{(\alpha,\beta)}(s;r;z) \, \zeta(1+s+2z)}
                {s \, \zeta(1+s+z+\alpha) \, \zeta(1+s+z+\beta)}
                \sum_{j=2}^{\infty} \frac{R^{(j)}(0)}{s^j\log^j y}
                \left(\frac{y}{r}\right)^s \\
       & \hskip -45pt + \underset {s=-2z}{\Res} \;\frac{\mu(r) \, G_{(\alpha,\beta)}(s;r;z) \, \zeta(1+s+2z)}
                {\zeta(1+s+z+\alpha) \, \zeta(1+s+z+\beta)}
                \left(\frac{y}{r}\right)^s
                \sum_{j=2}^{\infty} \frac{R^{(j)}(0)}{s^{j+1}\log^j y}\; + \; \mathcal O\left( \frac{E(r)}{\log^2 y}e^{-A_0\sqrt{\log y/r}} \right).
    \end{split}
  \]
  Now we will follow the same argument as in
  Conrey--Soundararajan \cite[p. 38]{conrey2002real} for the residue at $s=0$.
  Indeed, we may replace $\sum\limits_{j=2}^{\infty} \frac{R^{(j)}(0)}{s^j\log^j y} \left(\frac{y}{r}\right)^s$
  with
  \[
    \begin{split}
      \sum_{j=2}^{\infty} \frac{R^{(j)}(0)}{s^j\log^j y}
        \bigg(\sum_{l\leq j}\frac{(s\log y/r)^l}{l!}\bigg)
      & = \sum_{k=0}^{\infty} \frac{1}{(s\log y)^k} \sum_{l=0}^{\infty} \frac{R^{(k+l)}(0)}{l!}
         \bigg(\frac{\log(y/r)}{\log y}\bigg)^l \\
      & = \sum_{j=0}^{\infty} \frac{1}{(s\log y)^j}R^{(j)}\bigg(\frac{\log(y/r)}{\log y}\bigg),
    \end{split}
  \]
  by grouping terms according to $k=j-l$, and using the fact $R(0)=R'(0)=0$.
  This completes the proof of the lemma.
\end{proof}



\subsection{Contribution of the short-range terms}


Recall that $0<\Upsilon<1$ is fixed. We first use lemma \ref{lemma:T} and equation \eqref{eqn:S=T+T}
to deduce the estimate for $S_{(\alpha,\beta)}(r;z)$
when $r\leq M^{1-\Upsilon}$.

\subsubsection{The case $(\alpha,\beta)=(\omega,\bar{\omega})$}
In this case we have
\[
  S_{(\omega,\bar{\omega})}(r;z) =
    \frac{\mu(r) \, G_{(\omega,\bar{\omega})}(0;r;z)\,\zeta(1+2z)}
    {\zeta(1+z+\omega) \, \zeta(1+z+\bar\omega)}
    +\mathcal  O\left(\frac{E(r)}{\log^2 K}e^{-A_0\sqrt{\frac{\log M^{1-\Upsilon}}{r}}}\right).
\]
Note that the main term above came form the $j=0$ contribution
in the applications of lemma \ref{lemma:T},
and that the contributions from $j\geq1$ in the two applications cancel each other.
By \eqref{eqn:cV<>}, we have
\[
  \begin{split}
    \cV_{(\omega,\bar{\omega})}^{\leq}(\omega)
    & = \sum_{1\leq r\leq M^{1-\Upsilon}}   \frac{\mu(r)^2}{r^{1+\omega+\bar{\omega}}}
        \tau_{(\omega,\bar{\omega})}(r)  \prod_{z\in\{\omega,\bar{\omega}\}} \frac{G_{(\omega,\bar{\omega})}(0;r;z) \,\zeta(1+2z)}
        {\zeta(1+z+\omega) \, \zeta(1+z+\bar\omega)}
        \; + \; \mathcal O\left( \frac{|\omega+\bar\omega|}{(\log K)^2}  \right).
  \end{split}
\]
Indeed, here we use the fact
\[
  \underset{1\leq r\leq x}{{\sum}^\flat} E(r)^3 \ll x,
  \quad \textrm{and, therefore,} \quad
  \underset{1\leq r\leq x}{{\sum}^\flat} \frac{E(r)^3}{r}e^{-c\sqrt{\log x/r}} \ll 1.
\]
Hence, by \eqref{eqn:ve} and \eqref{eqn:Psi}, we have
\[
  \begin{split}
    &  \Psi(\omega,\bar{\omega})\cV_{(\omega,\bar{\omega})}^{\leq}(\omega) \\
    &\hskip 36pt = \frac{K}{4} \frac{\widetilde{\Phi}(1)}{\zeta(1+\omega+\bar{\omega})}
        \sum_{1\leq r\leq M^{1-\Upsilon}}
        \frac{\mu(r)^2 \tau_{(\omega,\bar{\omega})}(r)  G_{(\omega,\bar{\omega})}(0;r;\omega)
        G_{(\omega,\bar{\omega})}(0;r;\bar{\omega})}{r^{1+\omega+\bar{\omega}}}
        + \, \mathcal O\left( \frac{K}{(\log K)^2}   \right).
  \end{split}
\]
To deal with the innermost sum above, we will use
Perron's formula  to prove the following lemma.

\begin{lemma}\label{lemma:r-sum++}
  Assume that $\log x\asymp\log K$ and $\omega$ satisfies $\mathbf{(I)}$.
  We have
  \[
    \begin{split}
      &  \sum_{1\leq r\leq x}
        \frac{\mu(r)^2 \, \tau_{(\omega,\bar{\omega})}(r)  \,G_{(\omega,\bar{\omega})}(0;r;\omega)\,
        G_{(\omega,\bar{\omega})}(0;r;\bar{\omega})}{r^{1+\omega+\bar{\omega}}} \\
      & \hskip 80pt = \zeta(1+\omega+\bar{\omega})\left(1-x^{-(\omega+\bar{\omega})}\right)(1+\mathcal{O}(|\omega+\bar{\omega}|))
      \; + \; \mathcal{O}\left(x^{-(\omega+\bar{\omega})}\right).
    \end{split}
  \]
  Furthermore, if $1\leq y\leq x$ and $\log y\asymp\log x$ then for any smooth
  function $R$ on $[0,1]$, we have
  \[
    \begin{split}
      &  \sum_{y\leq r\leq x}
        \frac{\mu(r)^2  \, \tau_{(\omega,\bar{\omega})}(r) \, G_{(\omega,\bar{\omega})}(0;r;\omega) \,
        G_{(\omega,\bar{\omega})}(0;r;\bar{\omega})}{r^{1+\omega+\bar{\omega}}}
        R\left( \frac{\log r}{\log x} \right) \\
      &\hskip 100pt = \big(1+ \mathcal O(|\omega+\bar{\omega}|)\big) \int_{y}^{x} R\left( \frac{\log t}{\log x}
        \right)\frac{dt}{t^{1+\omega+\bar{\omega}}} \; + \; \mathcal O_R\big(1\big).
    \end{split}
  \]
\end{lemma}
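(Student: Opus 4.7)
Write $w := \omega+\bar\omega$ and
\[
c_r := \mu(r)^2 \, \tau_{(\omega,\bar\omega)}(r) \, G_{(\omega,\bar\omega)}(0;r;\omega) \, G_{(\omega,\bar\omega)}(0;r;\bar\omega).
\]
A direct inspection of the local factors supplied by lemma \ref{lemma:T} shows that, for $(\alpha,\beta)=(\omega,\bar\omega)$ and $z\in\{\omega,\bar\omega\}$, one has $G_{p,(\omega,\bar\omega)}^{\nmid}(0;z)=1$ and $G_{p,(\omega,\bar\omega)}^{|}(0;z)=(1-p^{-1-w})^{-1}$. Combined with $\tau_{(\omega,\bar\omega)}(p)=1-p^{-1-w}$, this yields $c_r=\prod_{p\mid r}(1-p^{-1-w})^{-1}$ on square-free $r$, so that the Dirichlet series factors as
\[
F(s):=\sum_{r\ge 1}\frac{c_r}{r^{1+w+s}}=\prod_p\left(1+\frac{p^{-1-w-s}}{1-p^{-1-w}}\right)=\zeta(1+w+s)\,H(s),
\]
where the local Euler factor of $H$ equals $1+p^{-2-2w-s}(1-p^{-s})/(1-p^{-1-w})$. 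This product is absolutely convergent and holomorphic for $\Re(s)>-1/2+\ve$ uniformly in small $|w|$; moreover $H(0)=1$ and a Taylor expansion in $s$ gives $H(-w)=1+\mathcal{O}(|w|)$.

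\textbf{Proof of the first estimate via contour shift.} I would apply the (smoothed) Perron formula to write
\[
\Sigma(x):=\sum_{r\le x}\frac{c_r}{r^{1+w}}=\frac{1}{2\pi i}\int\limits_{c-iT}^{c+iT} F(s)\,\frac{x^s}{s}\,ds+(\text{Perron error}),
\]
and then shift the contour to $\Re(s)=-\eta/\log T$ with $T$ a suitable power of $\log K$ and $\eta>0$ small enough that the new line lies inside the classical zero-free region of $\zeta$. Since $|w|$ in case $\mathbf{(I)}$ satisfies $2|w|\log T \ll \log\log K$, the pole of $\zeta(1+w+s)$ at $s=-w$ and the pole of $1/s$ at $s=0$ are both crossed, contributing
\[
\underset{s=0}{\Res}=F(0)=\zeta(1+w),\qquad \underset{s=-w}{\Res}=-\frac{H(-w)}{w}x^{-w}=-\frac{1+\mathcal{O}(|w|)}{w}\,x^{-w}.
\]
The integral along the new contour is $\mathcal{O}(x^{-\eta/\log T})$ by classical estimates for $\zeta$ and the absolute convergence of $H$. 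Using $\zeta(1+w)=1/w+\mathcal{O}(1)$ to write $\zeta(1+w)(1-x^{-w})=\zeta(1+w)-x^{-w}/w+\mathcal{O}(x^{-w})$, the two residues rearrange into the claimed shape $\zeta(1+w)(1-x^{-w})(1+\mathcal{O}(|w|))+\mathcal{O}(x^{-w})$.

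\textbf{Proof of the smooth version via Abel summation.} For the weighted sum, I would use Abel summation to express
\[
\sum_{y\le r\le x}\frac{c_r}{r^{1+w}}R\!\left(\frac{\log r}{\log x}\right)=\Sigma(x)R(1)-\Sigma(y^-)R\!\left(\frac{\log y}{\log x}\right)-\int\limits_y^x \Sigma(t)\,R'\!\left(\frac{\log t}{\log x}\right)\frac{dt}{t\log x},
\]
then substitute the asymptotic of the first part for $\Sigma(t)$. Since $\tfrac{d}{dt}\bigl[\zeta(1+w)(1-t^{-w})\bigr]=\zeta(1+w)\,w\,t^{-1-w}=(1+\mathcal{O}(|w|))\,t^{-1-w}$, a single integration by parts reassembles the main term as $(1+\mathcal{O}(|w|))\int_y^x R(\log t/\log x)\,dt/t^{1+w}$. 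The boundary contributions and the propagated $\mathcal{O}(t^{-w})$ error are absorbed into $\mathcal{O}_R(1)$ under the assumption $\log y\asymp\log x$, since $R,R'$ are bounded and the factor $x^{-w}$ is controlled on this range.

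\textbf{Main obstacle.} The delicate step is to verify that the contour-shift error does not swamp the near-cancellation between the two residues: in case $\mathbf{(I)}$, $|w|$ can be as small as $1/(\log K\log\log K)$, so the poles at $s=0$ and $s=-w$ sit extremely close together and their residues nearly cancel to produce the true size $\zeta(1+w)(1-x^{-w})\asymp \log x$. Getting the integral on the shifted line to be dominated by $\mathcal{O}(x^{-w})$ requires choosing $T$ as a large enough power of $\log K$ and invoking the standard zero-free region for $\zeta$; the polynomial factor $H$, being an absolutely convergent Euler product, contributes only a bounded amount. Away from case $\mathbf{(I)}$ the argument is easier and follows the same contour shift with standard modifications.
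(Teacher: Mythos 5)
Your proposal is correct and follows essentially the same route as the paper: you compute the local factors to identify the Dirichlet series as $\zeta(1+s+\omega+\bar\omega)H(s)$ with $H$ holomorphic and $H(0)=1$, apply Perron's formula with a small contour shift collecting the residues at $s=0$ and $s=-(\omega+\bar\omega)$, and deduce the weighted version by partial summation, just as in the paper's proof. The only differences are cosmetic (your truncation height $T$ is a power of $\log K$ rather than $e^{\sqrt{\log x}}$, and the appeal to the zero-free region is unnecessary here since no $1/\zeta$ occurs), and they do not affect the argument.
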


\begin{proof}
  Recalling the definition of $G_{(\omega,\bar{\omega})}(0;r;\omega)$
  in lemma \ref{lemma:T}, for any square-free $r$, we have
  \[
    \begin{split}
      G_{(\omega,\bar{\omega})}(0;r;z)
      & = G_{(\omega,\bar{\omega})}(0;1;z) \prod_{p|r} \left(1-\frac{1}{p^{1+\omega+\bar{\omega}}}\right)^{-1}
        = \prod_{p|r} \left(1-\frac{1}{p^{1+\omega+\bar{\omega}}}\right)^{-1},
    \end{split}
  \]
  where we use the fact that $G_{(\omega,\bar{\omega})}(0;1;z)=1$
  for $z\in\{\omega,\bar{\omega}\}$. Hence we have
  \[
    \begin{split}
       \sum_{r}
        \frac{\mu(r)^2 \tau_{(\omega,\bar{\omega})}(r)  G_{(\omega,\bar{\omega})}(0;r;\omega)
        G_{(\omega,\bar{\omega})}(0;r;\bar{\omega})}{r^{1+s+\omega+\bar{\omega}}} & = \sum_{r}
        \frac{\mu(r)^2 \tau_{(\omega,\bar{\omega})}(r)  \prod_{p|r} \left(1-\frac{1}{p^{1+\omega+\bar{\omega}}}\right)^{-2}}{r^{1+s+\omega+\bar{\omega}}} \\
      & \hskip -90pt = \prod_p \Bigg( 1 + \frac{\left(1-\frac{1}{p^{1+\omega+\bar{\omega}}}\right)
            \left(1-\frac{1}{p^{1+\omega+\bar{\omega}}}\right)^{-2}}
            {p^{1+s+\omega+\bar{\omega}}} \Bigg) \\
      & \hskip -90pt  = \zeta(1+s+\omega+\bar{\omega}) \prod_p
        \left(1-\frac{1}{p^{1+s+\omega+\bar{\omega}}}\right)
        \left( 1 + \sum_{k=1}^{\infty}\frac{1}{p^{s+k(1+\omega+\bar{\omega})}} \right) \\
      & \hskip -90pt  =: \zeta(1+s+\omega+\bar{\omega}) H_{(\omega,\bar{\omega})}(s),
    \end{split}
  \]
  We know that $H_{(\omega,\bar{\omega})}(s)$ is holomorphic for $\Re(s)>-1+2|\delta|$;
  and $H_{(\omega,\bar{\omega})}(0)=1$.
  Now we can use the Perron's formula to conclude our first statement.
  And the second statement will follow by the partial summation formula.
  Indeed, by Perron's formula (see \cite[Lemma 3.12]{titchmarsh1986theory}),
  we have
  \[
    \begin{split}
      &  \sum_{1\leq r\leq x}
        \frac{\mu(r)^2 \tau_{(\omega,\bar{\omega})}(r)  G_{(\omega,\bar{\omega})}(0;r;\omega)
        G_{(\omega,\bar{\omega})}(0;r;\bar{\omega})}{r^{1+\omega+\bar{\omega}}} \\
      & \hskip 70pt = \frac{1}{2\pi i} \int\limits_{-2\delta+\frac{A}{\log x}-iT}^{-2\delta+\frac{A}{\log x}+iT}
            \zeta(1+s+\omega+\bar{\omega})\, H_{(\omega,\bar{\omega})}(s)\; x^{s}\; \frac{ds}{s}
        \;    + \; \mathcal O\left( \frac{x^{-2\delta}(\log x)^B}{T} \right),
    \end{split}
  \]
  where constants $A,B>0$ satisfy that $-2\delta+\frac{A}{\log x}>0$ and
  $$\tau_{(\omega,\bar{\omega})}(r) \, G_{(\omega,\bar{\omega})}(0;r;\omega) \, G_{(\omega,\bar{\omega})}(0;r;\bar{\omega}) \; = \; \mathcal O\left((\log r)^B\right).$$
  We further choose $T=e^{\sqrt{\log x}}$. Now  shift the contour to the line segment $-2\delta-\frac{C}{\log T}-iT$ to $-2\delta-\frac{C}{\log T}+iT$, getting
  \[
    \begin{split}
      &  \sum_{1\leq r\leq x}
        \frac{\mu(r)^2 \, \tau_{(\alpha,\beta)}(r) \, G_{(\omega,\bar{\omega})}(0;r;\omega) \,
        G_{(\omega,\bar{\omega})}(0;r;\bar{\omega})}{r^{1+\omega+\bar{\omega}}} \\
      &\hskip 50pt = \;\underset {s=0} {\Res} \;  +  \; \underset{s=-(\omega+\bar\omega)}
{\Res} \;             + \; \mathcal O\left( \; \left |\frac{1}{2\pi i} \int\limits_{-2\delta-\frac{C}{\log T}-iT}^{-2\delta-\frac{C}{\log T}+iT}
            \zeta(1+s+\omega+\bar{\omega}) H_{(\omega,\bar{\omega})}(s) x^{s} \frac{ds}{s}\right | \; \right) \\
      & \hskip 117pt + \; \mathcal O\left( \; \left|\frac{1}{2\pi i} \int\limits_{-2\delta-\frac{C}{\log T}\pm iT}^{-2\delta+\frac{A}{\log x}\pm iT}
            \zeta(1+s+\omega+\bar{\omega}) H_{(\omega,\bar{\omega})}(s) x^{s} \frac{ds}{s}\right | \; \right) \;
            + \; \mathcal O\left( x^{-2\delta} \right) \\
      & \hskip 50pt  = \;\zeta(1+\omega+\bar{\omega})\left(1-x^{-(\omega+\bar{\omega})}\right)(1+\mathcal{O}(|\omega+\bar{\omega}|)) \;
       + \; \mathcal{O}\left(x^{-(\omega+\bar{\omega})}\right) \\
      & \hskip 50pt  = \;\zeta(1+\omega+\bar{\omega})\left(1-x^{-(\omega+\bar{\omega})}\right) \; + \; \mathcal O(1).
    \end{split}
  \]
  This proves the first claim of the lemma.
  For our second assertion, we use partial summation, obtaining
  \[
    \begin{split}
      &  \sum_{y\leq r\leq x}
        \frac{\mu(r)^2 \, \tau_{(\omega,\bar{\omega})}(r) \, G_{(\omega,\bar{\omega})}(0;r;\omega) \,
        G_{(\omega,\bar{\omega})}(0;r;\bar{\omega})}{r^{1+\omega+\bar{\omega}}}
        R\left( \frac{\log r}{\log x} \right) \\
      &\hskip 70pt = \int\limits_{y}^{x} R\left( \frac{\log t}{\log x} \right) d \sum_{1\leq r\leq t}
        \frac{\mu(r)^2 \, \tau_{(\omega,\bar{\omega})}(r)  \,
        G_{(\omega,\bar{\omega})}(0;r;\omega) \,
        G_{(\omega,\bar{\omega})}(0;r;\bar{\omega})}{r^{1+\omega+\bar{\omega}}} \\
        &
        \\
      &\hskip 70pt = R\left( \frac{\log t}{\log x} \right)
            \bigg( \sum_{1\leq r\leq t}
        \frac{\mu(r)^2 \, \tau_{(\omega,\bar{\omega})}(r)  \,
        G_{(\omega,\bar{\omega})}(0;r;\omega) \,
        G_{(\omega,\bar{\omega})}(0;r;\bar{\omega})}{r^{1+\omega+\bar{\omega}}} \bigg)\bigg|_{y}^{x}\\
      &\hskip 120pt - \int\limits_{y}^{x} \bigg( \sum_{1\leq r\leq t}
        \frac{\mu(r)^2 \, \tau_{(\omega,\bar{\omega})}(r)  \,
        G_{(\omega,\bar{\omega})}(0;r;\omega) \,
        G_{(\omega,\bar{\omega})}(0;r;\bar{\omega})}{r^{1+\omega+\bar{\omega}}} \bigg)
            d R\left( \frac{\log t}{\log x} \right) \\
            &
            \\
      &\hskip 70pt = R\left( \frac{\log t}{\log x} \right)
            \zeta(1+\omega+\bar{\omega})\left(1-t^{-(\omega+\bar{\omega})}\right)
            \bigg|_{y}^{x} \\
      &\hskip 120pt  - \int\limits_{y}^{x} \zeta(1+\omega+\bar{\omega})\left(1-t^{-(\omega+\bar{\omega})}\right)
            d R\left( \frac{\log t}{\log x} \right) \; + \; \mathcal O_R(1) \\
      &\hskip 70pt = (1+ \mathcal O\big(|\omega+\bar{\omega}|)\big) \int\limits_{y}^{x} R\left( \frac{\log t}{\log x}
        \right)\frac{dt}{t^{1+\omega+\bar{\omega}}} \; + \; \mathcal O_R(1).
    \end{split}
  \]
  This completes the proof of the lemma.
\end{proof}

As a consequence of lemma \ref{lemma:r-sum++}, we obtain
\begin{equation}\label{eqn:PsiV<++}
  \begin{split}
    \Psi(\omega,\bar{\omega})\cV_{(\omega,\bar{\omega})}^{\leq}(\omega)
    & = \frac{K}{4} \widetilde{\Phi}(1)
        \left(1-M^{-(1+\Upsilon)(\omega+\bar{\omega})}\right)(1+ \mathcal O\big(|\omega+\bar{\omega}|)\big)
       \;  + \; \mathcal O\left( \frac{K}{(\log K)^2}  \right).
  \end{split}
\end{equation}

\subsubsection{The case $(\alpha,\beta)=(-\omega,-\bar{\omega})$}

Here we have
\[
  \begin{split}
    S_{(-\omega,-\bar{\omega})}(r;z) &   =
    \frac{\mu(r)G_{(-\omega,-\bar{\omega})}(-2z;r;z)r^{2z}}
    {\zeta(1-z-\omega)\zeta(1-z-\bar{\omega})}
    \Bigg( M^{-2z} \sum_{j=2}^{\infty} \frac{P^{(j)}(0)}{(-2z)^{j+1}\log^j M} \\
    & \quad\ \ \qquad + M^{-2z(1-\Upsilon)} \sum_{j=2}^{\infty}
    \frac{Q^{(j)}(0)}{(-2z)^{j+1}\log^j M^{1-\Upsilon}} \Bigg)
   \; + \; \mathcal O\left(\frac{E(r)}{\log^2 K}e^{-A_0\sqrt{\frac{\log M^{1-\Upsilon}}{r}}}\right).
  \end{split}
\]
It follows from \eqref{eqn:cV<>} that
\[
  \begin{split}
    \cV_{(-\omega,-\bar{\omega})}^{\leq}(\omega)
    & = \sum_{1\leq r\leq M^{1-\Upsilon}}   \frac{\mu(r)^2 \tau_{(-\omega,-\bar{\omega})}(r)  G_{(-\omega,-\bar{\omega})}(-2\omega;r;\omega)G_{(-\omega,-\bar{\omega})}(-2\bar\omega;r;\bar\omega) }{r^{1-\omega-\bar{\omega}}\zeta(1-2\omega)\zeta(1-2\bar\omega)\zeta(1-\omega-\bar\omega)^2} \\
    & \qquad\qquad \cdot \Bigg| M^{-2\omega} \sum_{j=2}^{\infty} \frac{P^{(j)}(0)}{(-2\omega)^{j+1}\log^j M}
        + M^{-2\omega(1-\Upsilon)} \sum_{j=2}^{\infty}
        \frac{Q^{(j)}(0)}{(-2\omega)^{j+1}\log^j M^{1-\Upsilon}} \Bigg|^2 \\
    & \hskip 132pt  +\; \mathcal  O\left( \frac{|\omega+\bar\omega|(\log\log K)^3}{(\log K)^2}  \right).
  \end{split}
\]
Hence
\[
  \begin{split}
    \Psi(-\omega,-\bar{\omega}) & \cV_{(-\omega,-\bar{\omega})}^{\leq}(\omega)
     = \frac{K}{4} \left(\frac{K}{4\pi}\right)^{-2\omega-2\bar\omega}
            \frac{\widetilde{\Phi}(1)(1+\mathcal{O}(|\omega|))}
            {\zeta(1-\omega-\bar{\omega})}
            \\
    & \qquad \cdot  \sum_{1\leq r\leq M^{1-\Upsilon}}
        \frac{\mu(r)^2 \, \tau_{(-\omega,-\bar{\omega})}(r) \,G_{(-\omega,-\bar{\omega})}(-2\omega;r;\omega) \,G_{(-\omega,-\bar{\omega})}(-2\bar\omega;r;\bar\omega) }{r^{1-\omega-\bar{\omega}}} \\
    & \qquad\qquad \cdot \Bigg| M^{-2\omega} \sum_{j=2}^{\infty} \frac{P^{(j)}(0)}{(-2\omega)^{j}\log^j M}
        + M^{-2\omega(1-\Upsilon)} \sum_{j=2}^{\infty}
        \frac{Q^{(j)}(0)}{(-2\omega)^{j}\log^j M^{1-\Upsilon}} \Bigg|^2 \\
    & \hskip 130pt +\; \mathcal  O\left( \frac{(\log\log K)^3}{(\log K)^2} K  \right).
  \end{split}
\]
To deal with the $r$-sum above, we will use
Perron's formula again to prove the following lemma.

\begin{lemma}\label{lemma:r-sum--}
  Assume that $\log x\asymp\log K$ and $\omega$ satisfies $\mathbf{(I)}.$
  We have
  \[
    \begin{split}
      &  \sum_{1\leq r\leq x} \frac{\mu(r)^2 \, \tau_{(-\omega,-\bar{\omega})}(r) \, G_{(-\omega,-\bar{\omega})}(-2\omega;r;\omega) \, G_{(-\omega,-\bar{\omega})}(-2\bar\omega;r;\bar\omega) }{r^{1-\omega-\bar{\omega}}}  \\
      &\hskip 90pt = \; \zeta(1-\omega-\bar{\omega})\left(1-x^{\omega+\bar{\omega}}\right)(1\; + \; \mathcal O(|\omega|))
     \;  + \; \mathcal  O\left( e^{-c\sqrt{\log x}}\right).
    \end{split}
  \]
  Furthermore, if $1\leq y\leq x$ and $\log y\asymp\log x$ then for any smooth
  function $R$ on $[0,1]$, we have
  \[
    \begin{split}
      &  \sum_{y\leq r\leq x}
        \frac{\mu(r)^2 \, \tau_{(-\omega,-\bar{\omega})}(r) \, G_{(-\omega,-\bar{\omega})}(-2\omega;r;\omega) \, G_{(-\omega,-\bar{\omega})}(-2\bar\omega;r;\bar\omega) }{r^{1-\omega-\bar{\omega}}}
        R\left( \frac{\log r}{\log x} \right) \\
      & \hskip 110pt = (1+\mathcal{O}(|\omega+\bar\omega|)) \int\limits_{y}^{x} R\left( \frac{\log t}{\log x} 
        \right)\frac{dt}{t^{1-\omega-\bar{\omega}}} \; + \; \mathcal O_R\left(x^{\omega+\bar{\omega}}\right).
    \end{split}
  \]
  Similarly, we have
  \[
    \begin{split}
         &  \sum_{y<r\leq x}
        \frac{\mu(r)^2 \, \tau_{(-\omega,-\bar\omega)}(r) \, G_{(-\omega,-\bar\omega)}(0;r;\omega) \, G_{(-\omega,-\bar\omega)}(0;r;\bar\omega)}
        {r^{1+\omega+\bar{\omega}}} R\left( \frac{\log r}{\log x} \right) \\
         & \hskip 90pt = \big(1+\mathcal O(|\omega+\bar\omega|)\big) \int\limits_{y}^{x} R\left( \frac{\log t}{\log x} 
        \right)\frac{dt}{t^{1+\omega+\bar{\omega}}} \; + \; \mathcal O_R\left((\log\log K)^2\right),
    \end{split}
  \]
  and
  \[
    \begin{split}
      & \sum_{y<r\leq x}
        \frac{\mu(r)^2 \, \tau_{(-\omega,-\bar\omega)}(r) \, G_{(-\omega,-\bar\omega)}(0;r;\omega) \, G_{(-\omega,-\bar{\omega})}(-2\bar\omega;r;\bar\omega)}
        {r^{1+\omega-\bar{\omega}}} R\left( \frac{\log r}{\log x} \right) \\
      & \hskip 90pt = \big(1+ \mathcal O(|\omega-\bar\omega|)\big) \int\limits_{y}^{x} R\left( \frac{\log t}{\log x} \right)
        \frac{dt}{t^{1+\omega-\bar{\omega}}} \; + \; \mathcal O_R\left((\log\log K)^2\right).
    \end{split}
  \]
\end{lemma}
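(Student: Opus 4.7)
The proof of Lemma \ref{lemma:r-sum--} parallels that of Lemma \ref{lemma:r-sum++}, but one must accommodate a pole that is located to the right of the origin (at $s=\omega+\bar\omega$, which lies on the positive real axis when $\delta>0$) rather than at $s=0$. The first step is to obtain an Euler product for the Dirichlet series
\[
  D(s) \; := \; \sum_{r=1}^{\infty} \frac{\mu(r)^2\, \tau_{(-\omega,-\bar{\omega})}(r)\, G_{(-\omega,-\bar{\omega})}(-2\omega;r;\omega)\, G_{(-\omega,-\bar{\omega})}(-2\bar{\omega};r;\bar{\omega})}{r^{1+s-\omega-\bar{\omega}}}.
\]
Since the summand is supported on square-free $r$, each prime contributes a single local factor. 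Inserting the definitions of $\tau_{(\alpha,\beta)}$ from \eqref{eqn:tau&S} and of $G_{(\alpha,\beta)}$ from Lemma \ref{lemma:T}, and extracting the zeta pole in the usual way, one checks that
\[
  D(s) \; = \; \zeta(1+s-\omega-\bar{\omega})\, H_{(-\omega,-\bar{\omega})}(s),
\]
where $H_{(-\omega,-\bar{\omega})}(s)=\prod_p\bigl(1+O(p^{-1-s})\bigr)$ is absolutely convergent and holomorphic for $\Re(s)>-\tfrac12+O(|\delta|)$, and satisfies $H_{(-\omega,-\bar{\omega})}(\omega+\bar{\omega})=1+O(|\omega|)$ and $H_{(-\omega,-\bar{\omega})}(0)=1+O(|\omega|)$.

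Second, I would apply Perron's formula (as in \cite[Lemma 3.12]{titchmarsh1986theory}) with $T=e^{\sqrt{\log x}}$, taking the initial contour at $\Re(s)=(\omega+\bar{\omega})+A/\log x$, to write
\[
  \sum_{1\leq r\leq x} (\text{summand}) \; = \; \frac{1}{2\pi i}\int\limits_{c-iT}^{c+iT} \zeta(1+s-\omega-\bar{\omega})\, H_{(-\omega,-\bar{\omega})}(s)\, x^s \frac{ds}{s} \; + \; \mathcal{O}\!\left(\frac{x^{2\delta}(\log x)^B}{T}\right).
\]
Now shift the contour to $\Re(s)=-C/\log T$, which sits inside the standard zero-free region for the shifted zeta factor. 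Two residues appear: at $s=\omega+\bar{\omega}$, the pole of $\zeta(1+s-\omega-\bar{\omega})$ contributes $H_{(-\omega,-\bar{\omega})}(\omega+\bar{\omega})\, x^{\omega+\bar{\omega}}/(\omega+\bar{\omega})$; at $s=0$, the pole of $1/s$ contributes $-\zeta(1-\omega-\bar{\omega}) H_{(-\omega,-\bar{\omega})}(0)$. Using $\zeta(1-\omega-\bar{\omega})(\omega+\bar{\omega})=-1+O(|\omega|)$, the two residues combine into
\[
  \zeta(1-\omega-\bar{\omega})\bigl(1-x^{\omega+\bar{\omega}}\bigr)\bigl(1+\mathcal{O}(|\omega|)\bigr),
\]
while the horizontal and left-vertical segments of the shifted contour contribute $\mathcal{O}(e^{-c\sqrt{\log x}})$ by the same bounds for $1/\zeta$ used in Lemma \ref{lemma:r-sum++}. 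This proves the first assertion.

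Third, the smoothed statement with $R(\log r/\log x)$ and exponent $1-\omega-\bar{\omega}$ follows by partial summation from the unsmoothed asymptotic, exactly as in the proof of Lemma \ref{lemma:r-sum++}. For the two remaining smoothed identities (with exponents $1+\omega+\bar{\omega}$ and $1+\omega-\bar{\omega}$) the argument is identical in structure: one forms the corresponding Dirichlet series, verifies that its Euler product factorises as $\zeta(1+s+\omega+\bar{\omega}) H(s)$ and $\zeta(1+s+\omega-\bar{\omega}) H(s)$ respectively (the shift parameter $-2z$ in the relevant $G$-factors being replaced by $0$ in two of them, which changes the local factor but not the location of the pole), applies Perron's formula, and shifts past the simple zeta poles at $s=-(\omega+\bar{\omega})$ and $s=-(\omega-\bar{\omega})$ together with the pole of $1/s$, and finishes by partial summation. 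The $\mathcal{O}_R((\log\log K)^2)$ error here arises from the fact that $|\omega\pm\bar{\omega}|^{-1}$ is now as large as $\log K\,\log\log K$ under assumption $\mathbf{(I)}$, so the residue bounds lose a factor of this size relative to the first case.

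\textbf{Main obstacle.} The only substantive difficulty is the bookkeeping in the Euler product factorisation. Because the two $G$-factors use different values of $z$ (namely $\omega$ and $\bar\omega$), one must carefully track the cancellation of the $p^{-1-s+\omega+\bar{\omega}}$ term to extract precisely one copy of $\zeta(1+s-\omega-\bar{\omega})$ and verify that what remains is holomorphic on the desired half-plane with value $1+O(|\omega|)$ at the relevant points; an error here would corrupt the $(1+O(|\omega|))$ factor in the main term. All the remaining contour-shift and zero-free-region estimates are standard and copy verbatim from Lemma \ref{lemma:r-sum++}.
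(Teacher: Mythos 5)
Your argument is essentially the paper's own proof: the paper likewise factors the Dirichlet series as $\zeta(1+s-\omega-\bar\omega)\,H_{(-\omega,-\bar\omega)}(s)$ with $H_{(-\omega,-\bar\omega)}$ holomorphic in a half-plane and equal to $1+\mathcal O(|\omega|)$ at the relevant points, then appeals to Perron's formula with the same contour shift (residues at $s=0$ and $s=\omega+\bar\omega$) and deduces the remaining smoothed statements by partial summation, exactly as you outline. One cosmetic slip: the residue of $\zeta(1+s-\omega-\bar\omega)H(s)x^s/s$ at $s=0$ is $+\zeta(1-\omega-\bar\omega)H(0)$, not $-\zeta(1-\omega-\bar\omega)H(0)$; with the correct sign the two residues do combine, via $(\omega+\bar\omega)\zeta(1-\omega-\bar\omega)=-1+\mathcal O(|\omega|)$, into $\zeta(1-\omega-\bar\omega)\bigl(1-x^{\omega+\bar\omega}\bigr)\bigl(1+\mathcal O(|\omega|)\bigr)$ as you state.
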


\begin{proof}
  Recalling the definition of $G_{(-\omega,-\bar{\omega})}(-2\omega;r;\omega)$
  in lemma \ref{lemma:T}, we have
  \[
    \begin{split}
      G_{(-\omega,-\bar{\omega})}(-2z;r;z)
      & = G_{(-\omega,-\bar{\omega})}(-2z;1;z) \prod_{p|r} \left(1-\frac{1}{p^{1-z-\omega}}-\frac{1}{p^{1-z-\bar{\omega}}}+\frac{1}{p}\right)^{-1},
    \end{split}
  \]
  where  $G_{(-\omega,-\bar{\omega})}(-2z;1;z)=1+ \mathcal O\big(|\omega|\big)$
  for $z\in\{\omega,\bar{\omega}\}$. Hence
  \[
    \begin{split}
      & \sum_{r} \frac{\mu(r)^2 \, \tau_{(-\omega,-\bar{\omega})}(r) \, G_{(-\omega,-\bar{\omega})}(-2\omega;r;\omega) \, G_{(-\omega,-\bar{\omega})}(-2\bar\omega;r;\bar\omega) }{r^{1+s-\omega-\bar{\omega}}}  \\
      & \hskip 20pt= \prod_{z\in\{\omega,\bar\omega\}} G_{(-\omega,-\bar{\omega})}(-2z;1;z)
          \sum_{r} \frac{\mu(r)^2 \tau_{(-\omega,-\bar{\omega})}(r)
          \prod\limits_{z\in\{\omega,\bar\omega\}}\prod\limits_{p|r} \left(1-\frac{1}{p^{1-z-\omega}}-\frac{1}{p^{1-z-\bar{\omega}}}+\frac{1}{p}\right)^{-1} }{r^{1+s-\omega-\bar{\omega}}}  \\
      & \hskip 20pt = \prod_{z\in\{\omega,\bar\omega\}} G_{(-\omega,-\bar{\omega})}(-2z;1;z)
      \prod_p \bigg(1+\frac{\tau_{(-\omega,-\bar{\omega})}(p)
      \prod\limits_{z\in\{\omega,\bar\omega\}} \left(1-\frac{1}{p^{1-z-\omega}}-\frac{1}{p^{1-z-\bar{\omega}}}+\frac{1}{p}\right)^{-1} }{p^{1+s-\omega-\bar{\omega}}} \bigg) \\
      &
      \\
      & \hskip 20pt =: \zeta(1+s-\omega-\bar{\omega}) \, H_{(-\omega,-\bar{\omega})}(s),
    \end{split}
  \]
  where $H_{(-\omega,-\bar{\omega})}(s)$ is holomorphic for $\Re(s)>-1+2|\delta|$;
  and $H_{(-\omega,-\bar{\omega})}(0)=1+ \mathcal O(|\omega|)$.
  Now we can use the Perron's formula to obtain the first statement in lemma \ref{lemma:r-sum--}.
 The second statement  follows by the partial summation formula.
\end{proof}

It follows from lemma \ref{lemma:r-sum--} that
\begin{equation}\label{eqn:PsiV<--}
  \begin{split}
    \Psi(-\omega,-\bar{\omega})  \cV_{(-\omega,-\bar{\omega})}^{\leq}(\omega)
    & = \frac{K}{4} \left(\frac{K}{4\pi}\right)^{-2\omega-2\bar\omega}
        \widetilde{\Phi}(1) \left(1-M^{(\omega+\bar{\omega})(1-\Upsilon)}\right) (1+\mathcal{O}(|\omega|))    \\
    & \hskip 30pt \cdot \Bigg| M^{-2\omega} \sum_{j=2}^{\infty} \frac{P^{(j)}(0)}{(-2\omega)^{j}\log^j M}
        + M^{-2\omega(1-\Upsilon)} \sum_{j=2}^{\infty}
        \frac{Q^{(j)}(0)}{(-2\omega)^{j}\log^j M^{1-\Upsilon}} \Bigg|^2 \\
    & \hskip 110pt + \;\mathcal O\left( \frac{(\log\log K)^3}{(\log K)^2} K  \right).
  \end{split}
\end{equation}

\subsubsection{The cases $(\alpha,\beta)=(\omega,-\bar{\omega})$ and $(\alpha,\beta)=(-\omega,\bar{\omega})$}


We first consider $(\alpha,\beta)=(\omega,-\bar{\omega})$.
By lemma \ref{lemma:T}, we have
\[
  S_{(\omega,-\bar{\omega})}(r;\omega) =
    \frac{\mu(r) \, G_{(\omega,-\bar{\omega})}(0;r;\omega)}
    {\zeta(1+\omega-\bar\omega)}
   \; + \; \mathcal O\left(\frac{E(r)}{\log^2 K}e^{-A_0\sqrt{\frac{\log M^{1-\Upsilon}}{r}}}\right),
\]
and
\[
  \begin{split}
    S_{(\omega,-\bar{\omega})}(r;\bar\omega) &  =
    \frac{\mu(r) \, G_{(\omega,-\bar{\omega})}(-2\bar\omega;r;\bar\omega) \, r^{2\bar\omega}}
    {\zeta(1-\bar\omega+\omega)\zeta(1-2\bar{\omega})}
    \Bigg( M^{-2\bar\omega} \sum_{j=2}^{\infty} \frac{P^{(j)}(0)}{(-2\bar\omega)^{j+1}(\log M)^j} \\
    & \hskip 40pt \ + M^{-2\bar\omega(1-\Upsilon)} \sum_{j=2}^{\infty}
    \frac{Q^{(j)}(0)}{(-2\bar\omega)^{j+1}(\log M^{1-\Upsilon})^j} \Bigg)
  \;  + \; \mathcal O\left(\frac{E(r)}{\log^2 K}e^{-A_0\sqrt{\frac{\log M^{1-\Upsilon}}{r}}}\right).
  \end{split}
\]
By \eqref{eqn:cV<>}, we have
\[
  \begin{split}
    \cV_{(\omega,-\bar{\omega})}^{\leq}(\omega)
    & = \sum_{1\leq r\leq M^{1-\Upsilon}}   \frac{\mu(r)^2\, \tau_{(\omega,-\bar{\omega})}(r) \, G_{(\omega,-\bar{\omega})}(0;r;\omega) \,
    G_{(\omega,-\bar{\omega})}(-2\bar\omega;r;\bar\omega) }{r^{1+\omega-\bar{\omega}}\zeta(1-2\bar\omega)\zeta(1+\omega-\bar\omega)^2} \\
    & \qquad \cdot \Bigg( M^{-2\bar\omega} \sum_{j=2}^{\infty} \frac{P^{(j)}(0)}{(-2\bar\omega)^{j+1}(\log M)^j} + M^{-2\bar\omega(1-\Upsilon)} \sum_{j=2}^{\infty}
    \frac{Q^{(j)}(0)}{(-2\bar\omega)^{j+1}(\log M^{1-\Upsilon})^j} \Bigg) \\
    & \hskip 120pt  + \; \mathcal O\left( \frac{|\omega-\bar\omega|(\log\log K)^3}{(\log K)^2}  \right).
  \end{split}
\]
Hence
\[
  \begin{split}
  &  \Psi(\omega,-\bar{\omega}) \cV_{(\omega,-\bar{\omega})}^{\leq}(\omega)
     = - \frac{K}{4} \left(\frac{K}{4\pi}\right)^{-2\bar\omega}
            \frac{\widetilde{\Phi}(1)}{\zeta(1+\omega-\bar{\omega})}
            \left(1+ \mathcal O\big(|\omega|\big)\right) \\
    & \hskip 50pt \cdot  \sum_{1\leq r\leq M^{1-\Upsilon}}   \frac{\mu(r)^2 \, \tau_{(\omega,-\bar{\omega})}(r)  \, G_{(\omega,-\bar{\omega})}(0;r;\omega) \,
    G_{(\omega,-\bar{\omega})}(-2\bar\omega;r;\bar\omega)} {r^{1+\omega-\bar{\omega}}} \\
    & \hskip 75pt \cdot \Bigg( M^{-2\bar\omega} \sum_{j=2}^{\infty} \frac{P^{(j)}(0)}{(-2\bar\omega)^{j}(\log M)^j} + M^{-2\bar\omega(1-\Upsilon)} \sum_{j=2}^{\infty}
    \frac{Q^{(j)}(0)}{(-2\bar\omega)^{j}(\log M^{1-\Upsilon})^j} \Bigg) \\
    & \hskip 170pt + \; \mathcal  O\left( \frac{(\log\log K)^3}{(\log K)^2} K  \right).
  \end{split}
\]
To deal with the $r$-sum above, we will use
Perron's formula again to prove the following lemma.

\begin{lemma}\label{lemma:r-sum+-}
  Assume that $\log x\asymp\log K$ and $\omega$ satisfies $\mathbf{(I)}$.
  We have
  \[
    \begin{split}
      &  \sum_{1\leq r\leq x} \frac{\mu(r)^2 \,\tau_{(\omega,-\bar{\omega})}(r) \,
        G_{(\omega,-\bar{\omega})}(0;r;\omega) \,
        G_{(\omega,-\bar{\omega})}(-2\bar\omega;r;\bar\omega) }
        {r^{1+\omega-\bar{\omega}}} \\
      &\hskip 100pt = \zeta(1+\omega-\bar{\omega})\left(1-x^{-(\omega-\bar{\omega})}\right)
            \Big(1+ \mathcal O(|\omega|)\Big)
      \; + \; \mathcal O\big(1\big).
    \end{split}
  \]
  Furthermore, if $1\leq y\leq x$ and $\log y\asymp\log x$ then for any smooth
  function $R$ on $[0,1]$, we have
  \[
    \begin{split}
      & \sum_{y\leq r\leq x}
        \frac{\mu(r)^2 \, \tau_{(\omega,-\bar{\omega})}(r)\,
        G_{(\omega,-\bar{\omega})}(0;r;\omega)\,
        G_{(\omega,-\bar{\omega})}(-2\bar\omega;r;\bar\omega) }{r^{1+\omega-\bar{\omega}}}
        R\left( \frac{\log r}{\log x} \right) \\
      &\hskip 100pt = \big(1+ \mathcal O\big(|\omega-\bar\omega|\big)\big) \int\limits_{y}^{x} R\left( \frac{\log t}{\log x}
        \right)\frac{dt}{t^{1+\omega-\bar{\omega}}} \; + \; \mathcal  O_R\left((\log\log K)^2\right).
    \end{split}
  \]
  Similarly, we have
  \[
    \begin{split}
      & \sum_{y<r\leq x}
        \frac{\mu(r)^2 \, \tau_{(\omega,-\bar\omega)}(r) \, G_{(\omega,-\bar\omega)}(0;r;\omega) \, G_{(\omega,-\bar\omega)}(0;r;\bar\omega)}
        {r^{1+\omega+\bar{\omega}}} R\left( \frac{\log r}{\log x} \right) \\
      &\hskip 100pt = \big(1+ \mathcal O(|\omega+\bar\omega|)\big) \int\limits_{y}^{x} R\left( \frac{\log t}{\log x} 
        \right)\frac{dt}{t^{1+\omega+\bar{\omega}}} \; + \; \mathcal O_R\left((\log\log K)^2\right).
    \end{split}
  \]
\end{lemma}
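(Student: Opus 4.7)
The plan is to prove all three assertions of lemma \ref{lemma:r-sum+-} by the same Perron's formula argument used in lemmas \ref{lemma:r-sum++} and \ref{lemma:r-sum--}. For the first claim, I would introduce the Dirichlet series
$$\mathcal{D}(s) := \sum_{r=1}^{\infty} \frac{\mu(r)^2\, \tau_{(\omega,-\bar{\omega})}(r)\, G_{(\omega,-\bar{\omega})}(0;r;\omega)\, G_{(\omega,-\bar{\omega})}(-2\bar\omega;r;\bar\omega) }{r^{1+s+\omega-\bar{\omega}}}$$
and express it as an Euler product. Using the formulas for $G_{p,(\omega,-\bar{\omega})}(s;r;z)$ from lemma \ref{lemma:T}, the local factor at $p$ has leading behaviour $p^{-1-s-(\omega-\bar{\omega})}$, so I would extract the principal zeta factor and write $\mathcal{D}(s) = \zeta(1+s+\omega-\bar{\omega})\, H(s)$, with $H(s)$ holomorphic in a half-plane $\Re(s) > -1/2 + 2|\delta|$ and $H(0) = 1 + O(|\omega|)$.

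Applying Perron's formula on the line $\Re(s) = A/\log x$, truncated at height $T = \exp(\sqrt{\log x})$, and shifting the contour leftward to $\Re(s) = -C/\log T$ picks up two simple poles: one at $s = 0$ contributing $\zeta(1+\omega-\bar{\omega}) H(0)$, and one at $s = -(\omega-\bar{\omega})$ from the zeta factor contributing $H(-(\omega-\bar{\omega}))\, x^{-(\omega-\bar{\omega})}/(-(\omega-\bar{\omega}))$. Using the Laurent expansion $\zeta(1+\omega-\bar{\omega}) = -1/(-(\omega-\bar{\omega})) + O(1)$ and $H(-(\omega-\bar{\omega})) = 1 + O(|\omega|)$, the two residues combine to give the claimed main term $\zeta(1+\omega-\bar{\omega})(1-x^{-(\omega-\bar{\omega})})(1 + O(|\omega|))$. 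The horizontal and shifted-vertical contour pieces are bounded via classical estimates for $1/\zeta$ in the zero-free region and contribute only $O(1)$.

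The second assertion then follows from the first by partial summation in the variable $r$, exactly as at the end of the proof of lemma \ref{lemma:r-sum++}. For the third assertion, the denominator is $r^{1+\omega+\bar{\omega}}$ and both $G$-factors are evaluated at $s = 0$; I would repeat the Euler-product analysis for the corresponding Dirichlet series to obtain a factorization $\widetilde{\mathcal{D}}(s) = \zeta(1+s+\omega+\bar{\omega}) \widetilde{H}(s)$ with $\widetilde{H}(0) = 1 + O(|\omega+\bar{\omega}|)$, and then apply the same contour-shift plus partial-summation argument. The error $O_R((\log\log K)^2)$ accounts for the size of the residue at $s = -(\omega+\bar{\omega})$ combined with the loss from partial summation in the regime $\mathbf{(I)}$.

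The main obstacle will be the algebraic bookkeeping in the Euler-product computation: each $G_{p,(\alpha,\beta)}(s;r;z)$ is a product of several rational functions in $p^{-s}$ with shifts by $\omega$ and $\bar{\omega}$, and verifying that $H(s)$ and $\widetilde{H}(s)$ extend holomorphically past $\Re(s) = 0$ and take values close to $1$ at the relevant evaluation points requires careful cancellation between the four local factors that appear when $p \nmid r$. Once this purely local identity is pinned down, the analytic portion of the argument is essentially identical to that of lemma \ref{lemma:r-sum++}.
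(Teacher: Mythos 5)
Your proposal is correct and follows essentially the same route as the paper: compute the local factors $G_{p,(\omega,-\bar\omega)}$ explicitly, extract the factor $\zeta(1+s+\omega-\bar\omega)$ (resp. $\zeta(1+s+\omega+\bar\omega)$) from the Euler product leaving a function $H$ holomorphic past $\Re(s)=0$ with $H(0)=1+\mathcal O(|\omega|)$, apply Perron's formula with a truncated contour shift picking up the poles at $s=0$ and at the translate of the zeta pole, and deduce the weighted versions by partial summation. The only small caveat is that for the third sum one really gets $\widetilde H(0)=1+\mathcal O(|\omega|)$ rather than $1+\mathcal O(|\omega+\bar\omega|)$, but since the integral is $\ll\log K$ and $|\omega|\ll(\log\log K)/\log K$ in regime $\mathbf{(I)}$, the discrepancy is absorbed into the $\mathcal O_R\big((\log\log K)^2\big)$ error exactly as you indicate.
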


\begin{proof}
  Recalling the definition of $G_{(\omega,-\bar{\omega})}(0;r;\omega)$ and $G_{(\omega,-\bar{\omega})}(-2\bar\omega;r;\bar\omega)$
  in lemma \ref{lemma:T}, we have
  \[
    G_{(\omega,-\bar{\omega})}(0;r;\omega) \; = \;
    G_{(\omega,-\bar{\omega})}(0;1;\omega)
    \prod_{p|r} \left( 1- \frac{1}{p^{1+\omega-\bar\omega}}\right)^{-1},
  \]
  \[
    \begin{split}
      G_{(\omega,-\bar{\omega})}(-2\bar\omega;r;\bar\omega)
      & \; = \; G_{(\omega,-\bar{\omega})}(-2\bar\omega;1;\bar\omega) \prod_{p|r} \left(1-\frac{1}{p^{1-\bar\omega+\omega}}-\frac{1}{p^{1-2\bar{\omega}}}+\frac{1}{p}\right)^{-1},
    \end{split}
  \]
  where  $G_{(\omega,-\bar{\omega})}(0;1;\omega)=1$ and
  $G_{(\omega,-\bar{\omega})}(-2\bar\omega;1;\bar\omega)=1+ \mathcal O(|\omega|)$.
  It follows that
    \[
    \begin{split}
      &\ \sum_{r} \frac{\mu(r)^2 \, \tau_{(\omega,-\bar{\omega})}(r) \,
        G_{(\omega,-\bar{\omega})}(0;r;\omega) \,
        G_{(\omega,-\bar{\omega})}(-2\bar\omega;r;\bar\omega) }{r^{1+s+\omega-\bar{\omega}}}  \\
      & \hskip 26pt = G_{(\omega,-\bar{\omega})}(-2\bar\omega;1;\bar\omega)
          \sum_{r} \frac{\mu(r)^2 \tau_{(\omega,-\bar{\omega})}(r)
          \prod\limits_{p|r} \big( 1- \frac{1}{p^{1+\omega-\bar\omega}}\big)^{-1}
          \big(1-\frac{1}{p^{1-\bar\omega+\omega}}-\frac{1}{p^{1-2\bar{\omega}}}+\frac{1}{p}\big)^{-1} }{r^{1+s+\omega-\bar{\omega}}}  \\
      & \hskip 26pt = G_{(\omega,-\bar{\omega})}(-2\bar\omega;1;\bar\omega)
      \prod_p \bigg(1+\frac{\tau_{(\omega,-\bar{\omega})}(p)
      \big( 1- \frac{1}{p^{1+\omega-\bar\omega}}\big)^{-1}
          \big(1-\frac{1}{p^{1-\bar\omega+\omega}}-\frac{1}{p^{1-2\bar{\omega}}}+\frac{1}{p}\big)^{-1} }
          {p^{1+s+\omega-\bar{\omega}}} \bigg) \\
      & \hskip 26pt =: \zeta(1+s+\omega-\bar{\omega}) \,H_{(\omega,-\bar{\omega})}(s),
    \end{split}
  \]
  where $H_{(\omega,-\bar{\omega})}(s)$ is holomorphic for $\Re(s)>-1+2|\delta|$;
  and $H_{(\omega,-\bar{\omega})}(0)=1+ \mathcal O(|\omega|)$.
  It follows from  Perron's formula that the first statement in lemma \ref{lemma:r-sum+-} holds.
 The second statement follows by the partial summation formula.
\end{proof}

By the above lemma, we get
\begin{equation}\label{eqn:PsiV<+-}
  \begin{split}
    \Psi(\omega,-\bar{\omega}) & \cV_{(\omega,-\bar{\omega})}^{\leq}(\omega)
     = -\frac{K}{4} \left(\frac{K}{4\pi}\right)^{-2\bar\omega}
        \widetilde{\Phi}(1) \left(1-M^{-(\omega-\bar{\omega})(1-\Upsilon)}\right) \left(1+ \mathcal O\big(|\omega|\big) \right)    \\
    & \qquad \cdot \Bigg( M^{-2\bar\omega} \sum_{j=2}^{\infty} \frac{P^{(j)}(0)}{(-2\bar\omega)^{j}(\log M)^j} + M^{-2\bar\omega(1-\Upsilon)} \sum_{j=2}^{\infty}
    \frac{Q^{(j)}(0)}{(-2\bar\omega)^{j}(\log M^{1-\Upsilon})^j} \Bigg) \\
    & \hskip 110pt + \; \mathcal  O\left( \frac{(\log\log K)^3}{(\log K)^2} K  \right).
  \end{split}
\end{equation}
Then, by the same argument, we obtain
\begin{equation}\label{eqn:PsiV<-+}
  \begin{split}
    \Psi(-\omega,\bar{\omega}) & \cV_{(-\omega,\bar{\omega})}^{\leq}(\omega)
     = -\frac{K}{4} \left(\frac{K}{4\pi}\right)^{-2\omega}
        \widetilde{\Phi}(1) \left(1-M^{-(-\omega+\bar{\omega})(1-\Upsilon)}\right) \left(1+\mathcal O\big(|\omega|\big) \right)    \\
    & \qquad \cdot \Bigg( M^{-2\omega} \sum_{j=2}^{\infty} \frac{P^{(j)}(0)}{(-2\omega)^{j}(\log M)^j} +
        M^{-2\omega(1-\Upsilon)} \sum_{j=2}^{\infty}
        \frac{Q^{(j)}(0)}{(-2\omega)^{j}(\log M^{1-\Upsilon})^j} \Bigg) \\
    & \hskip 110pt + \; \mathcal O\left( \frac{(\log\log K)^3}{(\log K)^2} K  \right).
  \end{split}
\end{equation}

%
%

\subsection{Contribution of the long-range terms}


We first use lemma \ref{lemma:T} and equation \eqref{eqn:S=T}
to deduce the estimate for $S_{(\alpha,\beta)}(r;z)$
when $M^{1-\Upsilon}<r\leq M$.

\subsubsection{The case $(\alpha,\beta)=(\omega,\bar{\omega})$}

In this case we have
\[
  \begin{split}
    S_{(\omega,\bar\omega)}(r;z)
      & = \underset {s=0} {\Res} \;\frac{\mu(r) \, G_{(\omega,\bar\omega)}(s;r;z)}
                {s\, \zeta(1+s+\omega+\bar\omega)} \;
                \sum_{j=0}^{\infty} \frac{1}{(s\log M)^j}
                P^{(j)}\Big(\frac{\log M/r}{\log M}\Big)
     \;  + \; \mathcal O\left(\frac{E(r)}{\log^2 M}\right).
  \end{split}
\]
Consider the Taylor expansion
$$G_{(\omega,\bar\omega)}(s;r;z)/\zeta(1+s+\omega+\bar\omega) \; = \; a_0+a_1s+a_2s^2+a_3s^3+\cdots$$

Then we have
\[
  \begin{split}
     a_0 & = G_{(\omega,\bar\omega)}(0;r;z) (\omega+\bar\omega) \; +\; \mathcal O\left(E(r)|\omega|^2\right), \\
     a_1 & = G_{(\omega,\bar\omega)}(0;r;z) \; + \; \mathcal O\big(E(r)|\omega|\big), \\
     a_n & \ll_n E(r), \quad \mathrm{for}\ n\geq2.
  \end{split}
\]
It follows that
\[
  \begin{split}
    S_{(\omega,\bar\omega)}(r;z)
      & = \mu(r)\, G_{(\omega,\bar\omega)}(0;r;z)\, \left((\omega+\bar\omega)P\Big(\frac{\log M/r}{\log M}\Big) +
                \frac{1}{\log M} P^{'}\Big(\frac{\log M/r}{\log M}\Big) \right) \\
      & \hskip 100pt + \; \mathcal O\left(E(r)|\omega|^2 + \frac{E(r)}{\log^2 M}\right).
  \end{split}
\]
By \eqref{eqn:cV<>}, we have
\[
  \begin{split}
    \cV_{(\omega,\bar{\omega})}^{>}(\omega)
    & = \sum_{M^{1-\Upsilon}<r\leq M}
        \frac{\mu(r)^2 \, \tau_{(\omega,\bar\omega)}(r) \,G_{(\omega,\bar\omega)}(0;r;\omega) \, G_{(\omega,\bar\omega)}(0;r;\bar\omega)}
        {r^{1+\omega+\bar{\omega}}} \\
    &  \hskip 76pt \cdot \left((\omega+\bar\omega)P\Big(\frac{\log M/r}{\log M}\Big) +
                \frac{1}{\log M} P^{'}\Big(\frac{\log M/r}{\log M}\Big) \right)^2 \\
    & \hskip 156pt  + \; \mathcal O\bigg(\underset{1\leq r\leq M}{{\sum}^\flat}\frac{E(r)^3}{r}\bigg(|\omega|^3+\frac{|\omega|}{\log^2 K}\bigg)\bigg).
  \end{split}
\]
Now by lemma \ref{lemma:r-sum++}, we get
\[
  \begin{split}
    \cV_{(\omega,\bar{\omega})}^{>}(\omega)
    & =  \big(1+ \mathcal O\big(|\omega|\big)\big) \int\limits_{M^{1-\Upsilon}}^{M} \bigg((\omega+\bar\omega)P\Big(\frac{\log M/t}{\log M}\Big) \;+ \; \frac{1}{\log M} P^{'}\Big(\frac{\log M/t}{\log M}\Big) \bigg)^2 \frac{dt}{t^{1+\omega+\bar\omega}}\\
 &   \hskip 120pt
      \; + \; \mathcal O\left(\frac{(\log\log K)^3}{\log^2 K}\right).
  \end{split}
\]
Making the change of variable $\frac{\log M/t}{\log M} \mapsto x$, we see that
\[
  \begin{split}
    \cV_{(\omega,\bar{\omega})}^{>}(\omega)
    & = (\log M) \big(1+ \mathcal O\big(|\omega|\big)\big) \int\limits_{0}^{\Upsilon} M^{-(1-x)(\omega+\bar\omega)}  \cdot  \left((\omega+\bar\omega)P(x) +
            \frac{1}{\log M} P^{'}(x)\right)^2 dx\\
            & \hskip 120pt
      \; + \; \mathcal O\left(\frac{(\log\log K)^3}{\log^2 K}\right).
  \end{split}
\]
Integrating by parts, we obtain
\[
  \begin{split}
    \cV_{(\omega,\bar{\omega})}^{>}(\omega)
    & = \int\limits_{0}^{\Upsilon} (\omega+\bar\omega)P(x)^2 \, d M^{-(1-x)(\omega+\bar\omega)}
          \;  + \; 2\int\limits_{0}^{\Upsilon} M^{-(1-x)(\omega+\bar\omega)}
            (\omega+\bar\omega)P(x)P^{'}(x) \;dx \\
    & \hskip 130pt + \frac{1}{\log M} \int\limits_{0}^{\Upsilon} M^{-(1-x)(\omega+\bar\omega)}
            \left(P^{'}(x)\right)^2 dx
           \; + \; \mathcal O\left(\frac{(\log\log K)^3}{\log^2 K}\right)\\
    & \hskip -30pt
    = (\omega+\bar\omega)P(x)^2 M^{-(1-x)(\omega+\bar\omega)} \bigg|_{0}^{\Upsilon}
           \; + \; \frac{1}{\log M} \int_{0}^{\Upsilon} M^{-(1-x)(\omega+\bar\omega)}
            \left(P^{'}(x)\right)^2 dx \; + \; \mathcal O\left(\frac{(\log\log K)^3}{\log^2 K}\right)\\
    &\hskip -30pt = (\omega+\bar\omega)M^{-(1-\Upsilon)(\omega+\bar\omega)}
      \;  + \; \frac{1}{\log M} \int_{0}^{\Upsilon} M^{-(1-x)(\omega+\bar\omega)}
            \left(P^{'}(x)\right)^2 dx \; + \; \mathcal O\left(\frac{(\log\log K)^3}{\log^2 K}\right).
  \end{split}
\]
It then follows from \eqref{eqn:Psi} that
\begin{equation}\label{eqn:PsiV>++}
  \begin{split}
    \Psi(\omega,\bar{\omega})\cV_{(\omega,\bar{\omega})}^{>}(\omega)
    & = \frac{K}{4} \widetilde{\Phi}(1) M^{-(1+\Upsilon)(\omega+\bar{\omega})}\big(1+ \mathcal O\big(|\omega+\bar{\omega}|\big)\big) \\
    & \hskip -50pt   + \;\frac{K}{4}  \frac{\widetilde{\Phi}(1)}{(\omega+\bar\omega)\log M} \int_{0}^{\Upsilon}
            M^{-(1-x)(\omega+\bar\omega)}
            \left(P^{'}(x)\right)^2 dx \; +
            \; \mathcal  O\left( \frac{(\log\log K)^4}{\log K} K  \right).
  \end{split}
\end{equation}

\subsubsection{The case $(\alpha,\beta)=(-\omega,-\bar{\omega})$}
In this case we have
\[
  \begin{split}
    S_{(-\omega,-\bar\omega)}(r;z)
      & = \underset {s=0}{\Res}\; \frac{\mu(r) \, G_{(-\omega,-\bar\omega)}(s;r;z) \,\zeta(1+s+2z)}
                {s\, \zeta(1+s+z-\omega) \, \zeta(1+s+z-\bar\omega)} \;
                \sum_{j=0}^{\infty} \frac{1}{(s\log M)^j}
                P^{(j)}\Big(\frac{\log M/r}{\log M}\Big) \\
       & \hskip 40pt + \; \frac{\mu(r) \, G_{(-\omega,-\bar{\omega})}(-2z;r;z) \, r^{2z}}
                {\zeta(1-z-\omega) \, \zeta(1-z-\bar{\omega})}
                 M^{-2z} \sum_{j=2}^{\infty} \frac{P^{(j)}(0)}{(-2z)^{j+1}\log^j M} \;
       + \; \mathcal O\left(\frac{E(r)}{\log^2 K}\right).
  \end{split}
\]
Consider the Taylor expansion
$$\frac{G_{(-\omega,-\bar\omega)}(s;r;z) \,\zeta(1+s+2z)}
{\zeta(1+s+z-\omega) \,\zeta(1+s+z-\bar\omega)} \; = \; a_0+a_1s+a_2s^2+a_3s^3+\cdots$$

\pagebreak

Then we have
\[
  \begin{split}
     a_0 & = 0, \\
     a_1 & = \frac{G_{(-\omega,-\bar\omega)}(0;r;z)(3z^2+z(-\omega-\bar\omega)-\omega\bar\omega)}{(2z)^2}
            + \; \mathcal O\big(E(r)|\omega|\big), \\
     a_2 & = \frac{G_{(-\omega,-\bar\omega)}(0;r;z)(z+\omega)(z+\bar\omega)}{(2z)^3} \;  + \; \mathcal  O\big(E(r)\big), \\
     a_3 & = \frac{-G_{(-\omega,-\bar\omega)}(0;r;z)(z+\omega)(z+\bar\omega)}{(2z)^4}
          \;  + \; \mathcal O\big(E(r)|\omega|^{-1}\big).
  \end{split}
\]
It follows that
\[
  \begin{split}
    S_{(-\omega,-\bar\omega)}(r;z)
      & = \mu(r) \, G_{(-\omega,-\bar\omega)}(0;r;z)
            \bigg(\frac{z-\bar{z}}{2z\log M}P'\Big(\frac{\log M/r}{\log M}\Big) \\
      & \hskip 30pt +\; \frac{\omega+\bar\omega}{(2z\log M)^2} P''\Big(\frac{\log M/r}{\log M}\Big)
      -  \frac{\omega+\bar\omega}{(2z\log M)^3} P'''\Big(\frac{\log M/r}{\log M}\Big)  \bigg) \\
      & \hskip 50pt + \;\frac{\mu(r)G_{(-\omega,-\bar{\omega})}(-2z;r;z)r^{2z}}
                {\zeta(1-z-\omega)\zeta(1-z-\bar{\omega})}
                 M^{-2z} \sum_{j=2}^{\infty} \frac{P^{(j)}(0)}{(-2z)^{j+1}\log^j M} \\
      &\hskip 125pt +\; \mathcal O\left(E(r)|\omega|^2 + \frac{E(r)}{\log^2 M}\right).
  \end{split}
\]
By \eqref{eqn:cV<>}, we have
\[
  \begin{split}
    & \cV_{(-\omega,-\bar{\omega})}^{>}(\omega) \; = \hskip -7pt\sum_{M^{1-\Upsilon}<r\leq M}
        \frac{\mu(r)^2 \tau_{(-\omega,-\bar\omega)}(r) \, G_{(-\omega,-\bar\omega)}(0;r;\omega) \, G_{(-\omega,-\bar\omega)}(0;r;\bar\omega)}
        {r^{1+\omega+\bar{\omega}}}\\
        &
        \hskip 70pt
         \cdot
        \Bigg|
            \frac{\omega-\bar{\omega}}{2\omega\log M}P'\Big(\frac{\log M/r}{\log M}\Big)\;            + \; \frac{\omega+\bar\omega}{(2\omega\log M)^2} P''\Big(\frac{\log M/r}{\log M}\Big)
          \;  - \; \frac{\omega+\bar\omega}{(2\omega\log M)^3} P'''\Big(\frac{\log M/r}{\log M}\Big) \Bigg|^2 \\
          &
          \\
    & \hskip 30pt + 2\Re \sum_{M^{1-\Upsilon}<r\leq M}
        \frac{\mu(r)^2 \tau_{(-\omega,-\bar\omega)}(r) G_{(-\omega,-\bar\omega)}(0;r;\omega)G_{(-\omega,-\bar{\omega})}(-2\bar\omega;r;\bar\omega)}
        {r^{1+\omega-\bar{\omega}}\zeta(1-\omega-\bar\omega)\zeta(1-2\bar\omega)}
          \\
    & \hskip 70pt \cdot \Bigg(M^{-2\bar\omega}\sum_{j=2}^{\infty}\frac{P^{(j)}(0)}{(-2\bar\omega)^{j+1}\log^j M}\bigg) \bigg(
            \frac{\omega-\bar{\omega}}{2\omega\log M}P'\Big(\frac{\log M/r}{\log M}\Big)\\
    & \hskip 160pt
            + \frac{\omega+\bar\omega}{(2\omega\log M)^2} P''\Big(\frac{\log M/r}{\log M}\Big)
         \;   - \; \frac{\omega+\bar\omega}{(2\omega\log M)^3} P'''\Big(\frac{\log M/r}{\log M}\Big) \Bigg)
             \\
             &
             \\
    & \hskip 10pt +\sum_{M^{1-\Upsilon}<r\leq M}
        \frac{\mu(r)^2 \tau_{(-\omega,-\bar\omega)}(r) G_{(-\omega,-\bar{\omega})}(-2\omega;r;\omega)G_{(-\omega,-\bar{\omega})}(-2\bar\omega;r;\bar\omega)}
        {r^{1-\omega-\bar{\omega}}\zeta(1-2\omega)\zeta(1-\omega-\bar\omega)^2\zeta(1-2\bar\omega)}   \cdot
        \bigg|M^{-2\omega}\sum_{j=2}^{\infty}\frac{P^{(j)}(0)}{(-2\omega)^{j+1}\log^j M}\bigg|^2 \\
    & \hskip 170pt + \; \mathcal{O}\left(\frac{(\log\log K)^3}{\log^2 K}\right).
  \end{split}
\]

Next, lemma \ref{lemma:r-sum--} implies that
\[
  \begin{split}
    \cV_{(-\omega,-\bar{\omega})}^{>}(\omega)
    & =  \big(1+ \mathcal O\big(|\omega|\big)\big) \int\limits_{M^{1-\Upsilon}}^{M} \Bigg|
            \frac{\omega-\bar{\omega}}{2\omega\log M}P'\Big(\frac{\log M/t}{\log M}\Big)
         \;   + \; \frac{\omega+\bar\omega}{(2\omega\log M)^2} P''\Big(\frac{\log M/t}{\log M}\Big) \\
    & \hskip 215pt
        - \; \frac{\omega+\bar\omega}{(2\omega\log M)^3} P'''\Big(\frac{\log M/t}{\log M}\Big) \Bigg|^2 \frac{dt}{t^{1+\omega+\bar\omega}} \\
    & + 2\Re \left\{ \hskip-10pt\phantom{\Bigg |^{\int\limits^.}}    \frac{1+ \mathcal O(|\omega|)}{\zeta(1-\omega-\bar\omega)} \bigg(M^{-2\bar\omega}\sum_{j=2}^{\infty} \frac{P^{(j)}(0)}{(-2\bar\omega\log M)^j}\bigg)\right.
      \\
    &\hskip -60pt \left. \cdot \int\limits_{M^{1-\Upsilon}}^{M}  \Bigg(
            \frac{\omega-\bar{\omega}}{2\omega\log M}P'\Big(\frac{\log M/t}{\log M}\Big)
            + \frac{\omega+\bar\omega}{(2\omega\log M)^2} P''\Big(\frac{\log M/t}{\log M}\Big) \; - \; \frac{\omega+\bar\omega}{(2\omega\log M)^3} P'''\Big(\frac{\log M/t}{\log M}\Big) \Bigg) \frac{dt}{t^{1+\omega-\bar\omega}} \phantom{\Bigg |^{\int\limits^.}} \hskip -10pt \right\} \\
            &
            \\
    &\hskip -10pt  + \; \frac{1+ \mathcal O\big(|\omega|\big)}{\zeta(1-\omega-\bar\omega)} \left(M^{(1-\Upsilon)(\omega+\bar\omega)}-M^{\omega+\bar\omega}\right)
      \cdot  \left | M^{-2\omega}\sum_{j=2}^{\infty}\frac{P^{(j)}(0)}{(-2\omega\log M)^j}\right |^2 \; + \; \mathcal O\left(\frac{(\log\log K)^3}{\log^2 K}\right).
  \end{split}
\]

Making the change of variable $\frac{\log M/t}{\log M} \mapsto x$, we see that
\[
  \begin{split}
    \cV_{(-\omega,-\bar{\omega})}^{>}(\omega)
    & = \frac{1+ \mathcal O\big(|\omega|\big)}{\log M}\int\limits_{0}^{\Upsilon} M^{-(1-x)(\omega+\bar\omega)}\; \Bigg|
            \frac{\omega-\bar{\omega}}{2\omega}P'(x)
           \; + \; \frac{\omega+\bar\omega}{(2\omega)^2\log M} P''(x) \\
    & \hskip 250pt
        - \;\frac{\omega+\bar\omega}{(2\omega)^3(\log M)^2} P'''(x) \Bigg|^2 dx \\
    &\hskip -15pt  + 2\Re \left\{ \hskip-10pt\phantom{\Bigg |^{\int\limits^.}}   \frac{1+ \mathcal O\big(|\omega|)}{\zeta(1-\omega-\bar\omega\big)}
            \bigg(M^{-2\bar\omega}\sum_{j=2}^{\infty}\frac{P^{(j)}(0)}{(-2\bar\omega\log M)^j}\bigg)
        \int\limits_{0}^{\Upsilon} M^{-(1-x)(\omega-\bar\omega)}\right.  \\
    & \hskip 100pt
       \left.  \cdot \bigg( \frac{\omega-\bar{\omega}}{2\omega}P'(x)
            + \frac{\omega+\bar\omega}{(2\omega)^2\log M} P''(x)
            - \frac{\omega+\bar\omega}{(2\omega)^3(\log M)^2} P'''(x) \bigg) dx \hskip-10pt\phantom{\Bigg |^{\int\limits^.}}  \right\}
 \\
            &
            \\
    & \hskip -30pt + \; \frac{1+ \mathcal O\big(|\omega|\big)}{\zeta(1-\omega-\bar\omega)}
        \left(M^{(1-\Upsilon)(\omega+\bar\omega)}-M^{\omega+\bar\omega}\right)
       \cdot \left | M^{-2\omega}\sum_{j=2}^{\infty}\frac{P^{(j)}(0)}{(-2\omega\log M)^j}\right |^2    + \; \mathcal O\left(\frac{(\log\log K)^3}{\log^2 K}\right).
  \end{split}
\]
Hence by \eqref{eqn:Psi}, we obtain
\begin{equation}\label{eqn:PsiV>--}
  \begin{split}
    & \Psi(-\omega,-\bar{\omega})\cV_{(-\omega,-\bar{\omega})}^{>}(\omega) \;  = \; \frac{K}{4} \left(\frac{K}{4\pi}\right)^{-2(\omega+\bar{\omega})}
        \widetilde{\Phi}(1) \Big(1+ \mathcal O(|\omega|\big)\Big) \\
    &  \cdot \left[
    \hskip-10pt\phantom{\Bigg |^{\int\limits^.}}
     \frac{-1}{(\omega+\bar{\omega})\log M}
            \int\limits_{0}^{\Upsilon} M^{-(1-x)(\omega+\bar\omega)} \left |
            \frac{\omega-\bar{\omega}}{2\omega}P'(x)
            + \frac{\omega+\bar\omega}{(2\omega)^2\log M} P''(x)
            - \frac{\omega+\bar\omega}{(2\omega)^3(\log M)^2} P'''(x) \right |^2 dx \right. \\
    & \hskip 20pt + 2\Re   \left\{
    \hskip-10pt\phantom{\Bigg |^{\int\limits^.}}
       M^{-2\bar\omega} \sum_{j=2}^{\infty}\frac{P^{(j)}(0)}{(-2\bar\omega\log M)^j}
             \int\limits_{0}^{\Upsilon} M^{-(1-x)(\omega-\bar\omega)} \bigg(
            \frac{\omega-\bar{\omega}}{2\omega}P'(x)\right. \\
    & \hskip 200pt
        \left.    + \frac{\omega+\bar\omega}{(2\omega)^2\log M} P''(x)
            - \frac{\omega+\bar\omega}{(2\omega)^3(\log M)^2} P'''(x) \bigg) dx
            \hskip-10pt\phantom{\Bigg |^{\int\limits^.}}
            \right \} \\
    & \hskip 106pt + \left. \left(M^{(1-\Upsilon)(\omega+\bar\omega)}-M^{\omega+\bar\omega}\right)
        \left |M^{-2\omega}\sum_{j=2}^{\infty}\frac{P^{(j)}(0)}{(-2\omega\log M)^j}\right |^2
        \hskip-8pt\phantom{\Bigg |^{\int\limits^.}}
         \right]\\
         &\hskip 160pt  + \; \mathcal  O\left( \frac{(\log\log K)^4}{\log K} K  \right).
  \end{split}
\end{equation}

\subsubsection{The cases $(\alpha,\beta)=(\omega,-\bar{\omega})$ and $(\alpha,\beta)=(-\omega,\bar{\omega})$}

We first consider the case $(\alpha,\beta)=(\omega,-\bar{\omega})$.
By lemma \ref{lemma:T}, we have
\[
  \begin{split}
    S_{(\omega,-\bar\omega)}(r;\omega)
      & = \underset {s=0}{\Res} \; \frac{\mu(r) \, G_{(\omega,-\bar\omega)}(s;r;\omega)}
                {s\zeta(1+s+\omega-\bar\omega)}
                \sum_{j=0}^{\infty} \frac{1}{(s\log M)^j}
                P^{(j)}\Big(\frac{\log M/r}{\log M}\Big)
      \; + \; \mathcal O\left(\frac{E(r)}{\log^2 K}\right),
  \end{split}
\]
and
\[
  \begin{split}
    S_{(\omega,-\bar\omega)}(r;\bar\omega)
      & =  \underset {s=0} {\Res} \; \frac{\mu(r) \, G_{(\omega,-\bar\omega)}(s;r;\bar\omega) \, \zeta(1+s+2\bar\omega)}
                {s\zeta(1+s+\omega+\bar\omega) \, \zeta(1+s)}
                \sum_{j=0}^{\infty} \frac{1}{(s\log M)^j}
                P^{(j)}\Big(\frac{\log M/r}{\log M}\Big) \\
      & \quad + \frac{\mu(r) \, G_{(\omega,-\bar{\omega})}(-2\bar\omega;r;\bar\omega) \, r^{2\bar\omega}}
                {\zeta(1+\omega-\bar\omega)\zeta(1-2\bar{\omega})}
                 M^{-2\bar\omega} \sum_{j=2}^{\infty} \frac{P^{(j)}(0)}{(-2\bar\omega)^{j+1}\log^j M}
      \; + \; \mathcal O\left(\frac{E(r)}{\log^2 K}\right).
  \end{split}
\]
We first write the Taylor expansion of
$G_{(\omega,-\bar\omega)}(s;r;\omega)/\zeta(1+s+\omega-\bar\omega)$
as $a_0+a_1s+a_2s^2+a_3s^3+\cdots$.
Then we have
\[
  \begin{split}
     a_0 & = G_{(\omega,-\bar\omega)}(0;r;\omega) (\omega-\bar\omega) \; + \; \mathcal O\left(E(r)|\omega|^2\right), \\
     a_1 & = G_{(\omega,-\bar\omega)}(0;r;\omega) \;  + \; \mathcal O\big(E(r)|\omega|\big), \\
     a_2 & \ll_n E(r), \quad \mathrm{if}\ n\geq2.
  \end{split}
\]
Next, consider the Taylor expansion
$$\frac{G_{(\omega,-\bar\omega)}(s;r;\bar\omega) \,\zeta(1+s+2\bar\omega)}
{\zeta(1+s+\omega+\bar\omega) \,\zeta(1+s)} \; = \; b_0+b_1s+b_2s^2+b_3s^3+\cdots$$
Then we have
\[
  \begin{split}
     b_0 & = 0, \\
     b_1 & = \frac{G_{(\omega,-\bar\omega)}(0;r;\bar\omega)\, (\omega+\bar\omega)}{2\bar\omega}
           \; + \; \mathcal O\big(E(r)|\omega|\big), \\
     b_2 & = \frac{G_{(\omega,-\bar\omega)}(0;r;\bar\omega)\, (\bar\omega-\omega)}{(2\bar\omega)^2} \;  + \; \mathcal O\big(E(r)\big), \\
     b_3 & = \frac{-G_{(\omega,-\bar\omega)}(0;r;\bar\omega)\,(\bar\omega-\omega)}{(2\bar\omega)^3}
         \;   +\; \mathcal  O\left(E(r)|\omega|^{-1}\right).
  \end{split}
\]
So we get
\[
  \begin{split}
    S_{(\omega,-\bar\omega)}(r;\omega)
      & = \mu(r)G_{(\omega,-\bar\omega)}(0;r;\omega)
           \left( (\omega-\bar\omega)P\Big(\frac{\log M/r}{\log M}\Big)
           + \frac{1}{\log M}    P'\Big(\frac{\log M/r}{\log M}\Big) \right) \\
      & \hskip 265pt  + \; \mathcal O\left(\frac{\log\log K}{\log^2 K}\right),
  \end{split}
\]
and
\[
  \begin{split}
    S_{(\omega,-\bar\omega)}(r;\bar\omega)
      & = \mu(r)G_{(\omega,-\bar\omega)}(0;r;\bar\omega)
            \Bigg( \frac{\omega+\bar\omega}{2\bar\omega\log M}P'\Big(\frac{\log M/r}{\log M}\Big) \\
      & \hskip 30pt
                + \frac{\bar\omega-\omega}{(2\bar\omega\log M)^2}
                    P''\Big(\frac{\log M/r}{\log M}\Big)
                + \frac{\omega-\bar\omega}{(2\bar\omega\log M)^3}
                    P'''\Big(\frac{\log M/r}{\log M}\Big)\Bigg) \\
      & \hskip 50pt + \frac{\mu(r) \, G_{(\omega,-\bar{\omega})}(-2\bar\omega;r;\bar\omega) \, r^{2\bar\omega}}
                {\zeta(1+\omega-\bar\omega)}
                 M^{-2\bar\omega} \sum_{j=2}^{\infty} \frac{P^{(j)}(0)}{(-2\bar\omega\log M)^j}
       + \; \mathcal O\left(\frac{\log\log K}{\log^2 K}\right).
  \end{split}
\]
By \eqref{eqn:cV<>}, we have
\[
  \begin{split}
    & \cV_{(\omega,-\bar{\omega})}^{>}(\omega) \; = \sum_{M^{1-\Upsilon}<r\leq M}
        \frac{\mu(r)^2 \, \tau_{(\omega,-\bar\omega)}(r) \, G_{(\omega,-\bar\omega)}(0;r;\omega) \, G_{(\omega,-\bar\omega)}(0;r;\bar\omega)}
        {r^{1+\omega+\bar{\omega}}}
        \\
    &  \hskip 20pt \cdot \left(
    \hskip-10pt
    \phantom{\Bigg|^1}
    \left(
     (\omega-\bar\omega)P\Big(\frac{\log M/r}{\log M}\Big)
           + \frac{1}{\log M}    P'\Big(\frac{\log M/r}{\log M}\Big) \right)
           \bigg( \frac{\omega+\bar\omega}{2\bar\omega\log M}P'\Big(\frac{\log M/r}{\log M}\Big)\right. \\
    & \hskip 167pt   \left.  + \;\frac{\bar\omega-\omega}{(2\bar\omega\log M)^2}
                    P''\Big(\frac{\log M/r}{\log M}\Big)
                + \frac{\omega-\bar\omega}{(2\bar\omega\log M)^3}
                    P'''\Big(\frac{\log M/r}{\log M}\Big)
                    \phantom{\Bigg|^1} \hskip-10pt
                    \right) \\
                    &
                    \\
    & \quad\ + \sum_{M^{1-\Upsilon}<r\leq M}
        \frac{\mu(r)^2 \tau_{(\omega,-\bar\omega)}(r) G_{(\omega,-\bar\omega)}(0;r;\omega)G_{(\omega,-\bar{\omega})}(-2\bar\omega;r;\bar\omega)}
        {r^{1+\omega-\bar{\omega}}\zeta(1+\omega-\bar\omega)} \\
    & \hskip 100pt \cdot
        \bigg(M^{-2\bar\omega} \sum_{j=2}^{\infty} \frac{P^{(j)}(0)}{(-2\bar\omega\log M)^j}\bigg)
        \left( (\omega-\bar\omega)P\Big(\frac{\log M/r}{\log M}\Big)
           + \frac{1}{\log M}    P'\Big(\frac{\log M/r}{\log M}\Big) \right)
             \\
             &
             \\
    & \hskip 190pt +\; \mathcal O\left(\frac{(\log\log K)^3}{\log^2 K}\right).
  \end{split}
\]
Applying lemma \ref{lemma:r-sum--}, with
the change of variables from $\frac{\log M/t}{\log M}$ to $x$,
 together with \eqref{eqn:Psi}, we see that
\begin{equation}\label{eqn:PsiV>+-}
  \begin{split}
    &  \Psi(\omega,-\bar{\omega}) \cV_{(\omega,-\bar{\omega})}^{>}(\omega) \; =  \; - \frac{K}{4} \left(\frac{K}{4\pi}\right)^{-2\bar{\omega}}
        \widetilde{\Phi}(1) (1+\mathcal{O}(|\omega|)) \\
    & \hskip 20pt \cdot \left[ \frac{1}{(\omega-\bar{\omega})\log M}
            \int\limits_{0}^{\Upsilon} M^{-(1-x)(\omega+\bar\omega)}
            \left( (\omega-\bar\omega)(\log M) P(x) + P'(x)\right)
            \right. \\
    &  \hskip 150pt
            \cdot \Big( \frac{\omega+\bar\omega}{2\bar\omega}P'(x)
                + \frac{\bar\omega-\omega}{(2\bar\omega)^2\log M} P''(x)
                + \frac{\omega-\bar\omega}{(2\bar\omega)^3(\log M)^2}
                    P'''(x)\Big)  dx \\
    & \hskip 50pt \left.
     + \; M^{-2\bar\omega} \sum_{j=2}^{\infty}\frac{P^{(j)}(0)}{(-2\bar\omega\log M)^j}
        \int\limits_{0}^{\Upsilon} M^{-(1-x)(\omega-\bar\omega)}
            \cdot \Big( (\omega-\bar\omega)(\log M) P(x) + P'(x)\Big) \; dx \right] \\
    & \hskip 160pt + \; \mathcal O\left( \frac{(\log\log K)^4}{\log K} K  \right).
  \end{split}
\end{equation}
By the same argument we also obtain
\begin{equation}\label{eqn:PsiV>-+}
  \begin{split}
    &  \Psi(-\omega,+\bar{\omega}) \cV_{(-\omega,\bar{\omega})}^{>}(\omega) \; = \; - \frac{K}{4} \left(\frac{K}{4\pi}\right)^{-2\omega}
        \widetilde{\Phi}(1) (1+\mathcal{O}\big(|\omega|)\big) \\
    & \hskip 30pt  \cdot \left[ \frac{1}{(-\omega+\bar{\omega})\log M}
            \int\limits_{0}^{\Upsilon} M^{-(1-x)(\omega+\bar\omega)}
            \left( (-\omega+\bar\omega)(\log M) P(x) + P'(x)\right) \right. \\
    &  \hskip 80pt
            \cdot \Big( \frac{\omega+\bar\omega}{2\bar\omega}P'(x)
                + \frac{\omega-\bar\omega}{(2\bar\omega)^2\log M} P''(x)
                - \frac{\omega-\bar\omega}{(2\bar\omega)^3(\log M)^2}
                    P'''(x)\Big)  dx \\
    & \hskip 5pt \left.
     +\;  M^{-2\omega} \sum_{j=2}^{\infty}\frac{P^{(j)}(0)}{(-2\omega\log M)^j}
        \int\limits_{0}^{\Upsilon} M^{-(1-x)(-\omega+\bar\omega)}
            \cdot \Big( (-\omega+\bar\omega)(\log M) P(x) + P'(x)\Big) \,dx \right] \\
    & \hskip 120pt + \; \mathcal O\left( \frac{(\log\log K)^4}{\log K} K  \right).
  \end{split}
\end{equation}

\subsection{Conclusion}
Recall the harmonic weight
$$\omega_f := \frac{12\zeta(2)}{(k-1)}\cdot \frac{1}{ L(1,\operatorname{sym}^2 f)}.$$
Now $\Phi$ is a non-negative smooth function supported on $[1,2]$
such that $\Phi(t)\ll1$, and $\int\limits_{1}^{2}\Phi(t)dt\gg1$. It follows from (\ref{eq:harmonicsum}) that
 \begin{align} \label{A(kPhi-asymp}
    \cA(K, \Phi)
     & = \sum_{k\equiv2(4)}\Phi\left(\frac{k-1}{K}\right)\underset{f\in H_k}{{\sum}}\omega_f \\
     & = \sum_{k\equiv2(4)}\Phi\left(\frac{k-1}{K}\right) \left(1+ \mathcal O\left((2^{-k}\right)\right)
    \; = \; \frac{K}{4}\widetilde{\Phi}(1) \; + \; \mathcal  O\left(K^{-B}\right). \nonumber
  \end{align}

Combine \eqref{eqn:AtoS} and \eqref{eqn:PsiV<++}--\eqref{eqn:PsiV>-+} and make the change of variables
$\delta=\frac{u}{\log K}$,  $\; t=\frac{v}{\log K}$.
If we then take $M=K^{1-5\vartheta}$, it follows that
\begin{equation}\label{eqn:A<}
  \frac{1}{\cA(K,\Phi)}\cA\Big(\big\{|LM(1/2+\omega,f)|^2\big\}; K, \Phi\Big)
 \;  = \; \sV(u,v) \; + \; \mathcal O\left(\frac{(\log\log K)^4}{\log K}\right).
\end{equation}
Here
\begin{equation}\label{eqn:V()}
  \sV(u,v) := \sV_1(u,v) + \sV_2(u,v) + \sV_3(u,v),
\end{equation}

\begin{equation*}
  \begin{split}
    \sV_1(u,v) :=  1 +
            \frac{1}{2u(1-5\vartheta)}
            \int\limits_{0}^{\Upsilon} e^{-2u(1-x)(1-5\vartheta)}
            \left(P^{'}(x)\right)^2 dx,
  \end{split}
\end{equation*}
\begin{equation*}
  \begin{split}
    & \sV_2(u,v)  := e^{-4u}
        \left[  \hskip -156pt
          \phantom{\left|  \sum_{j=2}^{\infty}\frac{Q^{(j)}(0)}{(-2(u+iv)(1-5\vartheta)(1-\Upsilon))^j} \right |^2 }
         \frac{-1}{2u(1-5\vartheta)}
            \int\limits_{0}^{\Upsilon} e^{-2u(1-x)(1-5\vartheta)}
             \right.
             \\
            &
            \hskip 90pt \cdot \Bigg |
            \frac{iv}{u+iv}P'(x) \;
            +\;  \frac{u}{2(u+iv)^2(1-5\vartheta)} P''(x)
           \;  - \; \frac{u}{4(u+iv)^3(1-5\vartheta)^2} P'''(x) \Bigg|^2 dx  \\
    & \hskip 60pt + 2\Re \left\{   e^{-2(1-5\vartheta)(u-iv)} \sum_{j=2}^{\infty}\frac{P^{(j)}(0)}{(-2(u-iv)(1-5\vartheta))^j}
            \right. \\
    & \hskip 5pt
    \left.
     \cdot \int\limits_{0}^{\Upsilon} e^{-2iv(1-x)(1-5\vartheta)}\bigg( \frac{iv}{u+iv}P'(x)
            + \frac{u}{2(u+iv)^2(1-5\vartheta)} P''(x)
            - \frac{u}{4(u+iv)^3(1-5\vartheta)^2} P'''(x) \bigg) \;  dx\right \} \\
    & \hskip 100pt + \left(e^{2u(1-5\vartheta)(1-\Upsilon)}-e^{2u(1-5\vartheta)}\right)
        \left | \, e^{-2(u+iv)(1-5\vartheta)}\sum_{j=2}^{\infty}\frac{P^{(j)}(0)}{(-2(u+iv)(1-5\vartheta))^j} \,\right |^2 \\
    & \hskip 135pt + \left(1-e^{2u(1-5\vartheta)(1-\Upsilon)}\right)
            \left | e^{-2(u+iv)(1-5\vartheta)}\sum_{j=2}^{\infty}\frac{P^{(j)}(0)}{(-2(u+iv)(1-5\vartheta))^j} \right. \\
    & \hskip 165pt
        \left.  \left.  + \; e^{-2(u+iv)(1-5\vartheta)(1-\Upsilon)}
            \sum_{j=2}^{\infty}\frac{Q^{(j)}(0)}{(-2(u+iv)(1-5\vartheta)(1-\Upsilon))^j} \right |^2
      \phantom{\Bigg\{^{\int_0^1}}   \hskip -15pt
             \right] ,
  \end{split}
\end{equation*}
\begin{equation*}
  \begin{split}
    & \sV_3(u,v)  :=  - 2\Re \left\{ e^{-2(u+iv)} \left[ \frac{1}{-2iv(1-5\vartheta)}
            \int\limits_{0}^{\Upsilon} e^{-2u(1-x)(1-5\vartheta)} \Big( -2iv(1-5\vartheta)P(x) + P'(x)\Big)
            \right. \right. \\
    &  \hskip 67pt
            \cdot
            \Bigg( \frac{u}{(u+iv)}P'(x)\;
                + \; \frac{iv}{2(u+iv)^2(1-5\vartheta)} P''(x)
               \; -  \; \frac{iv}{4(u+iv)^3((1-5\vartheta))^2} P'''(x)\Bigg) \, dx \\
    & \hskip 10pt +  e^{-2(u+iv)(1-5\vartheta)}
        \sum_{j=2}^{\infty}\frac{P^{(j)}(0)}{(-2(u+iv)(1-5\vartheta))^j}
        \,  \int\limits_{0}^{\Upsilon} e^{2iv(1-x)(1-5\vartheta)}
        \Big( -2iv(1-5\vartheta) P(x) + P'(x)\Big) \, dx \\
    & \hskip 60pt + \left(1-e^{2iv(1-\Upsilon)(1-5\vartheta)}\right)
        \Bigg( e^{-2(u+iv)(1-5\vartheta)}\sum_{j=2}^{\infty}\frac{P^{(j)}(0)}{(-2(u+iv)(1-5\vartheta))^j} \\
    & \hskip 155pt
   \left. \left.         + \; e^{-2(u+iv)(1-5\vartheta)(1-\Upsilon)}
                \sum_{j=2}^{\infty}\frac{Q^{(j)}(0)}{(-2(u+iv)(1-5\vartheta)(1-\Upsilon))^j} \Bigg)
                 \phantom{\int\limits_{0}^{\Upsilon}}\hskip-7pt \right] \right\}.
  \end{split}
\end{equation*}

We  end this section by proving the following  upper bound for $\sV(u,v)$,
which will be used when $u$ is large.

\begin{lemma}\label{lemma:V-l}
 Choose
  \begin{align*}
  &   P(x)=3\left(\frac{x}{\Upsilon}\right)^2-2\left(\frac{x}{\Upsilon}\right)^3, \qquad
    Q(x) := 1-P\Big(\Upsilon+(1-\Upsilon)x\Big),\\
  & \vartheta=10^{-10}, \qquad
      \Upsilon=0.64, \qquad S= \frac{\pi}{4(1-\Upsilon)(1-20\vartheta)}, \qquad
      R=4.
    \end{align*}

  For $u\geq 10$ and $|v|\leq 5u$, we have
  \begin{equation*}
    \sV(u,v) \leq 1 + e^{-u/2}.
  \end{equation*}
\end{lemma}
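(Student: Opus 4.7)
The plan is to bound $\sV_1(u,v)$, $\sV_2(u,v)$, and $\sV_3(u,v)$ separately, showing that $\sV_1-1$ together with $|\sV_2|$ and $|\sV_3|$ are each much smaller than $e^{-u/2}$ once $u\geq 10$. Throughout, the assumption $|v|\leq 5u$ ensures $|u+iv|\asymp u$, so every factor of the form $|u/(u+iv)^k|$ or $|iv/(u+iv)^k|$ is uniformly $O(u^{1-k})$, and each series $\sum_{j\geq 2}P^{(j)}(0)/(-2(u+iv)(1-5\vartheta))^j$ is $O(u^{-2})$.

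\textbf{Bounding $\sV_1$.} Since $P$ is a fixed cubic, $(P')^2$ is uniformly bounded on $[0,\Upsilon]$, and a direct estimate of the exponential integral gives
\[
  \sV_1(u,v)-1 \;\ll\; \frac{1}{u}\int_0^{\Upsilon} e^{-2u(1-x)(1-5\vartheta)}\,dx \;\ll\; \frac{e^{-2u(1-\Upsilon)(1-5\vartheta)}}{u^2}.
\]
With $\Upsilon=0.64$ and $\vartheta=10^{-10}$ the decay rate is at least $0.71u$, so $\sV_1-1$ is negligible compared to $e^{-u/2}$ for $u\geq 10$.

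\textbf{Bounding $\sV_2$.} The overall prefactor is $e^{-4u}$, and the four summands in the bracket carry additional exponential factors $1$, $e^{-2u(1-5\vartheta)}$, $e^{2u(1-5\vartheta)(1-\Upsilon)}-e^{2u(1-5\vartheta)}$, and $1-e^{2u(1-5\vartheta)(1-\Upsilon)}$, each combined with $|e^{-2(u+iv)(1-5\vartheta)}|^2=e^{-4u(1-5\vartheta)}$ or $|e^{-2(u+iv)(1-5\vartheta)(1-\Upsilon)}|^2=e^{-4u(1-5\vartheta)(1-\Upsilon)}$. A term-by-term check shows that after all these exponentials are combined with the $e^{-4u}$ prefactor, the net decay of each summand is at least $e^{-4.71u}/u^2$; thus $|\sV_2(u,v)|\ll e^{-4.71u}/u^2$.

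\textbf{Bounding $\sV_3$.} The prefactor $e^{-2(u+iv)}$ has modulus $e^{-2u}$. The decisive simplification occurs in the second bracket summand: integration by parts together with $P(0)=0$ and $P(\Upsilon)=1$ gives
\[
  \int_0^{\Upsilon} e^{2iv(1-x)(1-5\vartheta)}\bigl(-2iv(1-5\vartheta)P(x)+P'(x)\bigr)\,dx \;=\; e^{2iv(1-\Upsilon)(1-5\vartheta)},
\]
which immediately makes that summand $O(e^{-2u(1-5\vartheta)}/u^2)$. The first bracket summand carries a prefactor $1/(-2iv(1-5\vartheta))$ which is singular at $v=0$. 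I would resolve this by writing $u/(u+iv)=1-iv/(u+iv)$: the piece producing the singularity is then a real integral divided by $iv$, and after taking the real part against $e^{-2(u+iv)}=e^{-2u}(\cos 2v-i\sin 2v)$ it becomes proportional to $\sin(2v)/v$, which is bounded by $2$ and regular at $v=0$. The remaining portions of the first summand and the whole third summand are polynomially bounded in $u$, so $|\sV_3(u,v)|\ll e^{-2u}u^{k}$ for some small $k$. Combining the three bounds, $\sV(u,v)\leq 1+e^{-u/2}$ for $u\geq 10$, with substantial margin.

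The main obstacle is the apparent $1/v$ singularity in $\sV_3$: globally, $\sV(u,v)$ comes from a second moment and must be regular in $v$, but displaying this regularity explicitly requires either the integration-by-parts and $\sin(2v)/v$ cancellations outlined above, or a separate treatment of $|v|\leq 1$ and $|v|>1$. Once the regularity is made manifest, the remaining bounds are routine exponential estimates.
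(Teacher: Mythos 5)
Your proposal is correct and follows essentially the same route as the paper's proof: bound $\sV_1$, $\sV_2$, $\sV_3$ separately by elementary exponential estimates, and resolve the apparent $1/v$ singularity in $\sV_3$ by taking the real part against $e^{-2(u+iv)}$ — your $\sin(2v)/v$ cancellation is precisely the paper's bound on the isolated term $\sV_{31}$ via $\Re\left\{e^{-2iv}/(-2iv(u+iv))\right\}$. Your exact integration-by-parts identity $\int_0^{\Upsilon}e^{2iv(1-x)(1-5\vartheta)}\left(-2iv(1-5\vartheta)P(x)+P'(x)\right)dx=e^{2iv(1-\Upsilon)(1-5\vartheta)}$ is a small refinement the paper does not use; the only thing left to finish is to make the implied constants explicit (as the paper does via $|P'|\leq 3/(2\Upsilon)$, $|P''|\leq 6/\Upsilon^2$, $P'''=12/\Upsilon^3$) so that the stated inequality $\sV(u,v)\leq 1+e^{-u/2}$ follows numerically for $u\geq 10$.
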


\begin{proof}
  To prove the lemma, we will show that for $u\geq 10$ and $|v|\leq 5u$,
  \begin{equation}\label{eqn:Vj<-l}
    \begin{split}
       \sV_1(u,v) & \leq 1 + \frac{1}{2}e^{-u/2}, \\
       \sV_2(u,v) & \leq e^{-4u}, \\
       \sV_3(u,v) & \leq e^{-2u}.
    \end{split}
  \end{equation}
  In fact, much better bounds can be proved,
  but this will be good enough for our applications.

  Recall the definition of $\sV_j(u,v)$ above.
  Using the fact that $|P'(x)|\leq 3/(2\Upsilon)$ if $x\in[0,\Upsilon]$,
  one may easily obtain the first inequality in \eqref{eqn:Vj<-l}.
  Indeed, we have
  \begin{equation*}
    \begin{split}
      1 \; \leq \; \sV_1(u,v)  \; \leq \; 1 +
            \frac{\Upsilon}{2u(1-5\vartheta)}
             e^{-2u(1-\Upsilon)(1-5\vartheta)}
            \left(\frac{3}{2\Upsilon}\right)^2  \leq \; 1 + \frac{1}{2}e^{-u/2},
    \end{split}
  \end{equation*}
  for $u\geq10$.  Next we will consider $\sV_2(u,v)$.
  For $x\in[0,\Upsilon]$
  \[
    |P''(x)|\leq \frac{6}{\Upsilon^2},\qquad
    P'''(x)= \frac{12}{\Upsilon^3}.
  \]
  Hence
  \begin{equation*}
    \begin{split}
      |\sV_2(u,v)|
      & \leq e^{-4u}
        \Bigg\{ \frac{\Upsilon}{2u(1-5\vartheta)}
            e^{-2u(1-x)(1-5\vartheta)} \bigg(
            \Big(\frac{3}{2\Upsilon}\Big)^2  + \Big(\frac{6}{u\Upsilon^2}\Big)^2
            + \Big(\frac{12}{u^2\Upsilon^3}\Big)^2 \bigg) \\
      & \hskip 70pt + 2 \bigg(   e^{-2u(1-5\vartheta)}
            \Big(\frac{6}{u^2\Upsilon^2}+\frac{12}{u^3\Upsilon^3}\Big)
            \Upsilon  \Big( \frac{3}{2\Upsilon}  + \frac{6}{u\Upsilon^2}
            + \frac{12}{u^2\Upsilon^3} \Big) \bigg) \\
      & \hskip 150pt + 6e^{-2u(1-5\vartheta)(1-\Upsilon)}
        \bigg(\Big(\frac{6}{u^2\Upsilon^2}\Big)^2+\Big(\frac{12}{u^3\Upsilon^3}\Big)^2\bigg) \Bigg\}.
            \end{split}
  \end{equation*}
  For $u\geq10$, we have $|\sV_2(u,v)|\leq e^{-4u}$,
  which gives the second inequality in \eqref{eqn:Vj<-l}.
  Finally, we will bound $\sV_3(u,v)$.
  Note that
  \begin{equation*}\label{eqn:V3to}
    \sV_3(u,v) = \sV_{31}(u,v) + \sV_{32}(u,v),
  \end{equation*}
  where
  \begin{align*}
     & \sV_{31}(u,v)  :=  - 2\Re \left\{ e^{-2(u+iv)}\frac{u}{-2iv(u+iv)(1-5\vartheta)}
            \int\limits_{0}^{\Upsilon} e^{-2u(1-x)(1-5\vartheta)} \left(P'(x)\right)^2 dx \right\},\\
            &
            \\
    &  \sV_{32}(u,v)  :=  - 2\Re \left\{ e^{-2(u+iv)} \left[\;
            \int\limits_{0}^{\Upsilon} e^{-2u(1-x)(1-5\vartheta)} P(x) \bigg( \frac{u}{(u+iv)}P'(x)
              \right. \right.
              \\
      &  \hskip 146pt
                + \frac{iv}{2(u+iv)^2(1-5\vartheta)} P''(x)
                - \frac{iv}{4(u+iv)^3((1-5\vartheta))^2} P'''(x)\bigg)  dx
               \\
      &  - \frac{1}{2(1-5\vartheta)}
            \int\limits_{0}^{\Upsilon} e^{-2u(1-x)(1-5\vartheta)} P'(x) \bigg(  \frac{1}{2(u+iv)^2(1-5\vartheta)} P''(x)
                - \frac{1}{4(u+iv)^3((1-5\vartheta))^2} P'''(x)\bigg)  dx \\
      &\hskip 15pt  +  e^{-2(u+iv)(1-5\vartheta)}
        \sum_{j=2}^{\infty}\frac{P^{(j)}(0)}{(-2(u+iv)(1-5\vartheta))^j}  \int\limits_{0}^{\Upsilon} e^{2iv(1-x)(1-5\vartheta)}
        \Big( -2iv(1-5\vartheta) P(x) + P'(x)\Big) dx \\
      & \hskip 90pt  + \left(1-e^{2iv(1-\Upsilon)(1-5\vartheta)}\right)
        \bigg( e^{-2(u+iv)(1-5\vartheta)}\sum_{j=2}^{\infty}\frac{P^{(j)}(0)}{(-2(u+iv)(1-5\vartheta))^j} \\
      & \hskip 160pt
      \left. \left.
            + \; e^{-2(u+iv)(1-5\vartheta)(1-\Upsilon)}
                \sum_{j=2}^{\infty}\frac{Q^{(j)}(0)}{(-2(u+iv)(1-5\vartheta)(1-\Upsilon))^j} \bigg) \right] \right\}.
  \end{align*}
  By the same argument as above we can show that $|\sV_{32}(u,v)|\leq \frac{1}{2}e^{-2u}$.
  We also have
  \[
    \begin{split}
       |\sV_{31}(u,v)| & \leq \frac{2ue^{-2u}}{(1-5\vartheta)}
        \left(\; \int\limits_{0}^{\Upsilon} e^{-2u(1-x)(1-5\vartheta)} \left(P'(x)\right)^2 dx\right)
        \bigg| \Re\Big\{ \frac{e^{-2iv}}{-2iv(u+iv)} \Big\} \bigg| \leq \frac{1}{2} e^{-2u}.
    \end{split}
  \]
  This establishes the third inequality in \eqref{eqn:Vj<-l} and
   completes the proof of the lemma.
\end{proof}


\section{The harmonic mollified second moment away from the critical point}



In this section, we get a bound for
$\frac{1}{\cA(K, \Phi)}\cA\Big(\left\{ |LM(1/2+\delta+it,f)|^2\right\}; K, \Phi\Big)$
when $$\frac{\log\log K}{\log K}\ll \delta \leq 1/2+\frac{10 \log\log K}{(1-\Upsilon)\log K}.$$
We will follow the method of Ricotta \cite[Appendix A]{ricotta2006real},
which is based on a classical Phragm\'{e}n--Lindel\"{o}f-type convexity principle.

\begin{theorem}\label{thm:A>}
  Let $0 < \Upsilon < 1$  and $M=K^{1-5\vartheta}$,
  with $0<\vartheta<1/100$ being a small constant.
  If $\delta\gg \frac{\log\log K}{\log K}$, then for any $0<a<2(1-\Upsilon)$, we have
  \[
    \frac{1}{\cA(K, \Phi)}\cA\Big(\left\{ |LM(1/2+\delta+it,f)-1|^2\right\}; K, \Phi\Big)
    \; \ll_{B,a} \; (1+|t|)^B M^{-a\delta},
  \]
  for some constant $B>0$ depending only on $\vartheta$.
  If $\frac{\log\log K}{\log K}\ll \delta
  \leq 1/2+\frac{10 \log\log K}{(1-\Upsilon)\log K}$,
  then for any $0<a<2(1-\Upsilon)$, we have
  \[
    \frac{\cA\Big(\left\{ |LM(1/2+\delta+it,f)|^2\right\}; K, \Phi\Big)
}{\cA(K, \Phi)}    \; = \; 1 \; + \; \mathcal{O}_{B,a} \Big( (1+|t|)^B M^{-a\delta} \Big),
  \]
  for some constant $B>0$ depending only on $\vartheta$.
\end{theorem}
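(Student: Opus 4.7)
My plan is a Phragm\'en--Lindel\"of-type convexity argument, in the spirit of Ricotta, applied to the $f$-averaged quantity
$$h(s) \; := \; \frac{\cA\big(\{|LM(s,f) - 1|^2\};\, K, \Phi\big)}{\cA(K,\Phi)}.$$
Since each $g(s,f) := LM(s,f) - 1$ is entire in $s$, the function $|g(s,f)|^2$ is log-subharmonic, and since a positive-weight average of log-subharmonic functions is log-subharmonic (the classical fact that $\log \sum c_j e^{u_j}$ is subharmonic when each $u_j$ is), $h(s)$ is log-subharmonic. The second statement of the theorem follows from the first by expanding $|LM|^2 = 1 + 2\Re(LM-1) + |LM-1|^2$ and averaging; the linear term is controlled either by Cauchy--Schwarz or, for the sharpest exponent, by the same convexity argument applied to the holomorphic function $s \mapsto \cA(\{LM(s,f)-1\}; K, \Phi)$.

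The bound on the right boundary $\Re(s) = 1 + \eta$ rests on a key structural observation about the mollifier: the Dirichlet coefficient of $LM(s,f)-1$ at index $N$ vanishes for every $N \ge 2$ with $\rad(N) \le M^{1-\Upsilon}$, because for such $N$ every divisor $m \mid N$ satisfies $F_{\Upsilon,M}(\rad(m)) = 1$, reducing the coefficient to the identity $\sum_{mn=N}\lambda_f(n)a_f(m) = [N=1]$. Combined with Deligne's bound $|\lambda_f(n)| \leq \tau(n)$, this gives $|LM(s,f) - 1| \ll_\eta M^{-(1-\Upsilon)\eta+\epsilon}$ pointwise in $f$ and $t$, so $h(1+\eta+it) \ll_\eta M^{-2(1-\Upsilon)\eta + \epsilon}$.

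For the left boundary $\Re(s) = 1/2 + \delta_0$ with $\delta_0 := C_1 (\log\log K)/\log K$ and $C_1 \ge 20$ (so that $u_0 := \delta_0 \log K \ge 10$), Lemma \ref{lemma:V-l} combined with \eqref{eqn:A<} gives $\cA(\{|LM|^2\})/\cA(K,\Phi) = \sV(u_0,v) + o(1) = 1 + o(1)$, whence $h(1/2 + \delta_0 + it) = O(1)$. Applying the three-lines theorem for log-subharmonic functions on the strip $\{1/2 + \delta_0 \leq \Re(s) \leq 1 + \eta\}$ then yields
$$h(1/2 + \delta + it) \; \ll \; (1+|t|)^B \cdot M^{-2(1-\Upsilon)\eta \cdot (\delta-\delta_0)/(1/2+\eta-\delta_0) + \epsilon}.$$
Letting $\eta \to \infty$ and using $\delta_0 = o(\delta)$ in the admissible range produces $h(s) \ll (1+|t|)^B M^{-a\delta}$ for any $a < 2(1-\Upsilon)$.

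The main obstacle is ensuring uniformity in $t$: equation \eqref{eqn:A<} was derived only in the restricted $|t|$-ranges $\mathbf{(I)}$, $\mathbf{(II)}$, $\mathbf{(III)}$, so for larger $|t|$ one must extend the second-moment asymptotic of \S\ref{sec:tsm}--\S\ref{sec:msm} (this is permissible since the conclusion allows polynomial $t$-growth). A secondary subtlety is recovering the full exponent $a < 2(1-\Upsilon)$ in the second statement rather than merely $a < 1-\Upsilon$: a direct application of Cauchy--Schwarz to the linear term $\cA(\{LM-1\})$ would cost a factor of two in the exponent, so one must instead apply the convexity argument directly to the holomorphic function $s \mapsto \cA(\{LM(s,f)-1\})$, using Petersson's trace formula to exploit the $f$-cancellation on the right boundary together with the same vanishing-of-coefficients observation.
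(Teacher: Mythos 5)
Your strategy coincides with the paper's: a pointwise bound $LM(s,f)=1+\mathcal O\big(M^{(1-\Upsilon)(1+\varepsilon-\Re s)}\big)$ to the right of $\Re(s)=1$ from the vanishing of the coefficients of $LM-1$ below $M^{1-\Upsilon}$ (lemma \ref{lemma:A-l}), an $(1+|t|)^B$ bound for the averaged mollified second moment just to the right of the critical line (lemma \ref{lemma:A-s}), interpolation via a Phragm\'en--Lindel\"of principle for subharmonic functions (the paper cites Kowalski, which is exactly your log-subharmonicity of the weighted average), and, for the asymptotic $1+\mathcal O(\cdot)$, a first-moment estimate for $LM-1$ built on the Petersson formula plus the same coefficient vanishing, followed again by convexity (lemma \ref{lemma:firstmoment}). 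However, two points in your write-up are genuine gaps rather than routine details.

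First, your left anchor $\delta_0=C_1\log\log K/\log K$ with $C_1\ge 20$, together with the assertion that ``$\delta_0=o(\delta)$ in the admissible range'', is incompatible with the stated range $\delta\gg\log\log K/\log K$: when $\delta\asymp\log\log K/\log K$ (precisely the regime the paper needs in \S 7, where the theorem is invoked for $\delta\ge c_0\log\log K/\log K$ with $c_0<1$) one has $\delta-\delta_0\not\gg\delta$, or even $\delta<\delta_0$, and the interpolated exponent $-2(1-\Upsilon)\eta(\delta-\delta_0)/(1/2+\eta-\delta_0)$ does not reach $-a\delta$ for $a$ near $2(1-\Upsilon)$. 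The paper anchors instead at $\delta=\log\log\log K/\log K$, which is $o(\delta)$ throughout. Moreover lemma \ref{lemma:V-l} requires $u\ge 10$ and $|v|\le 5u$, and \eqref{eqn:A<} was proved only in the regimes $\mathbf{(I)}$--$\mathbf{(III)}$, so neither can furnish the left-boundary bound for general $t$; your proposed fix, ``extend the second-moment asymptotic,'' does not work for $|t|\gg K^{\theta}$, where the error terms of theorem \ref{thm:tsm} (of size $(1+|t|)^4\ell^{1/2}K^{-1+\varepsilon}$) swamp the main terms. The paper's lemma \ref{lemma:A-s} handles large $|t|$ by individual convexity bounds for $L$ and $M$, and only uses theorem \ref{thm:tsm} together with Hough's analysis for $|t|\ll K^{\theta}$.

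Second, the first-moment step is where the real technical work lies and your sketch leaves it open. Placing the right boundary at a fixed abscissa $\Re(s)=1+\eta$ and exploiting Petersson cancellation there yields only a saving of a bounded power of $K$ (the Bessel-average bound $\ll\sqrt n/K^{3}$ holds only for $n\ll K^{2-\vartheta/2}$, and beyond that range one merely gets Weil-type bounds), so the interpolation slope one obtains is bounded independently of $\Upsilon$ and falls short of $2(1-\Upsilon)$ when $\Upsilon$ is small; to keep the full exponent the boundary must sit just to the right of $\Re(s)=1$, where the Dirichlet series of $LM-1$ barely converges and cannot simply be averaged termwise against the Petersson formula. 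The paper resolves this with the smoothing $\frac{1}{2\pi i}\int\Gamma(w)\big(LM(w+s,f)-1\big)X^{w}\,dw$, $X=K^{2-\vartheta}$, which truncates at $n\le X\log^2K$ exactly so that $\Phi\big(4\pi\sqrt n/(cK)\big)=0$ and $\check\Phi\big(cK^{2}/(8\pi\sqrt n)\big)$ is negligible, while the shifted contour is controlled by lemma \ref{lemma:A-s}. With these repairs (left anchor at $\log\log\log K/\log K$, a separate convexity argument for huge $|t|$, and the smoothed first moment) your plan becomes the paper's proof.
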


The proof of  theorem \ref{thm:A>} (to be given at the end of this section)
requires  the following three lemmas.

\begin{lemma}\label{lemma:A-s}
  Let $\vartheta,\Upsilon,M$ be as in theorem \ref{thm:A>}.
  If $\delta = \frac{\log\log\log K}{\log K}$, then we have
  \[
    \frac{\cA\Big(\left\{ |LM(1/2+\delta+it,f)|^2\right\}; K, \Phi\Big) }{\cA(K, \Phi)} \;  \ll_B  \; \big(1+|t|\big)^B,
  \]
  for some constant $B>0$ depending only on $\vartheta$.
\end{lemma}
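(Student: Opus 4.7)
The plan is to apply Theorem~\ref{thm:tsm} to the mollifier expansion and bound each resulting piece uniformly in $K$. Write $\omega = \delta + it$ with $\delta = \log\log\log K/\log K$. Substituting the expression for $|M(1/2+\omega,f)|^2$ and applying Theorem~\ref{thm:tsm} term by term yields
\[
  \cA(\{|LM(1/2+\omega,f)|^2\}) \; = \; \sum_{(\alpha,\beta)=(\pm\omega,\pm\bar\omega)} \Psi(\alpha,\beta)\, \cV_{(\alpha,\beta)}(\omega) \; + \; \cE,
\]
with $\Psi$, $\cV$, $\varepsilon$ exactly as in Section~\ref{sec:msm}. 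The error $\cE$ is obtained by summing the error term in Theorem~\ref{thm:tsm}, namely $(1+|t|)^2 \ell^{-\delta} K^\varepsilon + (1+|t|)^4 \ell^{1/2} K^{-1+\varepsilon}$, against the mollifier coefficients $|x_{d\ell_i}| \ll \tau(d\ell_i)$, using $\ell = \ell_1\ell_2 \leq M^2 = K^{2-10\vartheta} \leq K^{2-4\vartheta}$. This gives $\cE \ll (1+|t|)^4 K^{1-5\vartheta+\varepsilon}$, negligible once divided by $\cA(K,\Phi) \asymp K$.

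Second, I would analyse each of the four main terms. The factor $\Psi(\alpha,\beta)$ carries $\zeta(1+\alpha+\beta)\,\widetilde\Phi(1-\omega-\bar\omega+\alpha+\beta)\,\varepsilon(\alpha,\beta)$. For the diagonal shifts $(\pm\omega,\pm\bar\omega)$ one has $\alpha+\beta = \pm 2\delta$, so $\zeta(1\pm 2\delta) = O(1/\delta)$, while $\widetilde\Phi = O(1)$ and $\varepsilon = O(1)$ uniformly (the factor $(K/4\pi)^{-4\delta}$ is bounded since $\delta \log K = \log\log\log K$). For the cross shifts $(\pm\omega,\mp\bar\omega)$, $\alpha+\beta = \pm 2it$ and the Mellin factor $\widetilde\Phi(1-2\delta \mp 2it)$ decays faster than any polynomial in $|t|$ because $\Phi \in C_0^\infty(\bbR^+)$; consequently the cross contributions are genuinely small for large $|t|$ and $O(K)$ for bounded $|t|$.

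Third, I would bound each $\cV_{(\alpha,\beta)}(\omega)$ via Lemma~\ref{lemma:T}, which supplies a Dirichlet-series representation of the inner sum $S_{(\alpha,\beta)}(r;z)$. In every case, this representation contains an inverse zeta factor that precisely cancels the $\zeta(1\pm 2\delta)$ in $\Psi$: at shift $(\omega,\bar\omega)$, $S_{(\omega,\bar\omega)}(r;\omega)$ carries $\zeta(1+2\omega)^{-1}\zeta(1+2\delta)^{-1}$, and analogous cancellations occur at the other shifts. What remains is a Dirichlet series in $r$, absolutely convergent in $\Re(\omega+\bar\omega) = 2\delta > 0$, bounded by an $O_{\vartheta}(1)$ times a polynomial in $\log(2+|t|)$ after invoking $|x_\ell| \ll \tau(\ell)$. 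Hence $\Psi(\alpha,\beta)\,\cV_{(\alpha,\beta)}(\omega) \ll K (1+|t|)^{B_1}$ for some $B_1 = B_1(\vartheta)$.

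The main obstacle is verifying the exact cancellation of the $\zeta(1\pm 2\delta) \asymp \log K/\log\log\log K$ factors in $\Psi$ against the inverse-zeta factors hidden in $\cV$, so that the combined contribution is genuinely $O(K)$ with no $\log K$-loss. This cancellation is the same mechanism producing the sharp asymptotic $\sV(u,v)$ of Section~\ref{sec:msm}; for Lemma~\ref{lemma:A-s} we only need its qualitative consequence, which follows from Lemma~\ref{lemma:T} together with absolute convergence of the resulting Dirichlet series in the half-plane $\delta > 0$. Once this is established, dividing by $\cA(K,\Phi) \asymp K$ yields the claimed bound $\ll_B (1+|t|)^B$ with implicit constant depending only on $\vartheta$.
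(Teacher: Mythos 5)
There is a genuine gap: your entire argument runs through Theorem \ref{thm:tsm}, but that theorem is only stated (and only proved) under the hypothesis $t\ll K^{\theta}$ with $2\theta\le\vartheta$, whereas Lemma \ref{lemma:A-s} makes no restriction on $t$ at all. The factor $(1+|t|)^{B}$ in the statement is not decoration; it is there precisely because the lemma is later integrated over full vertical lines (in Lemma \ref{lemma:firstmoment} and in the convexity step of Theorem \ref{thm:A>}), so one needs a bound valid for arbitrarily large $|t|$, far beyond the range in which the approximate functional equation and the twisted second moment asymptotic apply. For $|t|\gg K^{\theta}$ your decomposition into $\Psi(\alpha,\beta)\,\cV_{(\alpha,\beta)}(\omega)$ plus a small error is simply not available, so the proposal proves nothing in that range. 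The paper closes this case by a completely different, pointwise argument: convexity bounds for $L(1/2+\delta+it,f)$ together with a trivial bound for $M(1/2+\delta+it,f)$ give a bound polynomial in $K$ and $|t|$, and since $|t|\gg K^{\theta}$ every power of $K$ can be absorbed into $(1+|t|)^{B}$ with $B$ depending only on $\vartheta$ (via $\theta=\vartheta/2$). You need some such separate treatment of large $t$; without it the proof is incomplete.

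A secondary point: in the remaining range the paper itself splits into $|t|\ll\frac{\log\log K}{\log K}$, where the computations of \S\ref{sec:tsm}--\S\ref{sec:msm} (cases $\mathbf{(I)}$--$\mathbf{(III)}$, leading to \eqref{eqn:A<} and \eqref{eqn:V()}) apply, and the intermediate range $\frac{\log\log K}{\log K}\ll|t|\ll K^{\theta}$, where it invokes the argument of Hough's \S 5 after applying Theorem \ref{thm:tsm}. Your third paragraph is a sketch of this intermediate analysis, and the cancellation mechanism you describe (the inverse zeta factors from Lemma \ref{lemma:T} against $\zeta(1+\alpha+\beta)$ in $\Psi$) is the right one in spirit, but the cases treated in \S\ref{sec:msm} only cover $|t|$ up to $O\big(\frac{\log\log K}{\log K}\big)$; to push the bound uniformly up to $|t|\asymp K^{\theta}$ you must verify that the $r$-sums and the shifted factors (e.g.\ $\zeta(1\pm2it)$, $\widetilde\Phi(1-2\delta\mp2it)$, and the $t$-dependence inside $S_{(\alpha,\beta)}(r;z)$) stay under control uniformly in that whole range, which your appeal to ``absolute convergence for $\delta>0$'' does not by itself establish. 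This part is repairable along the lines you indicate, but the missing large-$t$ case is the essential defect.
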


\begin{proof}
  If $|t|\ll \frac{\log\log K}{\log K}$, then by the argument in \S\ref{sec:tsm}  and \S\ref{sec:msm} it follows that
  \[
     \frac{\cA\Big(\left\{ |LM(1/2+\delta+it,f)|^2\right\}; K, \Phi\Big)}{\cA(K, \Phi)} \; \ll \; 1,
  \]
  (see  \eqref{eqn:A<}, \eqref{eqn:V()}).
  So we may assume $|t|\gg \frac{\log\log K}{\log K}$.
  Let $\theta=\vartheta/2$.
  If $|t|\gg K^{\theta}$, then one can use the convexity bounds
  for $L(1/2+\delta+it,f)$ and $M(1/2+\delta+it,f)$ to deduce our claim,
  provided that $B>0$ is large enough.
  Consequently,  we only need to handle the case $\frac{\log\log K}{\log K} \ll |t| \ll K^{\theta}$.
  As in \S\ref{sec:msm}, by theorem \ref{thm:tsm}, we have
  \begin{equation*}
    \begin{split}
      &  \cA\Big(\left\{ |LM(1/2+\delta+it,f)|^2\right\}; K, \Phi\Big)   =  \underset{d}{{\sum}^\flat}\frac{1}{d^{1+2\delta}}
     \hskip-10pt    \underset{\substack{(m_1,n_1)=1\\(m_2,n_2)=1\\(m_1n_1m_2n_2,d)=1}}{{\sum}^\flat}
    \hskip-8pt
         \frac{\mu(m_1)\mu(m_2)F_{\Upsilon,M}(dm_1n_1)F_{\Upsilon,M}(dm_2n_2)}{(m_1n_1^2)^{1/2+\delta+it}(m_2n_2^2)^{1/2+\delta-it}}
          \\
      &\hskip 5pt \cdot  \left[\; \zeta(1+2\delta) \frac{\eta_{it}(m_1m_2)}{(m_1m_2)^{1/2+\delta}}
              \frac{K}{4} \int\limits_{0}^{\infty}\Phi(u)\, du \right. \; +
              \; \zeta(1-2\delta) \frac{\eta_{it}(m_1m_2)}{(m_1m_2)^{1/2-\delta}} \left(\frac{K}{4\pi}\right)^{-4\delta}
              \frac{K}{4} \int\limits_{0}^{\infty} \Phi(u)u^{-4\delta}\, du  \\
      &
      \left.
      \hskip 120pt - 2\Re \left\{\zeta(1-2it)\frac{\eta_\delta(m_1m_2)}{(m_1m_2)^{1/2-it}}
                \left(\frac{K}{4\pi}\right)^{-2\delta-2it}
                \frac{K}{4}\int\limits_{0}^{\infty}\Phi(u)u^{-2\delta-2it} du \right\} \right] \\
      & \hskip 220pt +\; \mathcal O\Big(K^{1-\vartheta+\ve}\Big) \\
      &\hskip 50pt =: \cS_1 + \cS_2 - 2 \Re \cS_3  + O_{\ve}\left(K^{1-\vartheta+\ve}\right).
    \end{split}
  \end{equation*}
  Now, following the same proof as in Hough \cite[\S5]{hough2012zero},
  for $t\ll K^\theta$, we obtain
  \[
     \frac{\cA\Big(\left\{ |LM(1/2+\delta+it,f)|^2\right\}; K, \Phi\Big)}{\cA(K, \Phi)}  \; \ll \;1.
  \]
  This completes the proof of the lemma.
\end{proof}

\begin{lemma}\label{lemma:A-l}
  Let $\vartheta,\Upsilon,M$ be as in theorem \ref{thm:A>}.
  If $\delta>1/2+\varepsilon$, then
  \[
    \frac{\cA\Big(\left\{ |LM(1/2+\delta+it,f)-1|^2\right\}; K, \Phi\Big)}{\cA(K, \Phi)} \;
    \ll_\ve  \; M^{-2(1-\Upsilon)(\delta-(1/2+\ve))}  \]
  for any $\ve>0$.
\end{lemma}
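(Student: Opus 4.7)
For $\delta > 1/2 + \ve$ we have $\Re(s) = 1/2 + \delta > 1 + \ve$, so all the relevant Dirichlet series converge absolutely and uniformly in $f$. My plan is to exploit this by producing a \emph{pointwise} bound for $|LM(1/2+\delta+it,f) - 1|$, uniform in $f$ and $t$; the averaged claim then follows immediately from the positivity of the harmonic weights and the asymptotic (\ref{A(kPhi-asymp}).

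The starting point is the identity $L(s,f) \cdot L(s,f)^{-1} = 1$, which lets me write
\[
  LM(s,f) - 1 \; = \; L(s,f)\bigl(M(s,f) - L(s,f)^{-1}\bigr).
\]
Unpacking the definition (\ref{eqn:M(s)}) and using $F_{\Upsilon,M}(x) = 1$ for $0 \le x \le M^{1-\Upsilon}$, the second factor truncates to a tail:
\[
  M(s,f) - L(s,f)^{-1} \; = \; -\sum_{\rad(n) > M^{1-\Upsilon}} \frac{a_f(n)\bigl(1 - F_{\Upsilon,M}(\rad(n))\bigr)}{n^s}.
\]

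To bound everything pointwise, I would invoke Deligne's bound $|\lambda_f(n)| \le \tau(n)$, giving $|L(s,f)| \le \sum_n \tau(n)/n^{1+\ve} \ll_\ve 1$. For the tail, recall that $a_f$ is supported on cube-free integers and satisfies $|a_f(n)| \le \tau(n)$ (since $a_f(mk^2) = \mu(m)\lambda_f(m)$ for coprime square-free $m,k$); moreover, for cube-free $n$ we have $n \ge \rad(n) > M^{1-\Upsilon}$. Splitting $n^{-(1/2+\delta)} = n^{-(\delta - 1/2 - \ve)} \cdot n^{-(1+\ve)}$ and bounding the first factor by $M^{-(1-\Upsilon)(\delta - 1/2 - \ve)}$ across the summation range yields
\[
  \bigl|M(s,f) - L(s,f)^{-1}\bigr| \; \ll_\ve \; M^{-(1-\Upsilon)(\delta - 1/2 - \ve)}.
\]
Combined, this gives $|LM(s,f) - 1|^2 \ll_\ve M^{-2(1-\Upsilon)(\delta - 1/2 - \ve)}$ pointwise in $f$ and $t$, which when inserted into the averaged ratio produces the lemma.

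I do not anticipate any serious obstacle---the entire argument lives in the absolutely convergent half-plane and is essentially routine bookkeeping. The substantive role of this bound is to furnish the far-right endpoint for the Phragm\'{e}n--Lindel\"{o}f convexity interpolation that drives the proof of theorem \ref{thm:A>}.
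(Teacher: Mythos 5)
Your proof is correct and follows essentially the same route as the paper: the paper's proof consists precisely of the pointwise estimate $LM(s,f)=1+\mathcal O\big(M^{(1-\Upsilon)(1+\ve-\Re(s))}\big)$ for $\Re(s)>1+\ve$, deduced from the shape of the mollifier, after which the averaged bound is immediate from the positivity of the weights. Your factorization $LM(s,f)-1=L(s,f)\bigl(M(s,f)-L(s,f)^{-1}\bigr)$ simply makes explicit the deduction the paper leaves to the reader (and which the remark following the lemma carries out in the equivalent form that the product coefficients $b_f(n)$ vanish for $2\le n\le M^{1-\Upsilon}$ and are $\ll\tau^3(n)$).
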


\begin{proof}
  From the shape of the mollifier \eqref{eqn:M(s)},
  we can deduce that
  \[
    LM(s,f) = 1 \; + \; \mathcal O\left( M^{(1-\Upsilon)(1+\ve-\Re(s))} \right),
  \]
  if $\Re(s)>1+\ve$ for any $\ve>0$.
  The lemma immediately follows from the above estimate.
\end{proof}

\begin{remark}
  If fact, by \eqref{eqn:M(s)}, for $\delta\geq \frac{1}{2}+\frac{9\log\log M^{1-\Upsilon}}{\log M^{1-\Upsilon}}$ and the choice $M=K^{1-5\vartheta}$, as in (\ref{eqn:A<}), we have
  \begin{equation} \label{eq:LMbound}
    LM(1/2+\delta+it,f) = 1 \; + \; \mathcal O\left( \frac{1}{\log K} \right).
 \end{equation}
  Indeed, let $b_f(n):=\sum\limits_{n=\ell m}\lambda_f(m)a_f(\ell)F_{\Upsilon,M}(\rad(\ell))$, then $LM(s,f)=\sum\limits_{n=1}^{\infty}b_f(n)/n^s$ if $\Re(s)>1$.
  Note that we have $b_f(1)=1$, $b_f(n)=0$ if $2\leq n\leq M^{1-\Upsilon}$,
  and $b_f(n)\ll \sum\limits_{n=\ell m}\tau(m)\tau(\ell) \ll \tau^3(n)$ for all $n\in\bbN$.
  So for $\delta\geq \frac{1}{2}+\frac{9\log\log M}{(1-\Upsilon)\log M}$, we have
  \[
    \begin{split}
       LM(1/2+\delta+it,f)-1 & \ll \sum_{n=M^{1-\Upsilon}}^{\infty}\frac{\tau^3(n)}{n^{1/2+\delta}}
        \;  \ll  \sum_{n=M^{1-\Upsilon}}^{\infty}\frac{\tau^3(n)}{n^{1+\frac{9\log\log n}{\log n}}} \\
         & \ll \sum_{n=M^{1-\Upsilon}}^{\infty}\frac{\tau^3(n)}{n(\log n)^{9}} \; \ll \int\limits_{M^{1-\Upsilon}}^{\infty}\frac{dt}{t(\log t)^{9}} \;\sum_{n\leq t}\tau^3(n)\\
         &
         \ll \frac{1}{\log K}.
    \end{split}
  \]
  Consequently, we may take $W_1=1+\frac{10 \log\log K}{(1-\Upsilon)\log K}$ in Selberg's lemma.
\end{remark}

We would like to thank Soundararajan for sending
his unpublished paper with Conrey \cite{conrey3000real}.
The following lemma is strongly based on \cite{conrey3000real}.
\begin{lemma}\label{lemma:firstmoment}
  Let $\vartheta,\Upsilon,M$ be as in theorem \ref{thm:A>}.
  If $\frac{\log\log K}{\log K}\ll \delta \leq 1/2+\frac{10 \log\log K}{(1-\Upsilon)\log K}$,
  then for any $0<a<2(1-\Upsilon)$, we have
  \[
    \frac{\mathcal{A}\Big( \big\{ LM(1/2+\delta+it,f) \big\};K,\Phi \Big)}{\mathcal{A}(K,\Phi)}\;
    = \; 1 + \mathcal{O}_{B,a} \Big( (1+|t|)^B M^{-a\delta} \Big),
  \]
  for some constant $B>0$ depending only on $\vartheta$.
\end{lemma}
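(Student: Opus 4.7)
My plan is to compute the first moment $\mathcal{A}(\{LM(s,f)\};K,\Phi)$ directly at $s = 1/2 + \delta + it$ via the approximate functional equation for $L(s,f)$ and the Petersson trace formula (Lemma \ref{lemma:PTF}), parallel to but simpler than the second-moment computation of Theorem \ref{thm:tsm}. Writing the single-$L$ AFE as
\[
L(s,f) \;=\; \sum_n \frac{\lambda_f(n) V_+(n;s)}{n^s} \;-\; i^k \left(\frac{k}{2\pi}\right)^{1-2s}\sum_n \frac{\lambda_f(n) V_-(n;s)}{n^{1-s}},
\]
and combining with the Dirichlet polynomial expansion $M(s,f) = \sum_\ell x_\ell(s)\lambda_f(\ell)/\ell^s$, I would invoke the Hecke multiplicativity $\lambda_f(n)\lambda_f(\ell) = \sum_{d\mid(n,\ell)}\lambda_f(n\ell/d^2)$ followed by Lemma \ref{lemma:PTF}. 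The analysis then splits into a diagonal contribution (from $n\ell/d^2 = 1$, which forces $n = \ell$) and a Kloosterman off-diagonal.

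The ``$+$'' diagonal evaluates to
\[
\sum_\ell \frac{x_\ell(s)\,V_+(\ell;s)}{\ell^{2s}} \;=\; \sum_m \frac{\mu^2(m)F_{\Upsilon,M}(m)}{m^{2s}}\sum_{\ell\mid m}\mu(\ell)\,V_+(\ell;s).
\]
Since $V_+(\ell;s) = 1 + O_A\big((\ell/k)^A(1+|t|)^C\big)$ for $\ell \ll k$, and since $\sum_{\ell\mid m}\mu(\ell) = \delta_{m=1}$, the main piece collapses to $1$, with negligible error of size $O_A\big((1+|t|)^C K^{-A\vartheta}\big)$ owing to $M\leq K^{1-5\vartheta}$. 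The ``$-$'' diagonal inherits the functional-equation factor $i^k(k/(2\pi))^{1-2s}$, which for $k\equiv 2(4)$ has magnitude $\asymp (1+|t|)^C K^{-2\delta}$; the inner $\ell$-sum is bounded absolutely by $O(\log M)$. Using $K\asymp M^{1/(1-5\vartheta)}$ and the hypothesis $\delta\gg(\log\log K)/\log K$, this piece is $\ll (1+|t|)^C M^{-2\delta/(1-5\vartheta)}$, which is safely within $(1+|t|)^B M^{-a\delta}$ for any $a<2(1-\Upsilon)$, because $2/(1-5\vartheta) > 2(1-\Upsilon)$ for the chosen parameters.

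The off-diagonal Kloosterman-$J_{k-1}$ contribution is averaged over $k\equiv 2(4)$ weighted by $\Phi((k-1)/K)$ using Lemma \ref{lemma:average-over-k}, producing a short-range piece proportional to $\Phi(4\pi\sqrt{n\ell}/(cK))$, an oscillatory $\check\Phi$ piece, and a negligible tail. Exactly as in the treatment of $\cF_1, \cF_2, \cE$ in \S\ref{sec:tsm}, I would apply Voronoi summation (Lemma \ref{lemma:VSF}) to the $n$-sum, then use Weil's bound $|S(n,\ell;c)|\leq \tau(c)(n,\ell,c)^{1/2} c^{1/2}$ to show the short-range piece contributes $\ll (1+|t|)^B\big(M^{1/2}K^{-1+\ve} + K^{-\vartheta}\big)$ under the restriction $\ell\leq M\leq K^{1-5\vartheta}$, while the $\check\Phi$ piece is $O_B(K^{-B})$ by repeated integration by parts. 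All such errors are absorbed into $(1+|t|)^B M^{-a\delta}$.

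The main obstacle is uniform tracking of the $|t|$-dependence through the Gamma-factor ratios in $V_+, V_-$ and through the oscillatory integrals arising from Lemma \ref{lemma:average-over-k}. This is handled by Stirling's formula applied uniformly for $|t|$ in the given range, exactly as in the analysis of the diagonal $\cD$ in \S\ref{sec:tsm}. The accumulated polynomial factor in $|t|$ yields the claimed bound $(1+|t|)^B$ with $B$ depending only on $\vartheta$, completing the proof.
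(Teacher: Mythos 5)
Your route is genuinely different from the paper's. You propose to evaluate the mollified first moment directly at $s=\tfrac12+\delta+it$ via a single-$L$ approximate functional equation, Hecke relations, the Petersson formula and the $k$-average; the main term $1$ then comes from the Möbius collapse in the ``$+$'' diagonal, the dual term is suppressed by the root-number factor of size $K^{-2\delta}$, and the off-diagonal dies under the $k$-average. The paper never touches an AFE here: it writes $LM(s,f)=\sum_n\lambda_f(n)c(n)n^{-s}$ with $c(n)=\sum_{d\mid n}\mu(d)F_{\Upsilon,M}(d)$ (so $c(n)=0$ for $1<n\le M^{1-\Upsilon}$), proves the estimate first at $\Re(s)=1+\frac{10\log\log K}{(1-\Upsilon)\log K}$ by a $\Gamma(w)X^w$ smoothing with $X=K^{2-\vartheta}$, controls the shifted contour by the already-proved mollified second-moment bound of lemma \ref{lemma:A-s} (via Cauchy--Schwarz), bounds the truncated coefficient sum using lemma \ref{lemma:PTF} and lemma \ref{lemma:average-over-k} (where $\Phi(4\pi\sqrt n/(cK))$ vanishes identically and $\check\Phi$ decays), and then descends to the whole range $\frac{\log\log K}{\log K}\ll\delta$ by the same convexity interpolation used for theorem \ref{thm:A>}. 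What your approach buys is a direct asymptotic at every point of the strip without convexity; what it costs is that you must build and control an AFE for $L(s,f)$ (not $|L|^2$) uniformly in $\tfrac12<\Re(s)\le 1+o(1)$ and in $t$, with the $k$-dependent weights $V_\pm$ and the factor $i^k(k/2\pi)^{1-2s}$ folded into the $k$-average; the paper's route reuses existing ingredients and avoids the dual term altogether.

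Three points in your sketch need repair before it is a proof. First, lemma \ref{lemma:VSF} is not applicable: after Petersson the $n$-sum in the first moment carries no $\eta_{it}(n)$ (that arises only from the AFE for $|L|^2$ in \S\ref{sec:tsm}); fortunately you do not need Voronoi at all, since for $n\ell\ll K^{2-\vartheta}$ the term $\Phi\bigl(4\pi\sqrt{n\ell}/(cK)\bigr)$ vanishes identically ($\Phi$ is supported on $[1,2]$) and the $\check\Phi$ term is negligible, exactly as in the paper's treatment of $T_1$. Second, you cannot argue ``exactly as in the analysis of $\cD$ in \S\ref{sec:tsm}'': lemma \ref{lemma:AFE} and theorem \ref{thm:tsm} are stated only for $\delta\le\vartheta$ and $t\ll K^{\theta}$, whereas your lemma requires $\delta$ up to $\tfrac12+\frac{10\log\log K}{(1-\Upsilon)\log K}$ and all $t$; the Stirling analysis and the $k$-average must be redone uniformly in that range, and large $|t|$ (say $|t|\ge K$) should be disposed of separately by convexity bounds, as in the proof of lemma \ref{lemma:A-s}. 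Third, in the dual diagonal each $x_\ell(\tfrac12+\delta+it)$ is only $O\bigl(\min(\log M,\delta^{-1})\bigr)$, so the $\ell$-sum is $O(\log^2K)$, not $O(\log M)$; then $K^{-2\delta}\log^2K\le M^{-a\delta}$ forces $\delta\ge c\,\frac{\log\log K}{\log K}$ with $c$ of moderate size, so your phrase ``safely within'' needs either this explicit constant (consistent with reading ``$\gg$'' in the hypothesis with a suitable implied constant) or a sharper bound on $\sum_\ell x_\ell(s)\ell^{-1}$ exploiting the Möbius structure. With these repairs the strategy does go through, but as written these steps are gaps rather than routine details.
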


\begin{proof}
  In the region $\Re(s)>1$, by \eqref{eqn:M(s)} we may write
  \[
    LM(s,f) = \sum_{n=1}^{\infty} \frac{1}{n^{s}} \left( \sum_{abc^2=n} \lambda_f(a)\lambda_f(b)\mu(b)\mu(bc)^2 F_{\Upsilon,M}(bc) \right).
  \]
  Using the Hecke relations we see that
  \[
    \begin{split}
      & \sum_{abc^2=n} \lambda_f(a)\lambda_f(b)\mu(b)\mu(bc)^2 F_{\Upsilon,M}(bc) = \sum_{abc^2=n} \sum_{d|(a,b)}\lambda_f\left(\frac{ab}{d^2}\right)\mu(b)\mu(bc)^2 F_{\Upsilon,M}(bc),
    \end{split}
  \]
  and setting $a=\alpha d$, $b=\beta d$, and $g=cd$, this becomes
  \[
    \begin{split}
      & \sum_{g^2|n} \lambda_f\left(\frac{n}{g^2}\right) \sum_{\alpha\beta=n/g^2}\mu(\beta g)^2 F_{\Upsilon,M}(\beta g) \sum_{cd=g}\mu(\beta d)   = \lambda_f(n) \sum_{\alpha\beta=n} \mu(\beta) F_{\Upsilon,M}(\beta),
    \end{split}
  \]
  since the terms with $g>1$ are easily seen to disappear. Thus
  \[
    LM(s,f) = \sum_{n=1}^{\infty} \frac{\lambda_f(n)}{n^{s}} c(n),
    \quad \textrm{where}\quad
    c(n) = \sum_{d|n} \mu(d) F_{\Upsilon,M}(d).
  \]
  We have $c(1)=1$; for $1<n\leq M^{1-\Upsilon}$, we have $c(n)= \sum_{d|n}\mu(d)=0$; and for $n>M^{1-\Upsilon}$,  we have $|c(n)|\leq \tau(n)$.

  We will first handle the case $\Re(s)= 1/2+\delta_0$ where
  $\delta_0 = 1/2+\frac{10 \log\log K}{(1-\Upsilon)\log K}$.
  Put $B(s,f):=LM(s,f)-1$. We consider
  \[
    \frac{1}{2\pi i} \int\limits_{3-i\infty}^{3+i\infty}\Gamma(w) B(w+s,f)X^w \, dw,
  \]
  where $X=K^{2-\vartheta}$.
  We shift the line of integration to $\Re(w)=-\delta_0+\delta_1$,
  where $\delta_1=\frac{\log\log\log K}{\log K}$.
  The pole at $w=0$ gives $B(s,f)$, and so we conclude that
  \[
    \begin{split}
      B(s,f) & = \frac{1}{2\pi i} \int\limits_{3-i\infty}^{3+i\infty} \Gamma(w)B(w+s,f)X^w \, dw
       \; - \; \frac{1}{2\pi i} \int\limits_{-\delta_0+\delta_1-i\infty}^{-\delta_0+\delta_1+i\infty} \Gamma(w)B(w+s,f)X^w \, dw \\
      & =: T_1(s,f) - T_2(s,f).
    \end{split}
  \]

  We first estimate the contribution of the $T_2(s,f)$ terms.
  By Cauchy's inequality and lemma \ref{lemma:A-s},
  for some constant $B>0$ we have
  \begin{align*}
      \frac{ \mathcal{A}\Big( \big\{ |T_2(s,f)| \big\};K,\Phi \Big)}{\mathcal{A}(K,\Phi)} & \ll \; X^{-\delta_0+\delta_1} \frac{ \mathcal{A}\left( \left\{ \int\limits_{-\delta_0+\delta_1-i\infty}^{-\delta_0+\delta_1+i\infty} |\Gamma(w)| |B(w+s,f)| \, |dw| \right\};K,\Phi \right)}{\mathcal{A}(K,\Phi)}
     \\
     & \ll \; X^{-\delta_0+\delta_1} \int\limits_{(-\delta_0+\delta_1)} |\Gamma(w)|  \frac{ \mathcal{A}\Big( \big\{ |LM(w+s,f)|\big\};K,\Phi \Big)}{\mathcal{A}(K,\Phi)}\;
      |dw| + X^{-\delta_0+\delta_1}  \\
     &  \ll \; (1+|t|)^B X^{-\delta_0+\delta_1}
     \ll  \;(1+|t|)^B K^{-2(1-2\vartheta)\delta_0}
     \ll \;(1+|t|)^B M^{-2(1-\Upsilon)\delta_0}.
  \end{align*}
  It remains now to estimate the $T_1$ contribution.
  Since $\frac{1}{2\pi i}\int_{(\alpha)}\Gamma(w)(X/n)^w dw = e^{-n/X}$,
  we see that
  \[
    \begin{split}
      T_1(s,f) & = \sum_{n=2}^{\infty} \frac{\lambda_f(n)c(n)}{n^s} e^{-n/X} \\
       & = \sum_{M^{1-\Upsilon}<n\leq X(\log K)^2} \frac{\lambda_f(n)c(n)}{n^s} e^{-n/X} + \mathcal{O}\Big(K^{-B}\Big).
    \end{split}
  \]
  In order to bound $\frac{1}{\mathcal{A}(K,\Phi)} \mathcal{A}\Big( \big\{ T_1(s,f) \big\};K,\Phi \Big)$,
  for $M^{1-\Upsilon}<n\leq X(\log K)^2$ we consider
  \[
    \frac{ \mathcal{A}\Big( \big\{ \lambda_f(n) \big\};K,\Phi \Big)
}{\mathcal{A}(K,\Phi)}  \;  =\; \frac{1}{\mathcal{A}(K,\Phi)} \sum_{k\equiv2(4)}\Phi\left(\frac{k-1}{K}\right)   \sum_{f\in H_k}  \omega_f \cdot \lambda_f(n).
  \]
  By lemma \ref{lemma:PTF} and lemma \ref{lemma:average-over-k}, we arrive at
  \[
    \begin{split}
         & \quad\ \frac{1}{\mathcal{A}(K,\Phi)} \sum_{k\equiv2(4)}\Phi\left(\frac{k-1}{K}\right)
    \left(-2\pi\sum_{c=1}^{\infty}\frac{S(1,n;c)}{c}J_{k-1}\left(\frac{4\pi\sqrt{n}}{c}\right)\right) \\
         & \hskip 60pt = -2\pi \frac{1}{\mathcal{A}(K,\Phi)} \sum_{c=1}^{\infty}\frac{S(1,n;c)}{c} \sum_{k\equiv2(4)}\Phi\left(\frac{k-1}{K}\right) J_{k-1}\left(\frac{4\pi\sqrt{n}}{c}\right)\\
         & \hskip 60pt = -2\pi \frac{1}{\mathcal{A}(K,\Phi)} \sum_{c=1}^{\infty}\frac{S(1,n;c)}{c}
         \Bigg( \frac{1}{4}\Phi\left(\frac{4\pi\sqrt{n}}{cK}\right)
          \\
         & \hskip 110pt   + \frac{\sqrt{c}K}{8\sqrt{\pi}n^{1/4}} \Im\left(e^{-2\pi i/8}e^{i\frac{4\pi\sqrt{n}}{c}}\check{\Phi}\left(\frac{cK^2}{8\pi\sqrt{n}}\right)\right)    + \mathcal{O}\bigg(\frac{\sqrt{n}}{cK^3}\bigg) \Bigg).
    \end{split}
  \]
  If $n\leq X(\log K)^2 \ll K^{2-\vartheta/2}$,
  then $\frac{4\pi\sqrt{n}}{cK}\ll K^{-\vartheta/4}$ and
  $\frac{cK^2}{8\pi\sqrt{n}}\gg cK$. So $\Phi\left(\frac{4\pi\sqrt{n}}{cK}\right) = 0$; and
  $\check{\Phi}\left(\frac{cK^2}{8\pi\sqrt{n}}\right)\ll (cK)^{-B}$
  by repeated integration by parts in the definition of $\check{\Phi}$.
  Thus we see that if $n\leq X(\log K)^2$ then
  \[
    \frac{\mathcal{A}\Big( \big\{ \lambda_f(n) \big\};K,\Phi \Big)
}{\mathcal{A}(K,\Phi)}  \;   \ll \; \frac{\sqrt{n}}{K^4} \; \ll \; K^{-3}.
  \]
  Hence
  \[
    \frac{\mathcal{A}\Big( \big \{ T_1(s,f) \big\};K,\Phi \Big)
}{\mathcal{A}(K,\Phi)}  \;   \ll \; K^{-2},
  \]
  and
  \[
   \frac{\mathcal{A}\Big( \big\{ LM(1/2+\delta_0+it,f)-1 \big\};K,\Phi \Big)}{\mathcal{A}(K,\Phi)} \;
    \ll_{B,a} \; (1+|t|)^B M^{-2(1-\Upsilon)\delta_0}.
  \]
  This completes the proof for $\Re(s)=1/2+\delta_0 = 1+\frac{10 \log\log K}{(1-\Upsilon)\log K}$.
  And the result for $\frac{\log\log K}{\log K}\ll \delta \leq 1/2+\frac{10 \log\log K}{(1-\Upsilon)\log K}$ now follows from lemma \ref{lemma:A-s} and
  the convexity argument.
\end{proof}

\pagebreak

Now we are ready to give the proof of theorem \ref{thm:A>}.
\begin{proof}[Proof of theorem \ref{thm:A>}]
  Let $\delta_2$ be a large fixed constant.
  By lemmas \ref{lemma:A-s} and \ref{lemma:A-l}, and a
  Phragm\'{e}n--Lindel\"{o}f-type convexity principle for
  subharmonic functions which can be found in Kowalski
  \cite[Lemma 25]{kowalski1998rank}, it follows that
  \[
  \frac{\cA\Big(\left\{ |LM(1/2+\delta+it,f)-1|^2\right\}; K, \Phi\Big)}{\cA(K, \Phi)} \;
    \ll_{B,\ve}  \; \big(1+|t|\big)^B M^{\alpha(\delta)},
  \]
  where $\alpha(\delta)$ is the linear function satisfying
  \[
    \alpha(\delta_2)=-2(1-\Upsilon)\Big(\delta_2-(1/2+\ve)\Big), \quad
    \textrm{and} \quad
    \alpha\Big(\frac{\log\log\log K}{\log K}\Big)=0.
  \]
  This leads to
  \[
    \begin{split}
      &\frac{\cA\Big(\left\{ |LM(1/2+\delta+it,f)-1|^2\right\}; K, \Phi\Big)}{\cA(K, \Phi)} \; \ll_{B,\ve} \;
      \big(1+|t|\big)^B M^{-\frac{2(1-\Upsilon)(\delta_2-(1/2+\ve))}{\delta_2-(\log\log\log K)/(\log K)}\big(\delta-\frac{\log\log\log K}{\log K}\big)},
    \end{split}
  \]
  for $(\log\log\log K)/(\log K) \leq \delta \leq \delta_2$.

  Now choose $\ve$ small enough and $\delta_2$ large enough.
  Combining this with lemma \ref{lemma:firstmoment},
  the proof of theorem \ref{thm:A>} immediately follows.
\end{proof}

\section{Proof of theorem \ref{thm:rz}}\label{sec:thmrz}

In this section, we will prove that for at least $60\%$
(counted with weight $\Phi$ for the sum over $k$ and
with harmonic weight for the sum over forms)
of the odd modular L-functions we have $L(\sigma,f)>0$
for $\sigma\in(1/2,1]$. 

We apply lemma \ref{lemma:selberg} with the choices
$$
H=\frac{S}{\log K}, \qquad W_0 = \frac{1}{2}-\frac{R}{\log K},\qquad
W_1=1+\frac{10 \log\log K}{(1-\Upsilon)\log K},\qquad \phi(s) = LM(s, f),
$$
where $R$ and $S$ are fixed positive parameters which will be chosen later.
It follows that
\begin{equation}\label{eqn:sumtoI}
  4S \sum_{\substack{\beta\geq\frac{1}{2}-\frac{R}{\log K}\\0\leq\gamma\leq \frac{2S}{3\log K} \\ L(\beta+i\gamma,f)=0}}
  \cos\left(\frac{\pi \,\gamma\, \log K}{2S}   \right)
        \sinh\left(\frac{\pi \big(R+(\beta-1/2)\log K\big)}{2S}\right)
 \; \leq \; I_1(f)+I_2(f)+I_3(f),
\end{equation}
where
\begin{equation}\label{eqn:I(f)}
  \begin{split}
     I_1(f) \; & := \int\limits_{-S}^{S} \cos\left(\frac{\pi t}{2S}\right)
        \log\left|LM\left(\frac{1}{2}-\frac{R}{\log K}+i\frac{t}{\log K},f\right)\right|dt,  \\
     I_2(f) & \; := \int\limits_{-R}^{(W_1-1/2)\log K} \sinh\left(\frac{\pi(u+R)}{2S}\right)
        \log\left|LM\left(\frac{1}{2}+\frac{u}{\log K}+i\frac{S}{\log K},f\right)\right|^2 du, \\
     I_3(f) & \; := -\Re \int\limits_{-S}^{S} \cos\left(\pi\frac{(W_1-1/2)\log K -R+it}{2iS}\right)
        \log LM\left(W_1+i\frac{t}{\log K},f\right) dt.
  \end{split}
\end{equation}

Now, in the sum over zeros on the left hand side of (\ref{eqn:sumtoI}),
the weight $\cos\left(\frac{\pi\gamma}{2H}\right)$
can be replaced by $1$.
Indeed, if $\gamma=0$, we have $\cos(0)=1$;
and if there is a zero $\rho=\beta+i\gamma$ of $L(s,f)$ with
$\beta\geq\frac{1}{2}-\frac{R}{\log K}$ and
$0<\gamma\leq \frac{2S}{3\log K}$,
then we know that $\bar\rho$ is also a zero of $L(s,f)$,
and the contribution of these two zeros is
$\geq \left[\cos\left(\frac{\pi\gamma}{2H}\right)+\cos\left(\frac{-\pi\gamma}{2H}\right)\right]
\sinh\left(\frac{\pi (R+(\beta-1/2)\log K)}{2S}\right)
\geq \sinh\left(\frac{\pi (R+(\beta-1/2)\log K)}{2S}\right)$.
It follows that
\begin{equation}\label{eqn:sumtoI2}
  4S \sum_{\substack{\beta\geq\frac{1}{2}-\frac{R}{\log K}\\0\leq\gamma\leq \frac{2S}{3\log K} \\ L(\beta+i\gamma,f)=0}}
        \sinh\left(\frac{\pi \big(R+(\beta-1/2)\log K\big)}{2S}\right)
 \; \leq \; I_1(f)+I_2(f)+I_3(f),
\end{equation}

\begin{claim}\label{claim}
  We have
  \begin{equation*}
    I_1(f)+I_2(f)+I_3(f) \; \geq \; \begin{cases}
    4S\sinh\left(\frac{\pi R}{2S}\right), & \textrm{for all $f\in H_k,$}\\
    &
    \\
    12S\sinh\left(\frac{\pi R}{2S}\right), &
    \begin{matrix} & \textrm{if $L(s,f)$ has a zero $\rho=\beta+i\gamma$}\\  &\textrm{with $\beta\in (1/2,1]$ and $|\gamma|\leq \frac{2S}{3\log K}$.}\end{matrix} \end{cases}
  \end{equation*}
\end{claim}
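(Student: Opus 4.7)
The plan is to combine \eqref{eqn:sumtoI2} — which already bounds $I_1(f)+I_2(f)+I_3(f)$ below by $4S$ times a sum of $\sinh$-weights over zeros of $LM(s,f)$ in the restricted box — with an explicit lower bound on that sum coming from zeros of $L(s,f)$ itself. I would first observe that the hypotheses of lemma \ref{lemma:selberg} are in force for $\phi(s) = LM(s,f)$: by \eqref{eq:LMbound}, $LM(s,f) = 1 + \mathcal O(1/\log K)$ on $\Re(s) \ge W_1$, so $LM$ does not vanish on this half-plane, and \eqref{eqn:sumtoI2} is then obtained from Selberg's formula by restricting the zero sum to $0 \le \gamma \le 2S/(3\log K)$ and pairing each such zero with its complex conjugate, where the combined cosine weight $2\cos(\pi\gamma/(2H)) \ge 2\cos(\pi/3) = 1$ justifies replacing the Selberg weight by $1$.

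For the first line of the claim I would use only the fact that every zero of $L(s,f)$ is a zero of $LM(s,f)$ (the mollifier $M(s,f)$ being entire). Since $f \in \cH_-(K)$ the sign in the functional equation is $-1$, forcing $L(1/2,f)$ to vanish to odd order; hence $\rho=1/2$ contributes $\sinh(\pi R/(2S))$ to the left of \eqref{eqn:sumtoI2}, which gives $I_1+I_2+I_3 \ge 4S\sinh(\pi R/(2S))$ uniformly in $f$. For the second line, suppose $L(s,f)$ has a further zero $\rho = \beta + i\gamma$ with $\beta \in (1/2,1]$ and $|\gamma|\le 2S/(3\log K)$. By reality of the Dirichlet coefficients I may assume $\gamma \ge 0$ after replacing $\rho$ by $\bar\rho$, so that $\rho$ itself lies in the restricted region and contributes $\sinh\bigl(\pi(R+(\beta-1/2)\log K)/(2S)\bigr)$. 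The functional equation $\Lambda(s,f)=-\Lambda(1-s,f)$ then produces a companion zero $1-\bar\rho = (1-\beta)+i\gamma$ with the same imaginary part, and I would split into two cases according to whether this companion lies in the box.

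If $\beta \le 1/2 + R/\log K$, then $1-\bar\rho$ is still in $\cB$ (and in the restricted $\gamma$-range), contributing $\sinh(\pi(R-(\beta-1/2)\log K)/(2S))$; the identity $\sinh(a+b)+\sinh(a-b) = 2\sinh(a)\cosh(b)$ with $a = \pi R/(2S)$ shows the pair $\{\rho,1-\bar\rho\}$ contributes at least $2\sinh(\pi R/(2S))$. If instead $\beta > 1/2 + R/\log K$, the companion escapes the box, but now $(\beta-1/2)\log K > R$ forces $\sinh(\pi(R+(\beta-1/2)\log K)/(2S)) > \sinh(\pi R/S) = 2\sinh(\pi R/(2S))\cosh(\pi R/(2S)) \ge 2\sinh(\pi R/(2S))$ on its own. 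Either way, $\rho$ contributes an additional $2\sinh(\pi R/(2S))$ beyond the baseline coming from $\rho=1/2$, giving a total of $3\sinh(\pi R/(2S))$ and hence $I_1+I_2+I_3 \ge 12S\sinh(\pi R/(2S))$ after multiplying by $4S$.

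The only delicate point — the main obstacle I would flag — is the dichotomy on $\beta$ in the second case, where one must balance whether to exploit the $\sinh$-addition identity (when both $\rho$ and $1-\bar\rho$ sit inside the box) or simply the strict monotonicity of $\sinh$ (when $\beta - 1/2 > R/\log K$ so the companion leaves the box). Once this bookkeeping is set up, everything else reduces to the already-verified inequality \eqref{eqn:sumtoI2} together with the trivial observation that zeros of $L(s,f)$ are zeros of $LM(s,f)$.
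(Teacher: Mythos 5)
Your proposal is correct and follows essentially the same route as the paper: the baseline $4S\sinh\left(\frac{\pi R}{2S}\right)$ from the forced central zero (odd sign of the functional equation), then for an extra zero the same dichotomy — if $\beta-\frac12\le \frac{R}{\log K}$ pair it with its functional-equation companion and use that $\sinh(x+y)+\sinh(x-y)\ge 2\sinh(x)$, while if $\beta-\frac12>\frac{R}{\log K}$ the single weight already exceeds $\sinh\left(\frac{\pi R}{S}\right)\ge 2\sinh\left(\frac{\pi R}{2S}\right)$. Your added remarks (zeros of $L$ are zeros of $LM$, and the cosine-removal via conjugate pairing) are exactly the justifications the paper uses in deriving \eqref{eqn:sumtoI2}, so nothing is missing.
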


\begin{proof}[Proof of Claim]
  Since we only consider $f \in H_k$ with $k\equiv2\pmod{4}$,
  it follows from remark \ref{remark1} that $L(1/2, f) = 0$ for all these $f.$ Hence we always know that the left hand side of \eqref{eqn:sumtoI2}
  exceeds $4S\sinh\left(\frac{\pi R}{2S}\right)$.

  Now suppose that $L(\beta+i\gamma,f)=0$ for some $\beta\in (1/2,1]$
  and $|\gamma|\leq \frac{2S}{3\log K}$.
  If $L(s,f)$ has a zero with $\beta>\frac{1}{2}+\frac{R}{\log K}$,
  then the contribution from this zero to the left hand side of \eqref{eqn:sumtoI2} would be
  $\geq 4S\sinh\left(\frac{\pi R}{S}\right)\geq 8S\sinh\left(\frac{\pi R}{2S}\right)$,
  since $\sinh(2x)\geq 2\sinh(x)$ for $x\geq0$.
  Let us now assume that $L(s,f)$ has a zero with $\beta=\frac{1}{2}+\frac{\xi}{\log K}$
  for some $0<\xi\leq R$.  The functional equation then implies that there is also a zero with $\beta=\frac{1}{2}-\frac{\xi}{\log K}$,
  and together they contribute
  $4S\left(\sinh\left(\frac{\pi (R+\xi}{2S}\right)
  +\sinh\left(\frac{\pi (R-\xi}{2S}\right)\right)
  \geq 8S\sinh\left(\frac{\pi R}{2S}\right)$
  to the left hand side of \eqref{eqn:sumtoI2}. This is because
  the minimum value of $\sinh(x+y)+\sinh(x-y)$
  for $0\leq y\leq x$ is attained at $y=0$.
  Then together with the contribution from the zero at $s=1/2$, this proves the claim.
\end{proof}

Let us now define
$$\cN_0(K, \Phi) \; := \sum_{k\equiv2(4)}\Phi\left(\frac{k-1}{K}\right)
\hskip -30pt
\underset{\text{for some} \;\beta\in (1/2,1], \; |\gamma|\leq \frac{2S}{3\log K} } {\underset{L(\beta+i\gamma,f)\, = \, 0\;  } {\sum_{f\in H_k}}}  \hskip -30pt\omega_f.$$
By claim \ref{claim}, we have
\[
  \frac{1}{2}\cA(K, \Phi) + \cN_{0}(K, \Phi)
\;  \leq \;\frac{
        \cA\Big(\big\{I_1(f)+I_2(f)+I_3(f)\big\}; K, \Phi\Big)}{8S\sinh\left(\frac{\pi R}{2S}\right)}.
\]
That is
\[
  \frac{\cN_{0}(K, \Phi)}{\cA(K, \Phi)}
 \; \leq \;
        \frac{\cA\Big(\big\{I_1(f)+I_2(f)+I_3(f)\big\}; K, \Phi\Big)}{8S\sinh\left(\frac{\pi R}{2S}\right)\, \cA(K, \Phi)}
        -\frac{1}{2}.
\]
Since the weighted geometric mean is less than the weighted
arithmetic mean, we have that
\[
  \frac{\cA\Big(\left\{\log |LM(1/2+\delta+it,f)|^2\right\}; K, \Phi\Big)}{\cA(K, \Phi)} \;
  \leq \; \log\left( \frac{\cA\Big(\left\{|LM(1/2+\delta+it,f)|^2\right\}; K, \Phi \Big)}{\cA(K, \Phi)}\right).
\]
It follows that
\begin{equation}\label{eqn:N0toJ}
  \begin{split}
    & \frac{\cN_{0}(K, \Phi)}{\cA(K, \Phi)} \; \leq \; \frac{ J_1(K;\Phi) + J_2(K;\Phi)}{8S\sinh\left(\frac{\pi R}{2S}\right)}
        \;  +\;
        \frac{ \cA\Big(\big\{I_3(f)\big\}; \Phi, K\Big)}{8S\sinh\left(\frac{\pi R}{2S} \,\right) \cA(K, \Phi)}
      \;  - \;\frac{1}{2},
  \end{split}
\end{equation}
where
\begin{equation*}
  \begin{split}
     J_1(K;\Phi) & := \int\limits_{0}^{S}  \cos\left(\frac{\pi t}{2S}\right)
        \log \left( \frac{
        \cA\left(\left\{\Big|LM\Big(\frac{1}{2}-\frac{R}{\log K}+i\frac{t}{\log K},f\Big)\Big|^2\right\}; K, \Phi\right) }{\cA(K, \Phi)}\right) dt,  \\
     J_2(K;\Phi) & := \int\limits_{-R}^{(W_1-\frac12)\log K} \sinh\left(\frac{\pi(u+R)}{2S}\right)  \log \left( \frac{ \cA\left(\left\{\Big|LM\Big(\frac{1}{2}+\frac{u}{\log K}+i\frac{S}{\log K},f\Big)\Big|^2\right\}; K, \Phi\right)}{\cA(K, \Phi)}
 \right) du.
  \end{split}
\end{equation*}

From now on, we shall assume
\begin{equation}\label{eqn:S>}
  S \; \geq \; \frac{\pi}{4(1-\Upsilon)(1-20\vartheta)}.
\end{equation}
We first consider with $\frac{\cA\big(\{I_3(f)\}; K, \Phi\big)}{\cA(K, \Phi)}$.
By theorem \ref{thm:A>}, lemma \ref{lemma:firstmoment},
and \eqref{eqn:I(f)}, we have
\[
  \frac{\cA\left(\big\{I_3(f)\big\}; K, \Phi\right)}{8S\sinh\left(\frac{\pi R}{2S}\right) \cA(K, \Phi)}
       \; = \;  \mathcal{O}_{\ve}(K^{-\ve}),
\]
and
\begin{equation}\label{eqn:N0toJ12}
  \frac{\cN_{0}(K;\Phi)}{\cA(K, \Phi)}
\;  \leq \; \frac{J_1(K;\Phi) + J_2(K;\Phi)}{8S\sinh\left(\frac{\pi R}{2S}\right)}
       \;  - \; \frac{1}{2} \; + \; \mathcal{O}_{\ve}(K^{-\ve}).
\end{equation}
For $J_2(K;\Phi)$, we set
\[
  J_2(K;\Phi) = J_{21}(K;\Phi) + J_{22}(K;\Phi),
\]
where
\[
  \begin{split}
     J_{21}(K;\Phi) & := \int\limits_{-R}^{c_0\log\log K} \sinh\left(\frac{\pi(u+R)}{2S}\right)  \log \left( \frac{
        \cA\left(\left\{\Big|LM\Big(\frac{1}{2}+\frac{u}{\log K}+i\frac{S}{\log K},f\Big)\Big|^2\right\}; K, \Phi\right)}{\cA(K, \Phi)} \right) du, \\
     J_{22}(K;\Phi) & := \int\limits_{c_0\log\log K}^{(W_1-1/2)\log K} \sinh\left(\frac{\pi(u+R)}{2S}\right)  \log \left( \frac{      \cA\left(\left\{\Big|LM\Big(\frac{1}{2}+\frac{u}{\log K}+i\frac{S}{\log K},f\Big)\Big|^2\right\}; K, \Phi\right)}{\cA(K, \Phi)}
 \right) du.
  \end{split}
\]
By theorem \ref{thm:A>}, with the choices $a= \frac{2(1-\Upsilon)(1-10\vartheta)}{1-5\vartheta}$
and $S=\frac{\pi}{4(1-\Upsilon)(1-20\vartheta)}$, we have
\[
  \begin{split}
    J_{22}(K;\Phi)
    & \ll \int\limits_{c_0\log\log K}^{\log K} e^{\frac{\pi u}{2S}-2(1-\Upsilon)(1-10\vartheta)u} du \\
    & \ll \int\limits_{c_0\log\log K}^{\log K} e^{-20\vartheta(1-\Upsilon)u} du
    \;  \ll_{\vartheta,\Upsilon} \; (\log K)^{-20\vartheta(1-\Upsilon)c_0}.
  \end{split}
\]

Now by \eqref{eqn:A<}, and taking $c_0=\frac{S}{\pi}$,
we see that
\begin{equation}\label{eqn:N0toV}
  \begin{split}
    \frac{\cN_{0}(K, \Phi)}{\cA(K, \Phi)}
    & \; \leq \; \frac{1}{8S\sinh\left(\frac{\pi R}{2S}\right)}
        \left( \int\limits_{0}^{S}  \cos\left(\frac{\pi t}{2S}\right)
        \log\Big( \sV(-R,t) \Big) \;dt \right.\\
    & \hskip 75pt     \left. +
        \int\limits_{0}^{\infty} \sinh\left(\frac{\pi u}{2S}\right)
        \log\Big( \sV(u-R,S) \Big) \;du \right)
     -\frac{1}{2} + \mathcal{O}_{c}\Big((\log K)^{-c}\Big),
  \end{split}
\end{equation}
for some constant $0 < c \leq 20\vartheta(1-\Upsilon)c_0$.
Choose $c = 20\vartheta(1-\Upsilon)c_0$ and let $P(x)$, $Q(x)$, $\vartheta$, $\Upsilon$, $R$, and $S$ be as in lemma \ref{lemma:V-l}.
Then by a computer calculation of the integrals on the left hand side of (\ref{eqn:N0toV}), we get
\begin{equation}\label{eqn:N0}
  \frac{\cN_{0}(K, \Phi)}{\cA(K, \Phi)} \;\leq\; 0.3613,
\end{equation}
when $K$ is sufficiently large.
\vskip 10pt

\section{Proof of theorem \ref{thm:superpositivity2} } \label{sec:thmsp}

In this section the variables $\vartheta,   \Upsilon, R, S,$
and the polynomials $P(x)$, $Q(x)$, will be fixed as in lemma \ref{lemma:V-l}.
Let $J:=[C\log\log K]$, where $C$ is a large constant and $[x]$ means the largest integer less than $x$.  Set
$d := 2S/3$
and define the regions:
$$
  \mathcal{R}_j :=
  \begin{cases} \Big\{\beta+i\gamma \; \Big| \; \beta\geq \frac{1}{2}+\frac{jd}{\log K}, \;\;  |\gamma|\leq \frac{(j+1)d}{\log K}\Big\}, & \qquad  \mathrm{if} \; 1\leq j \leq J-1,\\
  &\\
 \Big\{\beta+i\gamma \; \Big| \; \beta\geq \frac{1}{2}+\frac{Jd}{\log K}, \;\;  |\gamma|\leq 1\Big\}, & \qquad \text{if} \; j = J,
   \end{cases}
$$
and the zero counting sum
$$\cN_j(K, \Phi) \; := \sum_{k\equiv2(4)}\Phi\left(\frac{k-1}{K}\right)
 \underset{ \text{one zero in}  \,\mathcal R_{j}} {\underset{L(s,f)\; \text{has at least} } {\sum_{f\in H_k}}}  \hskip -10pt\omega_f, \qquad \mathrm{if}\; 1\leq j \leq J.$$

For $1\leq j\leq J-1$, let $\cB_j$ be the rectangular box with vertices
$W_{0,j}\pm H_j$ and $W_1\pm H_j$,
where
\begin{equation}\label{eqn:W0j&Hj}
  W_{0,j} := \frac{1}{2}+\frac{jd/2}{\log K},\quad
  H_j := \frac{3(j+1)d/2}{\log K}.
\end{equation}
By lemma \ref{lemma:selberg} and the argument in \S\ref{sec:thmrz},
we have
\[
  \begin{split}
    &  4H_j\sinh\left(\frac{\pi j}{6(j+1)}\right) \frac{\cN_{j}(K, \Phi)}{\cA(K, \Phi)} \; \leq \; \frac{
    \cA\left( \left\{4H_j\sum_{\substack{\beta+i\gamma\in\cB_j \\ L(\beta+i\gamma,f)=0}}
    \cos\left(\frac{\pi \gamma}{2H_j}\right) \sinh\left(\frac{\pi(\beta-W_{0,j})}{2H_j}\right)\right\}; \; K, \; \Phi\right)}{\cA(K, \Phi)}
     \\
     &
     \\
    & \hskip 20pt\leq \int\limits_{0}^{H_j} \cos\left(\frac{\pi t}{2H_j}\right)
        \log \left( \frac{
        \cA\left(\left\{\left|LM\left(W_{0,j}+it,f\right)\right|^2\right\}; K, \Phi\right)}{\cA(K, \Phi)} \right) dt  \\
    & \hskip 50pt + \int\limits_{W_{0,j}}^{W_1} \sinh\left(\frac{\pi(u-W_{0,j})}{2H_j}\right)
        \log \left( \frac{
        \cA\left(\left\{\left|LM\left(u+iH_j,f\right)\right|^2\right\}; K, \Phi\right)}{\cA(K, \Phi)} \right) du
   \; + \; \mathcal O_{\varepsilon}\left( K^{-\varepsilon}  \right).
  \end{split}
\]
Consequently
\begin{equation}\label{eqn:Nj<}
  \begin{split}
    &  \frac{\cN_{j}(K, \Phi)}{\cA(K, \Phi)}   \; \leq \; \frac{1}{6(j+1)d\sinh\left(\frac{\pi j}{6(j+1)}\right)}
        \left[\int\limits_{0}^{\frac{3}{2}(j+1)d} \cos\left(\frac{\pi t}{3(j+1)d}\right)
        \log \Big( \sV(jd/2,t) \Big) dt  \right.\\
    & \hskip 60pt \left.
        + \int\limits_{0}^{\infty} \sinh\left(\frac{\pi u}{3(j+1)d}\right)
        \log \Big( \sV(u+jd/2,3(j+1)d/2) \Big) du \phantom{\int\limits_{0}^{\frac{3}{2}(j+1)d}} \hskip -27pt\right]
        + \; \mathcal{O}_{c}\Big((\log K)^{-c}\Big).
  \end{split}
\end{equation}
Let $c = 20\vartheta(1-\Upsilon)c_0$ as in (\ref{eqn:N0toV}).  By  a computer calculation of the integrals on the right hand side of (\ref{eqn:Nj<}), we obtained the following bounds:
\begin{equation}\label{eqn:Nj<-s}
  \begin{split}
   \frac{\cN_{1}(K, \Phi)}{\cA(K, \Phi)} \;  \leq \; 0.19441, \qquad  \frac{\cN_{2}(K, \Phi)}{\cA(K, \Phi)} & \; \leq \; 0.03891, \qquad   \frac{\cN_{3}(K, \Phi)}{\cA(K, \Phi)} \;  \leq \; 0.00989, \\
  \sum_{j=4}^{13} \;\frac{\cN_{j}(K, \Phi)}{\cA(K, \Phi)}  & \; \leq  \; 0.00439,
  \end{split}
\end{equation}
provided $K$ is sufficiently large.

To obtain similar bounds for $14\leq j \leq  J-1$, we will, instead, use lemma \ref{lemma:V-l}.
Now together with the fact $x\leq \sinh(x)\leq e^{x}$ for all $x>0$,
we have
\begin{equation*}
  \begin{split}
      \frac{\cN_{j}(K, \Phi)}{\cA(K,\Phi)} \; & \leq \; \frac{\frac{3}{2}(j+1)d}{6(j+1)d\sinh\left(\frac{\pi j}{6(j+1)}\right)}  e^{-jd/4}     \;    + \; \frac{ \int\limits_{0}^{\infty} e^{\frac{\pi u}{3(j+1)d}}  e^{-(u+jd/2)/2} \;du }{6(j+1)d\sinh\left(\frac{\pi j}{6(j+1)}\right)} \; + \;    \mathcal{O}_{c}\Big((\log K)^{-c}\Big)
       \\
    & \leq \; \frac{3(j+1)}{2\pi j}
        e^{-jd/4}
       \; + \; \frac{1}{\pi jd} \int\limits_{0}^{\infty} e^{\frac{\pi u}{3(j+1)d}}
        e^{-u/2-jd/4}\; du  \; + \;    \mathcal{O}_{c}\Big((\log K)^{-c}\Big) \\
    & \leq \;\frac{45}{28\pi}e^{-dj/4} + \frac{4}{\pi dj}e^{-dj/4}  \; + \;    \mathcal{O}_{c}\Big((\log K)^{-c}\Big)\\
    & \leq \; \frac{3}{5}e^{-dj/4}  \; + \;    \mathcal{O}_{c}\Big((\log K)^{-c}\Big).
  \end{split}
\end{equation*}

Hence we have
\begin{equation}\label{eqn:Nj<-l}
  \sum_{14\leq j\leq J-1} \frac{\cN_{j}(K, \Phi)}{\cA(K, \Phi)}
  \leq \frac{3}{5}\sum_{j=14}^{\infty} e^{-dj/4} \; + \; \mathcal O_c\left(\frac{C\log\log K}{(\log K)^c} \right)
 \; \leq \; 0.01212
\end{equation}
for suitable choice of $C$.
Note that by Hough \cite[Theorem 1.1]{hough2012zero}, we have
\begin{equation}\label{eqn:N_J}
  \frac{\cN_{J}(K, \Phi)}{\cA(K, \Phi)} \ll (\log K)^{-c},
\end{equation}
 when $C$ is large enough.
The choice of $C$, depends on
the absolute constant $\theta$ in Hough \cite[Theorem 1.1]{hough2012zero} but is not really important for our results.
Thus by \eqref{eqn:N0}, \eqref{eqn:Nj<-s}, \eqref{eqn:Nj<-l}, and \eqref{eqn:N_J},
we get
\[
  \frac{\cN(K, \Phi)}{\cA(K, \Phi)}
\;  \leq \; \sum_{j=0}^{J} \frac{\cN_{j}(K, \Phi)}{\cA(K, \Phi)}
\;  \leq \; 0.63,
\]
when $K, C$ is large enough.

It immediately follows from the above that
\begin{equation}\label{bound1} \cM(K, \Phi)=\cA(K, \Phi)-\cN(K, \Phi)
\; \geq \; 0.27 \cdot\cA(K, \Phi).
\end{equation}
By \cite{hoffstein1994coefficients}
and Goldfeld--Hoffstein--Lieman \cite{goldfeld1994effective} one may obtain the upper bound:
\begin{equation} \label{bound2}
\Phi\left(\frac{k-1}{K}\right) \omega_f =   \Phi\left(\frac{k-1}{K}\right) \frac{\zeta(2)}{(k-1)/12}
    \cdot\frac{1}{L(1,\operatorname{sym}^2 f)}
  \ll \frac{\log K}{K}.
\end{equation}
On the other hand, we have already shown the asymptotic formula (see (\ref{A(kPhi-asymp}))
\begin{equation} \label{bound3}
\cA(K, \Phi)  \; = \; \frac{K}{4}\widetilde{\Phi}(1) \; + \; \mathcal  O\left(K^{-B}\right).
\end{equation}
It now easily follows from definition \ref{defSums} and (\ref{bound1}), (\ref{bound2}), (\ref{bound3}), that the number of $f\in H_k$ with $k\equiv2\ (\mod4)$ such that
$L(s,f)$ has no zero $\rho=\beta+i\gamma$ with
$\beta\in (1/2,1]$ and $|\gamma|\leq \beta-1/2$ will be
$\gg K^2/\log K$.

\section{Acknowledgments}
Bingrong Huang would like to thank Jianya Liu and Wei Zhang  for their valuable advice and constant encouragement and Peter Sarnak for helpful conversations.
He also thanks the Department of Mathematics at Columbia University for its hospitality. The authors would also like to thank Zeev Rudnick for helpful comments.


\end{document}